\newtheorem{theorem}{Theorem}[section]
\newtheorem{lemma}[theorem]{Lemma}
\newtheorem{cor}[theorem]{Corollary}
\theoremstyle{definition}
\newtheorem{definition}[theorem]{Definition}
\theoremstyle{remark}
\newtheorem{remark}[theorem]{Remark}
\numberwithin{equation}{section}
\newcommand{\NN}{{\mathbb N}}
\newcommand{\RR}{{\mathbb R}}
\newcommand{\eps}{\varepsilon}
\newcommand{\out}[1]{\ }
\DeclareMathOperator{\capa}{cap}
\DeclareMathOperator{\supp}{supp}
\let\cal=\mathcal
\renewcommand{\phi}{\varphi}
\begin{document}

\title[Symmetric function kernels and sweeping of measures]{Symmetric function kernels and sweeping of measures}

\author{Bent Fuglede}
\address{Department of Mathematical Sciences
\\Universitetsparken 5
\\2100 Copenhagen
\\Danmark}
 \email{fuglede@math.ku.dk}

\subjclass[2010]{
\footnote{2010 Mathematics Subject Classification}{31C15, 31D05.\\
\phantom{\quad}\textit{Keywords}. Function kernel, energy capacity, balayage/sweeping, equilibrium.}
}

\begin{abstract} This is a potential theoretic study of balayage (sweeping) of a positive Radon measure $\omega$ on a locally compact (Hausdorff) space $X$ onto a closed, or more generally a quasiclosed set $A\subset X$ (that is, a set which can be approximated in outer capacity by closed sets). The setting is that of potentials with respect to a suitable symmetric function kernel $G:X\times X\to[0,+\infty]$. Following Choquet (1959) we consider energy capacity, not as a set function, but as a functional, acting on positive numerical functions on $X$. The finiteness of the upper capacity of the function $1_AG\omega$ is sufficient for the possibility of the sweeping in question ($1_A$ denoting the indicator function of $A$ and $G\omega$ the $G$-potential of $\omega$).
\end{abstract}

\maketitle

\section{Introduction}\label{sec1}
The thesis of Frostman \cite{Fr} marks the beginning of potential theory with respect to other kernels than the Newtonian or Greenian ones. He considered the kernels $|x-y|^{\alpha-n}$ of order $0<\alpha<n$ on $\RR^n$, studied particularly by his teacher M.\ Riesz as published in \cite{MR} (1938). Potential theory with respect to these kernels culminated with the book of Landkof \cite{L}. Potential theory with respect to much more general kernels began around 1940 with many contributions notably from the Japanese school, first by Kametani, Ugaheri, Kunugui, and Ninomiya, and from the French school around Brelot, H. Cartan, Choquet, and Deny. A comprehensive study of the various `principles' in potential theory and their interrelations for rather general kernels was made by Ohtsuka \cite{O} (1961). Fundamental results by Cartan \cite{Ca1} (1945) on Hilbert space aspects of classical potential theory were generalized by the present author \cite{Fu1} (1960) to so-called {\textit{consistent\/}} function kernels on an arbitrary locally compact (Hausdorff) space $X$ (that is, on $X\times X$). A continuation of that, suitable for the study of balayage (`sweeping') of a (positive Radon) measure of finite energy on quasiclosed sets (much as in Cartan \cite{Ca2} for balayage on closed sets), was worked out around 1970 (in the setting of consistent function kernels), and some underlying general aspects were treated in \cite{Fu3} and \cite{Fu4} (1971), but the actual potential theoretic aspects were left unpublished until now. Here a study of the energy capacity as a functional and of balayage on quasiclosed sets with respect to a consistent function kernel is presented. This will be applied in ongoing joint work with Zorii \cite{FZ}. I thank Natalia Zorii for encouraging me to publish the present part of my old material from 1970 (now including balayage of measures of infinite energy, using Choquet \cite{Ch2}) and for going through the entire manuscript thoroughly and constructively.

In the present study we consider a kernel
$G$ on a nonvoid locally compact space $X$, that is, a lower semicontinuous (l.s.c.) function $G:X\times X\to[0,+\infty]$. In the absence of other indication, $G$ shall be symmetric and strictly positive on the diagonal. Further requirements will be listed on the way. We denote by $\cal M^+=\cal M^+(X)$ the cone of all (positive Radon) measures on $X$. The {\textit{potential\/}} $G\mu$ of a measure $\mu\in\cal M^+$ is defined by $G\mu(x)=\int G(x,y)\,d\mu(y)$. Our main purpose is to extend the Gauss variational method, passing first from measures, in particular equilibrium measures, on a compact set $K$ to measures on a {\textit{quasicompact}} set, that is, a set $A\subset X$ such that
\begin{eqnarray}\label{1.1}
\inf\,\bigl\{c^*(A\setminus K):\ K\text{ compact, }K\subset X\bigr\}=0
\end{eqnarray} where $c^*$ denotes the outer energy capacity as a set function defined on all subsets of $X$, cf.\ e.g.\ \cite[Section 2.5]{Fu1}. The main step is to pass from (indicator functions of) compact subsets of $X$ to functions of class $\cal H^*_0$, that is, functions $f:X\to[0,+\infty]$ such that
\[
\inf\,\bigl\{c^*(f-h):\ h\in\cal H_0,\ h\le f\bigr\}=0,
\]
where $\cal H_0$ denotes the cone of all finite upper semicontinuous (u.s.c.) functions $h\ge0$ of compact support in $X$, and where $c^*$ is the extension of the above outer energy capacity of sets to a functional, likewise denoted by $c^*$ and termed the {\textit{upper energy capacity}}, defined on the cone $\cal F^+$ of all functions $f:X\to[0,+\infty]$, cf.\ \cite{Fu4} and eqs.\ (\ref{2.1}) through (\ref{2.3}) below. While this latter step is irrelevant for the study of equilibrium, it becomes very useful for the study in Section 4 (with more assumptions on $G$) of balayage of a measure $\omega$ on $X$ onto a suitable set $A\subset X$ (in the first place: on a quasiclosed set $A$, that is a set like a quasicompact set, but with `compact' replaced by `closed' in (\ref{1.1})). Briefly speaking, this usefulness is because the upper energy capacity $c^*(1_AG\omega)$ (supposed finite) governs the game of balayage.

In Section 2 we study the capacitary measures for a function $f$ from the above class $\cal H^*_0$. These capacitary measures are those measures $\mu$ on $X$ which maximize the Gauss integral $\int(2f-G\nu)d\nu$ as $\nu$ ranges over the cone $\cal E^+$ of (positive) measures of finite energy (cf.\ Theorem \ref{thm2.1}). Their potentials, in particular, possess the following properties:
\begin{eqnarray}\label{1.3}
G\mu\ge f\text{\ q.e.\ on\ $X$ \ and \ $G\mu= f$\ $\mu$-a.e.},
\end{eqnarray}
`q.e.' (quasi-everywhere) meaning: everywhere off some set of zero outer energy capacity (compare with Remark~\ref{key}).

Section 3 deals with a dual notion of energy capacity and corresponding upper and lower dual capacity, denoted $\gamma^*$ and $\gamma_*$ respectively, the former being defined for $f\in\cal F^+$ by
\[
\gamma^*(f)=\inf\,\Bigl\{\Bigl(\int G\lambda\,d\lambda\Bigr)^{1/2}:\ \lambda\in\cal E^+,\ G\lambda\ge f\text{\ q.e.}\Bigr\},
\]
interpreted as $+\infty$ if there is no such measure $\lambda$. This upper dual capacity is a particular case of an `encombrement' in the sense of Choquet \cite{Ch6}. It is easily shown that $\gamma^*(G\omega)=(\int G\omega\,d\omega)^{1/2}$ for any $\omega\in\cal E^+$ (Lemma \ref{lemma3.3}).

We now assume that the kernel $G$ is \textit{consistent\/} and \textit{positive (semi)definite\/}  (Definitions~\ref{def3.5} and \ref{def3.4}). Thus the potential $G\lambda$ of every (positive) measure $\lambda$ of finite energy $\int G\lambda\,d\lambda$ is of class $\cal H^*_0$ and
 every function $f\in\cal F^+$ with $\gamma^*(f)<+\infty$ is majorized q.e.\ by a function of class $\cal H_0^*$ (Lemma \ref{lemma3.8}). Consistency of a strictly positive definite kernel amounts to the cone $\cal E^+$ of (positive) measures of finite energy being {\it complete} in the strong topology on $\cal E^+$ induced by the energy norm topology on the prehilbert space $\cal E$ of all signed measures of finite energy, and such that this strong topology on $\cal E^+$  is finer than the induced vague (that is, weak*) topology, \cite[Section 3.3]{Fu1}. (As observed by H. Cartan \cite{Ca1}, the prehilbert space $\cal E$ is incomplete in the case of the Newtonian kernel on $\RR^n$.)

Under the stated hypotheses, capacity and dual capacity are the same: in particular, $c^*=\gamma^*$ (Corollary \ref{cor3.13}).
For every $f\in\cal H^*_0$ there exist measures $\mu\in\cal E^+$ satisfying (\ref{1.3}) above. Any such measure is called a {\textit{capacitary measure}} for $f$, and the class ${\rm{M}}(f)$ of all these measures $\mu$ is a (nonvoid)
convex subset of $\cal E^+$, compact in the vague topology on $X$ (Theorem \ref{thm3.9}). Of course, if $G$ is strictly positive definite then there is only one capacitary measure for $f$. In any case the upper capacity $c^*$ is sequentially order continuous from below:
$$
c^*(f)=\sup_n\,c^*(f_n)
$$
for any increasing sequence of functions $f_n\in\cal F^+$ (Corollary \ref{cor3.13}). It follows by Choquet's capacitability theorem \cite{Ch5} that every $\cal H_0$-Suslin function $f\in\cal F^+$ is $c$-capacitable (Theorem \ref{thm3.15}). Section 3 closes with a discussion of {\textit{upper capacitary measures}} for an arbitrary function $f\in\cal F^+$ with $c^*(f)<+\infty$, by reducing this to the previous case $f\in\cal H^*_0$.

In Section 4, the above is applied (under additional hypotheses on $X$ and $G$) to establish balayage of any (Radon) measure $\omega\in\cal M^+$ on a quasiclosed set $A$ such that $1_AG\omega\in\cal H^*_0$. This requirement is fulfilled (Lemma \ref{lemma4.0}) if $c^*(1_AG\omega)<+\infty$ and if $G\omega$ is quasicontinuous in the sense that there exists for any $\eps>0$ an open set $V$ with $c(V)<\eps$ such that the restriction of $f$ to $\complement V:=X\setminus V$ is continuous (in the extended sense).

For simplicity of statements we now assume that the consistent and positive definite kernel $G$ is {\textit{strictly positive definite}}, so that $f:=1_AG\omega\in\cal H^*_0$ has just one capacitary measure, which we denote by $\omega^A$.  (Similarly, for a quasicompact set $A$, $f:=1_A$ has just one capacitary  measure (in particular: equilibrium measure in the presence of the maximum principle), denoted $\mu_A$.) If $\omega$ has finite energy then $G\omega\in\cal H^*_0$ by definition of consistency, and hence $G\omega$
is indeed quasicontinuous (\cite[Theorem 2.6]{Fu4}). We must further assume that $G$ satisfies the {\textit{domination principle}} in order to pass from $G\mu\le G\omega$ $\mu$-a.e.\ in (\ref{1.3}) to $G\mu\le G\omega$ everywhere on $X$, where now $\mu$ is $\omega^A$. Then $\omega^A$ has the desired properties of the sweeping of $\omega$ on $A$.

It is desirable to remove the above hypothesis that $\omega\in\cal E^+$. Returning to any  (Radon) measure $\omega$ we establish the quasicontinuity of $G\omega$
by using a result by Choquet \cite{Ch2} about quasicontinuity of $G\omega$ for any Radon measure $\omega$ on $X$ (supposed compact) when $G$ is l.s.c.\ and satisfies the continuity principle of Evans and Vasilesco. As noted in \cite{Ch2} the compactness assumption is easily removed. However, quasicontinuity is understood in \cite{Ch2} with respect to the outer
capacity, $G\text{-}\!\capa^*$ (see  Eq.~(\ref{G-capa}) below), which is smaller than the outer energy capacity (assuming $G$ symmetric), but the two are equal if $G$ satisfies the maximum principle. In this way we obtain our result on balayage of a measure $\omega$ on a quasiclosed set $A$ with $c^*(1_AG\omega)<+\infty$ (Theorem \ref{thm4.5}).

Replacing $G\omega$ by the constant function $1$ (and the hypothesis of the domination principle by that of the maximum principle) this leads to a corresponding result about the equilibrium measure on a quasiclosed set $A\subset X$ of finite upper capacity $c^*(A)$ (Remark \ref{remark4.7}). As emphasized by Natalia Zorii (personal communication), the requirement that $c^*(A)$ be finite is not necessary for the existence of an equilibrium measure on $A$, cf.\ \cite[p.\ 277]{Ca2}, \cite[p.\ 74]{Fu5}, \cite{Z}. This requirement, however, is necessary (and sufficient) for the existence of an equilibrium measure of finite energy, and our method is confined to equilibrium measures (and swept measures $\omega^A$) of finite energy.

Dropping now
the hypothesis that $A$ be quasiclosed in these two results on sweeping, resp.\ equilibrium, we obtain corresponding results on {\textit{upper sweeping}}, resp.\ {\textit{outer equilibrium}}   (Theorem \ref{cor4.8}, resp.\ Corollary \ref{cor4.11}), simply by passing from $A$ to any {\textit{quasiclosure}} $A^*$  of $A$ and noting that $c(1_{A^*}G\omega)=c^*(1_AG\omega)<+\infty$ and hence $1_{A^*}G\omega\in\cal H^*_0$, resp.\ $c(A^*)=c^*(A)<+\infty$, whence $A^*$ is quasicompact. A quasiclosure of a set $A\subset X$ is defined as a quasiclosed set $A^*$ containing $A$ which is minimal with these properties (up to a set of zero outer capacity). For details, see the beginning of Section \ref{OuterBal}.

Dropping instead the requirement that $G$ in Theorem~\ref{thm4.5} (resp.\ Remark \ref{remark4.7}) satisfy the domination principle (resp.\ the maximum principle), we loose the inequality $G\omega_A\leq G\omega$, resp.\ $G\mu_A\leq 1$, everywhere on $X$ and therefore only have `pseudobalayage'  instead of balayage (Theorem \ref{thm4.4}), resp.\ capacitary measures instead of equilibrium measures (Theorem \ref{thm4.6}). These two theorems are valid, e.g., for the Riesz kernels $|x-y|^{\alpha-n}$ on $\RR^n$ of any order $0<\alpha<n$ whereas we have actual balayage and equilibrium for $0<\alpha\leq 2$ only.

\section{An extension of the Gauss variational method}\label{sec2}

\subsection{Definitions and preliminaries.} Let $X$ be a non-void locally compact space. A \textit{(positive function) kernel} on $X$ is a lower semicontinuous (l.s.c.) function $G:X\times X\to[0,+\infty]$. In the absence of other indication we shall furthermore assume throughout (except in Section \ref{relations})
that $G$ is \textit{symmetric}, that is, $G(x,y)=G(y,x)$ for $x,y\in X$, and that $G$ is strictly positive on the diagonal, that is, $G(x,x)>0$ for $x\in X$.

For any (positive Radon) measure $\mu\in\cal M^+=\cal M^+(X)$ define the
 {\textit{potential}}
$G\mu:X\to[0,+\infty]$ and the
{\textit{energy}}
$\int G\mu\,d\mu\in[0,+\infty]$
of $\mu$ by
$$
G\mu(x)=\int G(x,y)\,d\mu(y), \quad \int G\mu\,d\mu=\iint G(x,y)\,d\mu(x)\,d\mu(y).
$$
For brevity we may write $\|\mu\|$ for $(\int G\mu\,d\mu)^{1/2}$. Define
$$
\cal E^+=\bigl\{\mu\in\cal M^+:\ \|\mu\|<+\infty\bigr\},\quad\cal E^+_1=\bigl\{\mu\in\cal M^+:\ \|\mu\|\le1\bigr\} .
$$

 \begin{remark}\label{remark2.0} The requirement that $G$ be strictly positive on the diagonal is equivalent to $\cal E^+_1$ being vaguely bounded (or equivalently: vaguely compact), see \cite[Lemma 2.5.1]{Fu1}. It follows that $\|\mu\|>0$ for any non-zero $\mu\in\cal M^+$. To see this, take $z\in\supp\mu$ and an open neighborhood $V$ of $z$ such that $G>0$ on $V\times V$ (possible since $G(z,z)>0$ and $G$ is l.s.c.). Then $\|\mu\|^2\ge\int_V\int_VG(x,y)\,d\mu(x)\,d\mu(y)>0$.
\end{remark}

The {\textit{mutual energy}}
of two measures $\mu,\nu\in\cal M^+$ is defined as
$$
\int G\mu\,d\nu=\int G\nu\,d\mu=\iint G(x,y)\,d\mu(x)\,d\nu(y)\in[0,+\infty].
$$
The potential, the energy, and the mutual energy are l.s.c.\ functions of the respective  variables, $\cal M^+$ being given the vague topology, \cite[Lemma 2.2.1]{Fu1}.

Let $\cal F^+=\cal F^+(X)$ denote the convex cone of all functions $X\to[0,+\infty]$. Let $\cal G$ denote the convex subcone of $\cal F^+$ consisting of all l.s.c.\ functions $X\to[0,+\infty]$. Let $\cal H_0$ denote the convex subcone of $\cal F^+$ consisting of all finite and upper semicontinuous (u.s.c.) functions $X\to[0,+\infty[\,$ of compact support in $X$. Then  $\cal C^+_0:=\cal G\cap\cal H_0$ is the further convex subcone of $\cal F^+$ formed by all finite continuous positive functions of compact support in $X$. Referring to \cite[Section 5]{Fu4} we shall consider the enveloping capacity for the set $\cal E^+_1$ of measures. This capacity $c$ is called the \textit{energy capacity} with respect to the kernel $G$. Explicitly, the functional $c:\cal C^+_0\to[0,+\infty]$ is defined by
\begin{eqnarray}
c(\phi)=\max_{\mu\in\cal E^+_1}\,\mu(\phi)
=\max\,\Bigl\{\int\phi\,d\mu:\ \mu\in
\cal E^+,\ \int G\mu\,d\mu\le1\Bigr\},\label{2.1}
\end{eqnarray}
$\phi\in\cal C^+_0$. If we want to specify the kernel $G$ in question we may write $c_G$ for $c$. As a functional on $\cal C^+_0$, the energy capacity $c$ is finite valued, increasing, and sublinear (that is, subadditive and positive homogeneous), \cite[Section 5.4]{Fu4},
applicable because $G$ is strictly positive on the diagonal (cf.\ Remark \ref{remark2.0}).

The extensions of this functional to functions of class $\cal H_0$ or $\cal G$ are defined \cite[Section 4.2]{Fu4} by
\begin{align*}
c(h)&=\inf\,\bigl\{c(\phi):\ \phi\in\cal C^+_0,\ \phi\ge h\bigr\},\quad h\in\cal H_0,\\
c(g)&=\sup\,\bigl\{c(\phi):\ \phi\in\cal C^+_0,\ \phi\le g\bigr\},\quad g\in\cal G.
\end{align*}
The expression (\ref{2.1}) for $c(\phi)$ with $\phi\in\cal C^+_0$ remains valid with $\phi$ replaced by $h\in\cal H_0$, \cite[Theorem 5.5]{Fu4}. See also Theorem \ref{thm2.0} below.

Define the \textit{lower} and the \textit{upper (energy) capacity} of a function $f\in\cal F^+$ by
\begin{align}
c_*(f)&=\sup\,\bigl\{c(h):\ h\in\cal H_0,\ h\le f\bigr\}=\sup\,\Bigl\{\int_*f\,d\mu:\ \mu\in\cal E^+_1\Bigr\},\label{2.2}\\
c^*(f)&=\inf\,\bigl\{c(g):\ g\in\cal G,\ g\ge f\bigr\}\ge\sup\,\Bigl\{\int^*f\,d\mu:\ \mu\in\cal E^+_1\Bigr\},\label{2.3}
\end{align}
cf.\ \cite[Section 5.5]{Fu4} for the latter relation in (\ref{2.2}) and (\ref{2.3}). In the latter equality (\ref{2.2}) it suffices to admit measures $\mu\in\cal E^+_1$ of compact support contained in $\{f>0\}$, \cite[Section 5.5, Remark]{Fu4}. These two functionals $c_*$ and $c^*$ on $\cal F^+$ are increasing and positive homogeneous; and $c^*$ is \textit{countably subadditive}, \cite[p.\ 21]{Fu4}. Note that $c^*$ is an \textit{upper capacity} also in the sense of \cite[Definition 3.1]{Fu4}. Two functions $f_1,f_2\in\cal F^+$ are said to be \textit{$c^*$-equivalent} if $c^*(|f_1-f_2|)=0$, or equivalently if $f_1=f_2$ q.e., cf.\ \cite[p.~6, Corollary 2]{Fu4}. (It is understood that $|f_1-f_2|=+\infty$ at points where $f_1=f_2=+\infty$.)

We say that $f$ is $c$\textit{-capacitable} if $c^*(f)=c_*(f)$, in which case we may write $c(f)$ in place of $c^*(f)$ or $c_*(f)$, and briefly term $c(f)$ the \textit{(energy) capacity} of $f$. Define $\cal H_0^*$ and $\cal G^*$ as the closures of $\cal H_0$ and $\cal G$, respectively, in the $c^*$-metric topology, that is, the topology on $\cal F^+$ defined by the pseudometric (\'ecart) $(f_1,f_2)\mapsto c^*(|f_1-f_2|)$. (We always define $(+\infty)-(+\infty)=+\infty$.) Equivalently, for $f\in\cal F^+$,
\begin{eqnarray*}f\in\cal H_0^*
&\Longleftrightarrow&\inf\,\bigl\{c^*(f-h):\ h\in\cal H_0,\ h\le f\bigr\}=0,\\
f\in\cal G^*
&\Longleftrightarrow&\inf\,\bigl\{c^*(g-f):\ g\in\cal G,\ g\ge f\bigr\}=0,
\end{eqnarray*}
by \cite[Section 3.2]{Fu4}.

Every function of class $\cal H_0$ or $\cal G$ is $c$-capacitable, and so is therefore every function of class $\cal H^*_0$ or $\cal G^*$, see \cite[Lemma 4.6]{Fu4}. Furthermore, $c(h)<+\infty$ for any $h\in\cal H_0$ and hence for any $h\in\cal H^*_0$; this is because any $h\in\cal H_0$ is majorized by some $\phi\in\cal C_0^+$, and hence $c(h)\le c(\phi)<+\infty$. Also, every function of class $\cal G^*$ is measurable and every function of class $\cal H^*_0$ is integrable with respect to any $\mu\in\cal E^+$, \cite[Corollary 6.1]{Fu4}.

The following further extension of (\ref{2.1}) from functions of class $\cal H_0$, now to functions of class  $\cal H^*_0$, is crucial for the present study. It is a particular case of \cite[Theorem 6.3]{Fu4}.

\begin{theorem}\label{thm2.0} For any function $f\in\cal H^*_0$
$$
c(f)=\max_{\nu\in\cal E^+_1}\,\nu(f)=\max\,\Bigl\{\int f\,d\nu:\ \nu\in\cal E^+,\ \int G\nu\,d\nu\le1\Bigr\}.
$$
\end{theorem}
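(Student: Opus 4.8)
The plan is to identify $\sup_{\nu\in\cal E^+_1}\int f\,d\nu$ with the capacity $c(f)$ by invoking the $c$-capacitability of functions of class $\cal H^*_0$, and then to produce an actual maximizer by a vague compactness argument, the point being to approximate $f$ from below by functions of class $\cal H_0$ so as to exploit the vague upper semicontinuity of integration against u.s.c.\ compactly supported functions.

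First I would settle the equality of numbers. Since $f\in\cal H^*_0$ is $c$-capacitable \cite[Lemma 4.6]{Fu4}, one has $c(f)=c_*(f)$. By the second equality in (\ref{2.2}), $c_*(f)=\sup_{\mu\in\cal E^+_1}\int_*f\,d\mu$; and since every function of class $\cal H^*_0$ is integrable with respect to every $\mu\in\cal E^+$ \cite[Corollary 6.1]{Fu4}, the lower integral $\int_*f\,d\mu$ coincides with $\int f\,d\mu$ (which is moreover finite, so that $c(f)<+\infty$). Hence
\[
c(f)=\sup_{\nu\in\cal E^+_1}\int f\,d\nu .
\]
It remains only to show that this supremum is attained, so that it is in fact the maximum asserted (the two expressions for the maximum in the statement being the same quantity written in two ways).

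For attainment I would start from a maximizing sequence $\mu_j\in\cal E^+_1$ with $\int f\,d\mu_j\to s:=c(f)$. By Remark \ref{remark2.0} the set $\cal E^+_1$ is vaguely compact, so some subnet $(\mu_{j_\alpha})$ converges vaguely to a measure $\nu$, which lies in $\cal E^+_1$ since the latter is vaguely closed; along this subnet $\int f\,d\mu_{j_\alpha}\to s$. The inequality $\int f\,d\nu\le s$ is clear, so the whole issue is the reverse inequality $\int f\,d\nu\ge s$. Here lies the one real difficulty: $f$ is merely of class $\cal H^*_0$, not u.s.c., so $\nu\mapsto\int f\,d\nu$ need not be vaguely u.s.c.\ and the limit cannot be taken in $\int f\,d\mu_{j_\alpha}$ directly. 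To get around this, fix $\eps>0$ and, using $f\in\cal H^*_0$, choose $h\in\cal H_0$ with $h\le f$ and $c^*(f-h)<\eps$. For every $\mu\in\cal E^+_1$ the defect is controlled uniformly, namely $\int(f-h)\,d\mu\le c^*(f-h)<\eps$ by (\ref{2.3}) applied to the integrable function $f-h$, so that $\int h\,d\mu\ge\int f\,d\mu-\eps$. As $h$ is u.s.c.\ with compact support, $\nu\mapsto\int h\,d\nu$ is vaguely u.s.c., and therefore
\[
\int f\,d\nu\ge\int h\,d\nu\ge\limsup_\alpha\int h\,d\mu_{j_\alpha}\ge\lim_\alpha\int f\,d\mu_{j_\alpha}-\eps=s-\eps .
\]
Letting $\eps\to0$ gives $\int f\,d\nu\ge s$, whence $\int f\,d\nu=s=c(f)$ and $\nu$ realizes the maximum.

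I expect the main obstacle to be precisely this last passage to the limit. Vague compactness of $\cal E^+_1$ is available, but it cooperates only with u.s.c.\ integrands of compact support, whereas the target $f$ sits in the larger class $\cal H^*_0$; the decisive leverage is that membership of $f$ in $\cal H^*_0$ furnishes, for each $\eps$, a function $h\in\cal H_0$ below $f$ whose defect $\int(f-h)\,d\mu$ is bounded by $c^*(f-h)<\eps$ uniformly over $\mu\in\cal E^+_1$. A minor technical caveat is the recourse to subnets rather than subsequences, since $X$ is not assumed metrizable; this is harmless, as a real-valued net inherits the limit of the sequence it refines.
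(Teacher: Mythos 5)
Your proof is correct, but note that the paper does not actually prove Theorem \ref{thm2.0}: it simply cites it as a particular case of \cite[Theorem 6.3]{Fu4}. What you have written is in effect a self-contained reconstruction of the relevant special case of that cited result. Your first step --- $c(f)=\sup_{\nu\in\cal E^+_1}\nu(f)$ via $c$-capacitability of $\cal H^*_0$-functions, the latter equality in (\ref{2.2}), and $\cal E^+$-integrability from \cite[Corollary 6.1]{Fu4} --- uses exactly the facts the paper quotes in Section 2. Your attainment argument, namely the uniform control $\int(f-h)\,d\mu\le c^*(f-h)<\eps$ over all $\mu\in\cal E^+_1$ from (\ref{2.3}) combined with the vague upper semicontinuity of $\mu\mapsto\mu(h)$ for $h\in\cal H_0$, amounts to showing that $\nu\mapsto\nu(f)$ is a uniform limit of vaguely u.s.c.\ functionals on $\cal E^+_1$, hence itself vaguely u.s.c.\ there; that is precisely the content of \cite[Theorem 6.2]{Fu4}, which the paper invokes later (in the proof of Theorem \ref{thm2.1}, to get vague compactness of ${\rm{M}}_1(f)$). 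So your route coincides in substance with the machinery behind the paper's citation; what it buys is independence from \cite{Fu4} at this point, at the cost of redoing in-line what that monograph packages as its Theorems 6.2--6.3. One small slip: your parenthetical claim that finiteness of each $\int f\,d\mu$ yields $c(f)<+\infty$ is a non sequitur, since a supremum of finite numbers can be infinite; finiteness instead follows from your own $\eps$-device, $\int f\,d\mu\le\int h\,d\mu+\eps\le c(h)+\eps$ uniformly over $\mu\in\cal E^+_1$, or from the paper's remark that $c(h)<+\infty$ for every $h\in\cal H^*_0$ (and in any case your limit argument would force $\int f\,d\nu=+\infty$ if $c(f)$ were infinite, contradicting integrability). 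With that trivial repair the argument is complete.
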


If $c(f)>0$ then every maximizing measure $\nu$ for $c(f)$ in Theorem \ref{thm2.0} clearly has energy 1. The expression for $c(f)$ in the theorem therefore remains valid if $\cal E^+_1$ is replaced by its boundary $\partial\cal E^+_1=\{\nu\in\cal E^+:\ \|\nu\|=1\}$.

Identifying a set $A\subset X$ with its indicator function $1_A\in\cal F^+$ we write $c(K)=c(1_K)$ for compact $K$. Then $c(K)^2$ is the usual energy capacity of $K$, cf.\ e.g.\ \cite[Eq.\ (1) and note 1, p.\ 162]{Fu1}. Denoting by $c_*(A)^2$ and $c^*(A)^2$ the usual inner and outer energy capacity of an arbitrary set $A$ it follows that $c_*(A)=c_*(1_A)$ and $c^*(A)=c^*(1_A)$. We have the usual notions nearly everywhere (n.e.) and quasi-everywhere (n.e.), that is, everywhere except in some set $E$ with $c_*(E)=0$, resp.\ $c^*(E)=0$. According to \cite[Lemma 2.4]{Fu4} the sets $A$ such that $1_A\in\cal H^*_0$ are the quasicompact sets, see (\ref{1.1}).

\subsection{Gauss variation for a function $f\in\cal H^*_0$.}
The key to the rest of this paper is the extension of the Gauss variational method from the classical case dealing with compact sets $K\subset X$ (see e.g.\ \cite[Theorem 2.5]{Fu1}), to dealing with functions of class $\cal H_0$ or even $\cal H_0^*$, in particular with (indicator functions of) quasicompact sets.

The \textit{Gauss integral} associated with a given function $f\in\cal H^*_0$ is defined as the following function of a variable measure $\nu\in\cal E^+$:
$$
\int(2f-G\nu)\,d\nu=2\int f\,d\nu-\|\nu\|^2,
$$
which is finite and only depends on the $c^*$-equivalence class of $f\in\cal H^*_0$ because $\nu$ does not charge the sets of zero outer capacity.

We say that a measure $\mu\in\cal M^+$ is \textit{carried by\/} (or \textit{concentrated on\/}) a set $A\subset X$ if $\complement A$ is locally $\mu$-negligible. Let $\cal E^+(A)$ consist of all $\mu\in\cal E^+$ that are carried by~$A$.

\begin{theorem}\label{thm2.1} For any function $f\in\cal H^*_0$ we have
\begin{align}c(f)^2
&=\max_{\nu\in\cal E^+}\,\int(2f-G\nu)\,d\nu=\max_{\nu\in\cal E^+}\,\biggl\{\int2f\,d\nu-\|\nu\|^2\biggr\}<+\infty,\label{2.4}\\
c(f)^2
&=\max\,\biggl\{\int f\,d\nu:\ \nu\in\cal E^+,\ G\nu\le f{\text{ $\nu$-a.e.}}\biggr\}.
\label{2.5}
\end{align}
The maximizing measures are the same in the two cases\/ {\rm(\ref{2.4})} and\/ {\rm(\ref{2.5})}. They form a (nonvoid) vaguely compact subclass ${\rm{M}}(f)$ of $\cal E^+$ which only depends on the $c^*$-equivalence class of $f$. Each measure $\mu\in{\rm{M}}(f)$ is carried by $\{f>0\}$ and has the following properties:
\begin{itemize}
\item[\rm(a)] $G\mu\ge f{\text{\ q.e.}}$,
\item[\rm(b)] $G\mu=f\text{\ $\mu$-a.e.}$,
\item[\rm(c)] $\int f\,d\mu=\|\mu\|^2=c(f)^2$.
\end{itemize}
\end{theorem}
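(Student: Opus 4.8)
The plan is to treat (\ref{2.4}) as a concave maximization of the Gauss integral $F(\nu):=2\int f\,d\nu-\|\nu\|^2$ over the cone $\mathcal{E}^+$, and then to read off (a)--(c), the form (\ref{2.5}), and the structure of $M(f)$ from the associated variational inequalities. Note first that $F$ is finite on $\mathcal{E}^+$ since every $f\in\mathcal{H}_0^*$ is integrable with respect to each measure of finite energy. To evaluate the supremum I parametrise $\nu=t\sigma$ with $\|\sigma\|=1$, $t\ge0$, so that $\sup_{t\ge0}\bigl(2t\int f\,d\sigma-t^2\bigr)=\bigl(\int f\,d\sigma\bigr)^2$; hence $\sup_{\nu}F(\nu)=\bigl(\sup_{\|\sigma\|=1}\int f\,d\sigma\bigr)^2=c(f)^2$ by Theorem \ref{thm2.0} together with the remark that its maximum is attained on $\partial\mathcal{E}_1^+$. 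For existence of a maximiser I use a maximising sequence $(\nu_n)$: from $\int f\,d\nu\le c(f)\|\nu\|$ (again Theorem \ref{thm2.0}) one gets $F(\nu)\le 2c(f)\|\nu\|-\|\nu\|^2$, hence $\bigl(c(f)-\|\nu_n\|\bigr)^2\le c(f)^2-F(\nu_n)\to0$, so the energies stay bounded and, by Remark \ref{remark2.0}, $(\nu_n)$ lies in a vaguely compact ball with a vague cluster point $\mu$. The enabling fact is that $\nu\mapsto\int f\,d\nu$ is vaguely upper semicontinuous on each such ball: for $h\in\mathcal{H}_0$ with $h\le f$ the map $\nu\mapsto\int h\,d\nu$ is vaguely u.s.c., and by (\ref{2.3}) one has $\int(f-h)\,d\nu\le R\,c^*(f-h)$ on the ball $\{\|\nu\|\le R\}$, so $\int f\,d\nu$ is a uniform limit of u.s.c.\ functions, hence u.s.c. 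Combined with the vague lower semicontinuity of the energy this gives $F(\mu)\ge\limsup_nF(\nu_n)=c(f)^2$, so $\mu$ maximises and (\ref{2.4}) holds with value $c(f)^2<+\infty$.

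For the characterisation I perturb a maximiser $\mu$. Varying the scale, $s\mapsto F((1+s)\mu)$ has an interior maximum at $s=0$, giving $\int f\,d\mu=\|\mu\|^2$; with $F(\mu)=c(f)^2$ this yields (c). For an additive perturbation, whenever $\sigma\in\mathcal{E}^+$ satisfies $\int G\mu\,d\sigma<+\infty$ the measures $\mu+t\sigma$ remain in $\mathcal{E}^+$, and expanding $\|\mu+t\sigma\|^2=\|\mu\|^2+2t\int G\mu\,d\sigma+t^2\|\sigma\|^2$ in $F(\mu+t\sigma)\le F(\mu)$, then letting $t\downarrow0$, gives the key inequality $\int G\mu\,d\sigma\ge\int f\,d\sigma$ for all such $\sigma$.

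Property (a) is the delicate step. I would first prove $G\mu\ge h$ q.e.\ for every $h\in\mathcal{H}_0$ with $h\le f$. Here $h-G\mu$ is u.s.c.\ with compact support, so each super-level set $K_\eps=\{h-G\mu\ge\eps\}$ is compact; if $c_*(K_\eps)>0$ one could carry a nonzero $\sigma\in\mathcal{E}^+$ by $K_\eps$, where $G\mu$ is bounded so that $\int G\mu\,d\sigma<+\infty$, and then $\int f\,d\sigma\ge\int h\,d\sigma\ge\int G\mu\,d\sigma+\eps\,\sigma(X)>\int G\mu\,d\sigma$ would contradict the key inequality. Thus $c_*(K_\eps)=0=c^*(K_\eps)$ by capacitability of compact sets, and countable subadditivity of $c^*$ over $\eps=1/n$ gives $G\mu\ge h$ q.e. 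Applying this to a sequence $h_k\le f$ with $c^*(f-h_k)\to0$ (definition of $\mathcal{H}_0^*$) and using $\sup_k h_k=f$ q.e.\ yields (a). Then $\mu$ does not charge the q.e.-exceptional set, so $G\mu\ge f$ $\mu$-a.e., while $\int(G\mu-f)\,d\mu=\|\mu\|^2-\int f\,d\mu=0$ forces $G\mu=f$ $\mu$-a.e., which is (b). That $\mu$ is carried by $\{f>0\}$ follows by restricting $\mu$ to $\{f=0\}$: there $G\mu=f=0$ $\mu$-a.e., so this restriction has zero energy and hence vanishes by Remark \ref{remark2.0}.

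Finally, for (\ref{2.5}): any maximiser of (\ref{2.4}) is admissible by (b) and attains $\int f\,d\mu=c(f)^2$ by (c); conversely, if $G\nu\le f$ $\nu$-a.e.\ then $\|\nu\|^2\le\int f\,d\nu$, so $\int f\,d\nu\le F(\nu)\le c(f)^2$, with equality forcing $F(\nu)=c(f)^2$, i.e.\ $\nu\in M(f)$. Hence the two maximising classes coincide with common value $c(f)^2$. That $M(f)$ depends only on the $c^*$-equivalence class of $f$ is immediate since $F$ does, measures of finite energy ignoring $c^*$-null sets. For vague compactness, every $\mu\in M(f)$ has $\|\mu\|=c(f)$, so $M(f)$ sits in a vaguely compact ball, and the same upper/lower semicontinuity used for existence shows $M(f)$ is vaguely closed. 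I expect (a) to be the main obstacle, which is precisely why I reduce to compactly supported u.s.c.\ minorants $h\le f$, turning the relevant super-level sets into compact---hence capacitable---sets so that the variational inequality can be promoted from an inner- to an outer-capacity (q.e.) statement via countable subadditivity.
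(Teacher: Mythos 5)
Your proof is correct and reaches every assertion of the theorem. It shares the paper's Gauss-variational skeleton---the normalization $\nu=t\sigma$ reducing (\ref{2.4}) to Theorem \ref{thm2.0}, the perturbations $(1+s)\mu$ and $\mu+t\sigma$, and an essentially identical derivation of (\ref{2.5}) and of the coincidence of the two maximizing classes---but it replaces three of the paper's appeals to \cite{Fu4} by self-contained arguments. First, the paper gets existence of maximizers for free from the attained maximum in Theorem \ref{thm2.0}, via the homothety $\mathrm M(f)=c(f)\mathrm M_1(f)$, citing \cite[Theorem 6.2]{Fu4} only for the vague upper semicontinuity of $\nu\mapsto\nu(f)$ needed for compactness; you instead reprove that semicontinuity on energy balls (uniform approximation by $h\in\cal H_0$, controlled by $R\,c^*(f-h)$ through (\ref{2.3})) and run a maximizing-sequence compactness argument---more work, but it makes both existence and the vague compactness of $\mathrm M(f)$ transparent at one stroke. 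Second, for the key property (a) the paper derives $\int f\,d\nu\le\int G\mu\,d\nu$ for \emph{all} $\nu\in\cal E^+$ (the Gauss inequality being trivial when $\|\mu+t\nu\|=+\infty$) and then cites \cite[Lemma 2.3]{Fu4} (that $(f-G\mu)^+\in\cal H^*_0$) together with \cite[Lemma 1.3 (b)]{Fu4}; your compact-level-set device with minorants $h\le f$ of class $\cal H_0$---where boundedness of $G\mu$ on $K_\eps$ legitimizes the additive perturbation, compactness of $K_\eps$ yields capacitability and hence $c^*(K_\eps)=0$, and countable subadditivity over $\eps=1/n$ and over the $h_k$ finishes---is an elementary substitute that needs the variational inequality only for measures of finite mutual energy with $\mu$. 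Your step ``$\sup_k h_k=f$ q.e.'' is true but deserves its one-line Chebyshev justification: $1_{\{f-h_k\ge1/n\}}\le n(f-h_k)$, whence $c^*(\{f-\sup_k h_k\ge1/n\})\le n\,\inf_k c^*(f-h_k)=0$. Third, for the carrier property the paper invokes the strict hereditarity of $\cal E^+_1$ and \cite[Lemma 6.5]{Fu4}, while your restriction of $\mu$ to $\{f=0\}$, which has zero energy by (b) and hence vanishes by Remark \ref{remark2.0}, is shorter and direct. In sum, the paper's route buys brevity given the machinery of \cite{Fu4}; yours buys a proof readable with only (\ref{2.2})--(\ref{2.3}), capacitability of $\cal H_0$, and countable subadditivity in hand.
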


The measures $\mu\in{\rm{M}(f)}$ are called the {\textit{capacitary measures}} for $f$.

\begin{proof} Let first $\mathrm M(f)$ consist of all $\mu\in\cal E^+$ maximizing the Gauss integral (over $\cal E^+$), provided such maximizing measures exist. In proving (\ref{2.4}) and the equations (c), the latter for $\mu\in\mathrm M(f)$, we may leave out the trivial case $c(f)=0$ in which $\int f\,d\nu=0$ for every $\nu\in\cal E^+$, by the latter relation in (\ref{2.2}) or (\ref{2.3}), or by Theorem \ref{thm2.0}, and so the Gauss integral equals $-\|\nu\|^2$, whose maximum over $\cal E^+$ is $0$. Observe that then $\mathrm M(f)=\{0\}$, for if $\mu\ne0$, then $\|\mu\|>0$ by Remark~\ref{remark2.0}. We are thus left with the case $c(f)>0$, where we only need to consider non-zero competing measures $\nu$ in (\ref{2.4}). By normalization write $\nu=t\nu_1$ with $t=\|\nu\|$, $\nu_1\in\partial\cal E^+_1$. The Gauss integral at $\nu\in\cal E^+$ then becomes
\begin{eqnarray}\label{2.6}
2t\int f\,d\nu_1-t^2=\Bigl(\int f\,d\nu_1\Bigr)^2-\Bigl(t-\int f\,d\nu_1\Bigr)^2.
\end{eqnarray}
For variable $t\in \,]0,+\infty[\,$ and fixed $\nu_1\in\partial\cal E^+_1$ this expression attains its greatest value $(\int f\,d\nu_1)^2$ at $t=\int f\,d\nu_1$. For the corresponding measure $\nu=t\nu_1$ we have
$$
\int f\,d\nu=t^2=\|\nu\|^2.
$$
When now varying $\nu_1$ in $\partial\cal E^+_1$ and maximizing $(\int f\,d\nu_1)^2$, this leads to the greatest value of the Gauss integral (over $\cal E^+$), and that greatest value equals $c(f)^2$  by  Theorem \ref{thm2.0}. Relation (\ref{2.4}) has thus been completely proved. At the same time we have established (c) for every $\mu\in{\rm{M}}(f)$.

Denote by  ${\rm{M}}_1(f)$ the class of all measures $\mu_1\in\cal E^+_1$ (equivalently: $\mu_1\in\partial\cal E^+_1$) which maximize $\int f\,d\nu_1$ over $\cal E^+_1$. We show that
\begin{eqnarray}\label{2.7}
{\rm{M}}(f)=c(f){\rm{M}}_1(f).
\end{eqnarray}
Again we may clearly assume that $c(f)>0$. If $\mu\in{\rm{M}}(f)$ then $\mu_1:=t^{-1}\mu\in{\rm{M}}_1(f)$ for $t:=\|\mu\|=c(f)$. In fact, $\mu_1\in\cal E^+_1$ and $c(f)^2=(\int f\,d\mu_1)^2=\|\mu\|^2$ by (c), so that $\mu_1$ indeed belongs to ${\rm{M}}_1(f)$ according to Theorem~\ref{thm2.0}. Conversely, if $\mu_1\in{\rm{M}}_1(f)$ then $\mu:=t\mu_1\in{\rm{M}}(f)$ for $t:=\int f\,d\mu_1=c(f)$. In fact, $\mu\in\cal E^+$ and the Gauss integral at $\mu$ equals  $(\int f\,d\mu_1)^2=c(f)^2$, by (\ref{2.6}).

It follows from (\ref{2.7}) that ${\rm{M}}(f)$ is vaguely compact (even if $c(f)=0$ and hence $\textrm{M}(f)=\{0\}$), and that every measure $\mu\in{\rm{M}}(f)$ is carried by $\{f>0\}$, because ${\rm{M}}_1(f)$ has these properties.
For the vague compactness of ${\rm{M}}_1(f)$, note that the function $\nu\mapsto\nu(f)$ on $\cal E^+_1$ is vaguely u.s.c.\ by \cite[Theorem 6.2]{Fu4}, whence ${\rm{M}}_1(f)=\bigl\{\nu\in\cal E^+_1:\ \nu(f)\ge c(f)\bigr\}$ is vaguely closed, and therefore vaguely compact along with $\cal E^+_1$ (Remark~\ref{remark2.0}). And every $\mu\in {\rm{M}}_1(f)$ is carried by
$\{f>0\}$ by Theorem \ref{thm2.0} and \cite[Lemma 6.5]{Fu4} because $\cal E^+_1$ is {\textit{strictly hereditary}} in the sense that, for every $\mu\in\cal E^+_1$ and every $\nu\in\cal M^+\setminus\{\mu\}$ with $\nu\le\mu$ we have $\int G\nu\,d\nu<1$. In fact,
$$
\int G\mu\,d\mu\ge\int G\nu\,d\nu+\int G(\mu-\nu)\,d(\mu-\nu)>\int G\nu\,d\nu
$$
by Remark \ref{remark2.0}.

To establish (a) and (b) (also if $c(f)=0$), consider any measure $\mu\in{\rm{M}}(f)$. For any $\nu\in\cal E^+$ we have $\mu+t\nu\in\cal M^+$ for all $t\in\,]0,+\infty[$, and hence
$$
2\int f\,d(\mu+t\nu)-\|\mu+t\nu\|^2\le2\int f\,d\mu-\|\mu\|^2,
$$
also if $\|\mu+t\nu\|=+\infty$, for $\int f\,d\mu$ and $\int f\,d\nu$ are finite. Thus,
$$
2t\int(f-G\mu)\,d\nu-t^2\|\nu\|^2\le0,
$$
and therefore
\begin{eqnarray*}
\int f\,d\nu\le\int G\mu\,d\nu
\end{eqnarray*}
for all $\nu\in\cal E^+$. According to \cite[Lemma 2.3]{Fu4} $(f-G\mu)^+$ is of class $\cal H^*_0$ and hence $\nu$-integrable for every $\nu\in\cal E^+$ and  $c$-capacitable. Since $\int(f-G\mu)\,d\nu\le0$ for any $\nu\in\cal E^+$ as shown above, we thus have $\int(f-G\mu)^+\,d\nu=0$ for any $\nu\in\cal E^+(\{(f-G\mu)^+>0\})$, and clearly even for arbitrary $\nu\in\cal E^+$. Hence $c^*((f-G\mu)^+)= c_*((f-G\mu)^+)=0$, by the latter expression in (\ref{2.2}), and we conclude by \cite[Lemma 1.3 (b)]{Fu4} that indeed $f\le G\mu$ q.e. Having thus established (a) we get in particular
\begin{eqnarray*}\label{2.8}
G\mu\ge f{\text{\ $\nu$-a.e.\ for every\ $\nu\in{\cal E}^+$}}
\end{eqnarray*}
because $\nu^*(\{G\mu<f\})=0$ on account of the latter inequality (\ref{2.3}). Hence, $G\mu\ge f$ $\mu$-a.e. Since, by (c), $\int G\mu\,d\mu=\int f\,d\mu<+\infty$, we thus get $G\mu=f$ $\mu$-a.e., which proves (b).

It remains to establish (\ref{2.5}) and the fact that the maximizing measures in (\ref{2.5}) form the class $\mathrm M(f)$. For any competing measure $\nu$ in (\ref{2.5}) we find by integration with respect to $\nu$ that $\|\nu\|^2\le\int f\,d\nu$ and hence
\begin{eqnarray}\label{2.9}
\int f\,d\nu\le2\int f\,d\nu-\|\nu\|^2\le c(f)^2.
\end{eqnarray}
On the other hand, every $\mu\in{\rm{M}}(f)$ is a competing measure in (\ref{2.5}) by (b) and gives the value $c(f)^2$ to $\int f\,d\mu$ by (c). On account of (\ref{2.9}), we thus see that (\ref{2.5}) holds and every $\mu\in{\rm{M}}(f)$ is maximizing in (\ref{2.5}). Conversely, every maximizing measure $\mu$ for (\ref{2.5}) has $\int G\mu\,d\mu\le\int f\,d\mu$ and hence gives the value $c(f)^2$ to the Gauss integral:
$$
2\int f\,d\mu-\int G\mu\,d\mu\ge\int f\,d\mu=c(f)^2,
$$
whence ${\mu\in\rm{M}}(f)$.
\end{proof}

\begin{remark}\label{key} The properties (a)--(c) characterize $\mu\in\mathrm M(f) $ uniquely up to $c^*$-equivalence. Namely, if $\mu\in\mathcal E^+$ possess (a)--(c), then $\mu\in\mathrm M(f)$.
In the above theorem, property (c) alone characterizes $\mathrm M(f)$ within $\mathcal E^+$. For if $\mu\in\mathcal E^+$ has property (c) then $\mu\in\mathrm M(f)$ because
$$
2\int f\,d\mu-\|\mu\|^2=\|\mu\|^2=c(f)^2,
$$
so that $\mu$ maximizes the Gauss integral. Judging from the classical theory one might expect that the key properties (a) and (b) together might characterize $\mathrm M(f)$, but this does not seem to be the case in the general setting of Theorem \ref{thm2.1}. However, if $G$ is \textit{consistent} (Definition \ref{def3.5}) and \textit{positive (semi)definite} (Definition \ref{def3.4}) then (a) and (b) together imply that $\mu\in\mathrm M(f)$, see Theorem \ref{thm3.9}.
\end{remark}

\section{The dual energy capacity}\label{sec3}

\subsection{Upper and lower dual energy capacity.} We shall study a kind of {\textit{dual}} energy capacity $\gamma$ and corresponding upper and lower dual capacity $\gamma^*,\gamma_*:\ \cal F^+\to[0,+\infty]$. This concept is a particular case of an `encombrement' in the sense of Choquet \cite{Ch6}. It will be shown in Corollary \ref{cor3.13} that $\gamma^*=c^*$ and $\gamma_*=c_*$ if the kernel $G$ is consistent and positive (semi)definite (see the next two subsections for these concepts). To begin with, $G$ is just required to be symmetric, l.s.c., and strictly positive on the diagonal, as stated in the beginning of Section 2.

With any function $f\in\cal F^+$, that is, $f:X\to[0,+\infty]$, we associate the following two convex subsets of $\cal E^+$:
\begin{align*}\Gamma^*(f)
:&=\bigl\{\lambda\in\cal E^+:\ c^*((f-G\lambda)^+)=0\bigr\}\\
&=\bigl\{\lambda\in\cal E^+:\ G\lambda\ge f\text{\ q.e.}\bigr\},\\
\Gamma_*(f):&=\bigl\{\lambda\in\cal E^+:\ c_*((f-G\lambda)^+)=0\bigr\}\\
&=\Bigl\{\lambda\in\cal E^+:\ \int G\lambda\,d\nu\ge\int_*f\,d\nu\text{\ for all\ $\nu\in\cal E^+$}\Bigr\}\\
&(=)\bigl\{\lambda\in\cal E^+:\ G\lambda\ge f\text{\ n.e.}\bigr\}.
\end{align*}
Here $(=)$ indicates equality provided that $f$ is $\nu$-measurable for every $\nu\in\cal E^+$ of compact support contained in $\{f>0\}$; however, if $f\in\cal F^+$ is \textit {arbitrary\/}, then the relation $(=)$ should be replaced by ${}\supset{}$.
These and the alternative expression for $\Gamma^*(f)$ are immediate by \cite[Sections~1.3 and~7.4]{Fu4} because $G\lambda$ is l.s.c.\ and hence $\cal E^+$-measurable (that is, measurable with respect to every measure in $\cal E^+$). For the second representation of $\Gamma_*(f)$ we use the latter expression (\ref{2.2}).

Clearly, $\Gamma^*(f)\subset\Gamma_*(f)$. For $f_1,f_2\in\cal F^+$ the relations
\begin{eqnarray}\label{3.1}
\Gamma^*(f_1)\supset\Gamma^*(f_2),\quad\Gamma_*(f_1)\supset\Gamma_*(f_2)
\end{eqnarray}
hold when $f_1\le f_2$ q.e. Hence $\Gamma^*(f)$ and $\Gamma_*(f)$ only depend on the $c^*$-equivalence class of $f\in\cal F^+$. The latter relation (\ref{3.1}) likewise holds under the weaker hypothesis that $\int_* f_1\,d\nu\le\int_* f_2\,d\nu$ for every $\nu\in\cal E^+$. It follows that, if $f_2$ is $\cal E^+$-measurable, then the latter relation (\ref{3.1}) holds if $c_*((f_1-f_2)^+)=0$, thus in particular if $f_1\le f_2$ n.e., cf.\ again \cite[Theorem 7.4]{Fu4}.

\begin{definition}\label{def3.1} The {\textit{upper}} and {\textit{lower}} dual capacity of a function $f\in\cal F^+$ are defined by
$$
\gamma^*(f)=\inf\,\bigl\{\|\lambda\|:\ \lambda\in\Gamma^*(f)\bigr\},\quad
\gamma_*(f)=\inf\,\bigl\{\|\lambda\|:\ \lambda\in\Gamma_*(f)\bigr\},
$$
respectively, interpreted as $+\infty$ if $\Gamma^*(f)$, resp.\ $\Gamma_*(f)$, is void.
\end{definition}

The terms upper, resp.\ lower, are justified here by Theorem \ref{thm3.12} below (where $G$ is assumed to be consistent in the sense of Definition \ref{def3.5}), resp.\ consistent and positive (semi)definite, cf.\ Definition \ref{def3.4}.

The values $\gamma^*(f)$ and $\gamma_*(f)$ only depend on the $c^*$-equivalence class of $f$. Since $\Gamma^*(f)\subset\Gamma_*(f)$ we have $\gamma_*(f)\le\gamma^*(f)$. In case of equality we may denote the common value by $\gamma(f)$, the {\textit{dual capacity}} of $f$.

Either functional $\gamma^*(f)$ or $\gamma_*(f)$ is positive homogeneous, and if $f_1\le f_2$ q.e.\ then $\gamma^*(f_1)\le\gamma^*(f_2)$ and $\gamma_*(f_1)\le\gamma_*(f_2)$. When $f_2$ is $\cal E^+$-measurable the latter inequality holds if just $c_*((f_1-f_2)^+)=0$, in particular if $f_1\le f_2$ n.e., cf.\ the comments to (\ref{3.1}) above.

\begin{lemma}\label{lemma3.2} Let $f\in\cal H^*_0$. Then $\Gamma^*(f)=\Gamma_*(f)$ and hence $\gamma^*(f)=\gamma_*(f)\;(=\gamma(f))$. Furthermore, $\gamma(f)\le c(f)$.
\end{lemma}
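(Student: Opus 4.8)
The plan is to exploit the fact, already invoked in the proof of Theorem~\ref{thm2.1}, that $(f-G\lambda)^+$ belongs to $\cal H^*_0$ whenever $f\in\cal H^*_0$ and $\lambda\in\cal E^+$ (by \cite[Lemma 2.3]{Fu4}). Since every function of class $\cal H^*_0$ is $c$-capacitable, this collapses the distinction between the lower and the upper capacity of $(f-G\lambda)^+$, and that distinction is precisely what separates the two sets $\Gamma_*(f)$ and $\Gamma^*(f)$. Once this observation is in place, all three assertions follow by direct bookkeeping with the definitions.

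For the equality $\Gamma^*(f)=\Gamma_*(f)$, the inclusion $\Gamma^*(f)\subset\Gamma_*(f)$ holds in general and was already noted before the lemma. For the reverse, I would take $\lambda\in\Gamma_*(f)$, so that $c_*\bigl((f-G\lambda)^+\bigr)=0$. Because $(f-G\lambda)^+\in\cal H^*_0$ is $c$-capacitable, $c^*\bigl((f-G\lambda)^+\bigr)=c_*\bigl((f-G\lambda)^+\bigr)=0$, whence $\lambda\in\Gamma^*(f)$. Thus $\Gamma_*(f)\subset\Gamma^*(f)$, and the two sets coincide. The equality $\gamma^*(f)=\gamma_*(f)\;(=\gamma(f))$ then follows at once from Definition~\ref{def3.1}, the two defining infima being taken over one and the same set.

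For the inequality $\gamma(f)\le c(f)$, I would simply exhibit one competing measure realizing the bound. By Theorem~\ref{thm2.1} there exists a capacitary measure $\mu\in\mathrm M(f)$. Property~(a) gives $G\mu\ge f$ q.e., so that $c^*\bigl((f-G\mu)^+\bigr)=0$ and hence $\mu\in\Gamma^*(f)$; property~(c) gives $\|\mu\|^2=c(f)^2$, that is, $\|\mu\|=c(f)$. Therefore $\gamma(f)=\inf\{\|\lambda\|:\lambda\in\Gamma^*(f)\}\le\|\mu\|=c(f)$.

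The argument is short once the right tool is identified; the only genuine step is recognizing $(f-G\lambda)^+$ as a member of $\cal H^*_0$, which supplies capacitability and thereby the collapse $\Gamma_*(f)=\Gamma^*(f)$. The remainder is a routine appeal to the existence and to properties (a) and (c) of the capacitary measures furnished by Theorem~\ref{thm2.1}. I do not expect any serious obstacle here; the care needed is mainly to ensure that the fact $(f-G\lambda)^+\in\cal H^*_0$ is available for every $\lambda\in\cal E^+$, not merely for the maximizing measure, which is exactly the generality in which \cite[Lemma 2.3]{Fu4} is stated.
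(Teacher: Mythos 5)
Your proposal is correct and follows essentially the same route as the paper's own proof: both obtain $(f-G\lambda)^+\in\cal H^*_0$ from \cite[Lemma 2.3]{Fu4}, use its $c$-capacitability to collapse $\Gamma_*(f)$ into $\Gamma^*(f)$, and then bound $\gamma(f)$ by exhibiting a capacitary measure $\mu\in\mathrm M(f)$ with properties (a) and (c) of Theorem~\ref{thm2.1}. The only (harmless) difference is that the paper invokes \cite[Lemma 2.3]{Fu4} for arbitrary $\lambda\in\cal M^+$, since $G\lambda\in\cal G\subset\cal G^*$ regardless of finite energy, whereas you restrict to $\lambda\in\cal E^+$, which is all the lemma requires.
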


\begin{proof} From $f\in\cal H_0^*$ follows $(f-G\lambda)^+\in\cal H_0^*$ for any $\lambda\in\cal M^+$ by \cite[Lemma 2.3]{Fu4} because $G\lambda\in\cal G\subset\cal G^*$. Hence $c^*((f-G\lambda)^+)=c_*((f-G\lambda)^+)$, and so $\Gamma^*(f)=\Gamma_*(f)$ (then also denoted $\Gamma(f)$). By Theorem \ref{thm2.1} (a), a capacitary measure $\mu\in{\rm{M}}(f)$ belongs to $\Gamma^*(f)$, which in view of  Theorem \ref{thm2.1} (c) yields $\gamma^*(f)\le\|\mu\|=c(f)$.
\end{proof}

\begin{lemma}\label{lemma3.3} For any measure $\mu\in\cal E^+$ we have $\gamma^*(G\mu)=\gamma_*(G\mu)=\|\mu\|$.
\end{lemma}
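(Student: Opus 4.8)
The plan is to sandwich all three quantities between $\|\mu\|$ from above and below. The upper bound is immediate: since $G\mu\ge G\mu$ everywhere, we have $\mu\in\Gamma^*(G\mu)\subset\Gamma_*(G\mu)$, so by Definition \ref{def3.1} and the general inequality $\gamma_*\le\gamma^*$,
$$
\gamma_*(G\mu)\le\gamma^*(G\mu)\le\|\mu\|.
$$
Thus everything reduces to showing the reverse inequality $\gamma_*(G\mu)\ge\|\mu\|$; and because $\Gamma_*$ is the \emph{larger} of the two sets, it suffices to prove that every competitor $\lambda\in\Gamma_*(G\mu)$ already satisfies $\|\lambda\|\ge\|\mu\|$.

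The key step is to use the \emph{middle} representation of $\Gamma_*$ rather than the pointwise one. Since $G\mu$ is l.s.c.\ and hence $\cal E^+$-measurable, the lower integral coincides with the ordinary one, $\int_*G\mu\,d\nu=\int G\mu\,d\nu$, so membership $\lambda\in\Gamma_*(G\mu)$ reads
$$
\int G\lambda\,d\nu\ge\int G\mu\,d\nu\qquad\text{for every }\nu\in\cal E^+.
$$
I would then feed this inequality the two particular measures $\nu=\lambda$ and $\nu=\mu$ (both lawful, as both lie in $\cal E^+$) and chain the results through symmetry of the mutual energy:
$$
\|\lambda\|^2=\int G\lambda\,d\lambda\ge\int G\mu\,d\lambda=\int G\lambda\,d\mu\ge\int G\mu\,d\mu=\|\mu\|^2.
$$
Here the first inequality is the test $\nu=\lambda$, the middle equality is symmetry of $G$, and the last inequality is the test $\nu=\mu$. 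Taking square roots gives $\|\lambda\|\ge\|\mu\|$, and passing to the infimum over $\lambda\in\Gamma_*(G\mu)$ yields $\gamma_*(G\mu)\ge\|\mu\|$, completing the sandwich.

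The point I would emphasize as the genuine obstacle is that the obvious route, bounding $\int G\lambda\,d\mu\le\|\lambda\|\,\|\mu\|$ by a Cauchy--Schwarz inequality for mutual energy and cancelling one factor of $\|\mu\|$, is \emph{not available} here: at this stage $G$ is only assumed symmetric, l.s.c.\ and strictly positive on the diagonal, and positive (semi)definiteness (Definition \ref{def3.4}), which underlies that Cauchy--Schwarz bound, has not been imposed. The two-substitution argument circumvents this entirely, invoking nothing beyond symmetry and the linearity of the energy in each slot. The only subtlety to check carefully is the measurability remark that legitimises replacing $\int_*G\mu\,d\nu$ by $\int G\mu\,d\nu$; the degenerate case $\mu=0$ (where all three quantities vanish) is trivial.
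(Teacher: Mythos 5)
Your proof is correct and is essentially identical to the paper's: both establish $\gamma^*(G\mu)\le\|\mu\|$ from $\mu\in\Gamma^*(G\mu)$, and both obtain the reverse inequality by testing the defining condition $\int G\lambda\,d\nu\ge\int G\mu\,d\nu$ of $\Gamma_*(G\mu)$ against $\nu=\mu$ and $\nu=\lambda$ and chaining through the symmetry of the mutual energy, yielding the same display $\|\mu\|^2\le\int G\lambda\,d\mu=\int G\mu\,d\lambda\le\|\lambda\|^2$. Your closing observation that this two-substitution argument deliberately avoids Cauchy--Schwarz (not yet available, since positive definiteness is not assumed at this stage) is a correct and worthwhile gloss on why the paper's proof is structured this way.
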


\begin{proof} Since $\mu\in\Gamma^*(G\mu)$ we have $\gamma^*(G\mu)\le\|\mu\|$. To prove that $\|\mu\|\le\gamma_*(G\mu)$ note that, for any $\lambda\in\Gamma_*(G\mu)$, we have $\int G\lambda\,d\nu\ge\int G\mu\,d\nu$ for all $\nu\in\cal E^+$, and hence
$$
\|\mu\|^2=\int G\mu\,d\mu\le\int G\lambda\,d\mu=\int G\mu\,d\lambda
\le\int G\lambda\,d\lambda=\|\lambda\|^2.
$$
\vskip-3mm
\end{proof}

\subsection{Consistent kernel.}
\begin{definition}\label{def3.5} A (symmetric) kernel $G:X\times X\to[0,+\infty]$ is said to be \textit{consistent} if, for every measure $\lambda\in\cal E^+$, the following two equivalent conditions are fulfilled:
\begin{itemize}
\item[\rm(i)] $G\lambda\in\cal H^*_0$,
\item[\rm(ii)] the vaguely l.s.c.\ function $\mu\mapsto\int G\lambda\,d\mu$ considered on $\cal E^+_1$ is finite and vaguely u.s.c.\ (and hence vaguely continuous).
\end{itemize}
\end{definition}

Since $G\lambda\in\cal G$ the equivalence of (i) and (ii) follows from \cite[Theorem 6.2]{Fu4}. Property (ii) is the same as property (CW) in \cite[Lemma 3.4.1]{Fu1}, where $G$ is supposed to be positive (semi)definite, cf.\ Definition \ref{def3.4} below. And for any positive definite (symmetric) kernel $G$, property (CW) is equivalent with the property (C) of consistency defined in  \cite[p.\ 167]{Fu1}, cf.\ \cite{Fu2}.

\begin{remark}\label{remark3.8a} Suppose that $G$ is consistent. For any function $g\in\cal G^*$
we have
$\gamma^*(g)=\gamma_*(g)$ $({}=\gamma(g))$ because
$(g-G\mu)^+$
is of class $\cal G^*$ and hence $c$-capacitable for any measure $\mu\in\cal E^+$ by \cite[Lemma 2.3]{Fu4} (note that $G\mu\in\cal H^*_0$ by Definition \ref{def3.5} (i)).
We then have $\|\mu\|=\gamma(G\mu)\le c(G\mu)$. The equality holds by Lemma \ref{lemma3.3}. The inequality holds by Lemma \ref{lemma3.2} in view of Definition \ref{def3.5} (i); and equality prevails here if and only if $G$ in addition is \textit{positive (semi)definite} (Definition \ref{def3.4}).
\end{remark}

\begin{theorem}\label{thm3.6} {\rm(Convergence theorem.)} Suppose that $G$ is consistent. For any vague cluster point $\lambda\in\cal E^+$ for a sequence of measures $\lambda_n\in\cal E^+$ with bounded energy norms $\|\lambda_n\|\leqslant a<+\infty$ for some finite constant $a$ we have
$$
G\lambda\ge\liminf_n\,G\lambda_n\text{\ q.e.}
$$
Equality prevails q.e.\  in case $\lambda_n\to\lambda$ vaguely as $n\to\infty$.
\end{theorem}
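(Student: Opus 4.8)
The plan is to prove the \emph{hard} direction, the cluster‑point inequality $G\lambda\ge\liminf_n G\lambda_n$ q.e.; the final equality assertion then drops out by combining it with the trivial lower semicontinuity of the potential. By passing to a subsequence $\lambda_{n_k}\to\lambda$ and relabelling, I may assume $\lambda_n\to\lambda$ vaguely and prove $G\lambda\ge g$ q.e.\ with $g:=\liminf_n G\lambda_n$; since the liminf over the full sequence is dominated by the liminf over any subsequence, this gives the stated inequality. For the equality, I use that $G(x,\cdot)$ is l.s.c.\ and $\ge0$, so vague convergence forces $G\lambda(x)\le\liminf_n G\lambda_n(x)=g(x)$ for every $x$; together with $G\lambda\ge g$ q.e.\ this yields $G\lambda=g$ q.e.

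The soft step produces the inequality merely n.e.\ and is where consistency first enters. By Definition~\ref{def3.5}(ii), for each fixed $\sigma\in\mathcal E^+$ the functional $\mu\mapsto\int G\sigma\,d\mu$ is vaguely continuous on $\mathcal E^+_1$, so after normalising the uniformly bounded $\lambda_n$ into $\mathcal E^+_1$ one gets $\int G\sigma\,d\lambda_n\to\int G\sigma\,d\lambda$. Applying Fatou's lemma to the nonnegative functions $G\lambda_n$ against $\sigma$ gives $\int g\,d\sigma\le\liminf_n\int G\sigma\,d\lambda_n=\int G\lambda\,d\sigma$ for every $\sigma\in\mathcal E^+$. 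By the integral description of $\Gamma_*(g)$ this says exactly $\lambda\in\Gamma_*(g)$, that is $c_*((g-G\lambda)^+)=0$, i.e.\ $g\le G\lambda$ n.e.; equivalently $\sigma(\{g>G\lambda\})=0$ for every $\sigma\in\mathcal E^+$.

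The hard step, which I expect to be the main obstacle, is to promote this to $c^*((g-G\lambda)^+)=0$, equivalently $c^*(\{g>G\lambda\})=0$. Writing $g=\sup_N g_N$ with $g_N:=\inf_{n\ge N}G\lambda_n$ and discarding the set $\{G\lambda=+\infty\}$ (of zero $c^*$ since $\lambda\in\mathcal E^+$), countable subadditivity of $c^*$ reduces the claim to $c^*(\{g_N>G\lambda+t\})=0$ for each $N$ and each rational $t>0$. The device is to approximate $g_N$ from above by members of $\mathcal H^*_0$: the finite minima $u_{N,M}:=\min_{N\le n\le M}G\lambda_n$ lie in $\mathcal H^*_0$ (consistency gives $G\lambda_n\in\mathcal H^*_0$, and $\mathcal H^*_0$ is stable under finite minima since $|\min(a_1,a_2)-\min(b_1,b_2)|\le|a_1-b_1|+|a_2-b_2|$ and $c^*$ is increasing and subadditive); they decrease to $g_N$ as $M\to\infty$, whence $v_{N,M}:=(u_{N,M}-G\lambda-t)^+\in\mathcal H^*_0$ by \cite[Lemma 2.3]{Fu4}, as $G\lambda+t\in\mathcal G\subset\mathcal G^*$. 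Since $\{g_N>G\lambda+t\}\subset\{v_{N,M}>0\}$ for every $M$, and $w_N:=(g_N-G\lambda-t)^+=\inf_M v_{N,M}$ satisfies $w_N=0$ n.e.\ (because $g_N\le g\le G\lambda$ n.e.\ by the soft step), it remains to force $c^*(w_N)=0$.

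To close it, fix $\nu\in\mathcal E^+_1$: the numbers $\int v_{N,M}\,d\nu$ decrease to $\int w_N\,d\nu=0$ by dominated convergence (using that functions of class $\mathcal H^*_0$ are $\mathcal E^+$-integrable and that $w_N=0$ $\nu$-a.e.). Theorem~\ref{thm2.0} identifies $c(v_{N,M})=\max_{\nu\in\mathcal E^+_1}\int v_{N,M}\,d\nu$, and by \cite[Theorem 6.2]{Fu4} each $\nu\mapsto\int v_{N,M}\,d\nu$ is vaguely u.s.c.\ on the vaguely compact set $\mathcal E^+_1$ (Remark~\ref{remark2.0}). A Dini‑type argument for a decreasing sequence of u.s.c.\ functions on a compact space then gives $\inf_M c(v_{N,M})=\max_{\nu}\inf_M\int v_{N,M}\,d\nu=\max_\nu\int w_N\,d\nu=0$. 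Since $w_N\le v_{N,M}$ and $v_{N,M}\in\mathcal H^*_0$ is $c$-capacitable, $c^*(w_N)\le\inf_M c(v_{N,M})=0$, so $w_N=0$ q.e.\ and $c^*(\{g_N>G\lambda+t\})=0$; assembling over $N$ and rational $t$ yields $G\lambda\ge g$ q.e. The genuine crux — and the only place where the vague compactness of $\mathcal E^+_1$ is needed beyond the Fatou step — is exactly this compactness/Dini interchange $\inf_M\max_\nu=\max_\nu\inf_M$, which is what converts the n.e.\ vanishing of $w_N$ into the q.e.\ vanishing demanded by $c^*$.
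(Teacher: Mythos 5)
Your proof is correct in substance, and for the crucial step it takes a genuinely different route from the paper's. The paper makes the same countable reduction to tail infima $\inf_{n\ge q}G\lambda_n$ and levels $1/p$, but then works with the \emph{sets} $N_{p,q}=\{G\lambda+1/p\le\inf_{n\ge q}G\lambda_n\}$: it imports \cite[Theorem 2.2]{Fu4} to conclude $\inf_{n\ge q}G\lambda_n\in\cal H^*_0$, deduces that $N_{p,q}$ is quasicompact (being quasiclosed and contained in the quasicompact set $\{G\lambda_q\ge1/p\}$), hence $c$-capacitable with finite capacity, and then kills the \emph{inner} capacity by testing with $\nu\in\cal E^+$ of compact support in $N_{p,q}$ --- essentially your Fatou/consistency estimate localized to that set, forcing $\nu(X)/p\le0$ and so $\nu=0$. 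You instead prove the n.e.\ inequality globally ($\lambda\in\Gamma_*(g)$) and then upgrade n.e.\ to q.e.\ by hand: finite minima $u_{N,M}\in\cal H^*_0$ (your stability-under-$\min$ argument is fine), $v_{N,M}=(u_{N,M}-G\lambda-t)^+\in\cal H^*_0$ by \cite[Lemma 2.3]{Fu4}, and the Dini interchange $\inf_M\max_\nu=\max_\nu\inf_M$ on the vaguely compact $\cal E^+_1$ (Remark \ref{remark2.0}) with the vaguely u.s.c.\ integrands supplied by \cite[Theorem 6.2]{Fu4} and Theorem \ref{thm2.0}. That interchange is valid (nonempty closed decreasing level sets in a compact space), and it in effect re-proves the special case of \cite[Theorem 2.2]{Fu4} that the paper quotes wholesale, namely that $c^*$ behaves capacitably under countable decreasing infima of $\cal H^*_0$-functions. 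So your route is more self-contained, at the price of more computation; the paper's is shorter because the quasitopological machinery of \cite{Fu3}, \cite{Fu4} already contains your Dini step. (Minor point: your claim $c^*(\{G\lambda=+\infty\})=0$ also runs through consistency: $G\lambda\in\cal H^*_0$ gives $c(G\lambda)<+\infty$, whence $c^*(\{G\lambda\ge m\})\le c(G\lambda)/m\to0$.)

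One step does need repair: the opening reduction ``by passing to a subsequence $\lambda_{n_k}\to\lambda$ and relabelling'' is not available at this level of generality. Theorem \ref{thm3.6} is stated for an arbitrary locally compact $X$ (second countability enters only later in the paper), and then $a\cal E^+_1$, though vaguely compact, need not be metrizable; a vague cluster point of a sequence is the limit of a sub\emph{net}, not in general of a subsequence. Fortunately the reduction is unnecessary: your soft step only uses the one-sided bound $\liminf_n\int G\sigma\,d\lambda_n\le\int G\sigma\,d\lambda$, and this holds for any vague cluster point, since by consistency (Definition \ref{def3.5}\,(ii), extended to $a\cal E^+_1$ by homogeneity) the map $\mu\mapsto\int G\sigma\,d\mu$ is vaguely continuous there, so $\int G\sigma\,d\lambda$ is a cluster value of the real sequence $\bigl(\int G\sigma\,d\lambda_n\bigr)$ --- exactly the estimate the paper's proof uses. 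Fatou then runs along the original sequence, the hard step never touches the cluster point again, and the equality assertion (which genuinely assumes vague convergence) is unaffected. With that one-line fix your proof is sound.
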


\begin{proof} Such a cluster point exists by Remark \ref{remark2.0}. For any $p,q\in\NN$ write
\begin{align*}
N&=\bigl\{G\lambda<\liminf_n\,G\lambda_n\bigr\},\\
N_p&=\bigl\{G\lambda+1/p<\liminf_n\,G\lambda_n\bigr\},\\
N_{p,q}&=\bigl\{G\lambda+1/p\le\inf_{n\ge q}\,G\lambda_n\bigr\}.
\end{align*}
Then
$$
N=\bigcup_p\,N_p,\quad N_p\subset\bigcup_q\,N_{p,q},
$$
and in view of the countable subadditivity of $c^*$ it therefore suffices to prove that $c^*(N_{p,q})=0$ for any $p,q$. By Definition \ref{def3.5}, all $G\lambda_n$ are of class $\cal H^*_0$, and so is therefore $\inf_{n\ge q}G\lambda_n$ for each $q$ by \cite[Theorem 2.2]{Fu4}. In particular, $\inf_{n\ge q}G\lambda_n$ is quasi u.s.c., by \cite[Theorem 2.5 (b)]{Fu4}. (For the notion of quasi u.s.c.\ functions, see \cite[Section 3]{Fu3} or Section \ref{quasi} below.) Since $G\lambda+1/p$ is l.s.c.\ it follows that each $N_{p,q}$ is quasiclosed, and in fact quasicompact, being a subset of $A:=\{G\lambda_q\ge1/p\}$ which is quasicompact according to \cite[Lemma 2.4]{Fu4} because $1_A\le pG\lambda_q$, hence $1_A\in\cal H^*_0$, again by \cite[Theorem 2.5 (b)]{Fu4}. Being quasicompact, $N_{p,q}$ is $c$-capacitable with finite $c$-capacity, cf.\ the version of the proof of \cite[Lemma 4.6]{Fu4} for sets instead of functions. Thus it remains to show that $c_*(N_{p,q})=0$. Let $\nu\in\cal E^+$  have compact support contained in $N_{p,q}$ (cf.\ the text following (\ref{2.2}) and (\ref{2.3})). We conclude that $\nu=0$ as follows:
\begin{align*} \int(G\lambda+1/p)\,d\nu
&\le\int(\inf_{n\ge q}G\lambda_n)\,d\nu\le\inf_{n\ge q}\,\int G\lambda_n\,d\nu=\inf_{n\ge q}\,\int G\nu\,d\lambda_n\\
&\le\liminf_n\,\int G\nu\,d\lambda_n\le\int G\nu\,d\lambda=\int G\lambda\,d\nu,
\end{align*}
where the last inequality holds since $G$ is consistent, cf.\ (ii) in Definition \ref{def3.5} according to which the function $\lambda\mapsto\int G\nu\,d\lambda$ on $a\cal E^+_1$ is vaguely continuous, and since $\lambda$ is a vague cluster point of $(\lambda_n)\subset a\cal E^+_1$, $\int G\nu\,d\lambda$ becomes a cluster point of $(\int G\nu\,d\lambda_n)$. This proves the former assertion of the theorem. The latter assertion follows in view of the lower semicontinuity of the kernel $G$.
 \end{proof}

\begin{theorem}\label{thm3.7} Suppose that $G$ is consistent. For any $f\in\cal F^+$ such that $\gamma^*(f)$, resp.\ $\gamma_*(f)$, is finite, the infima in Definition\/ {\rm\ref{def3.1}} are attained:
$$
\gamma^*(f)=\min\,\bigl\{\|\lambda\|:\ \lambda\in\Gamma^*(f)\bigr\},\quad
\gamma_*(f)=\min\,\bigl\{\|\lambda\|:\ \lambda\in\Gamma_*(f)\bigr\}.
$$
\end{theorem}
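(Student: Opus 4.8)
The plan is to realize each infimum as a genuine minimum by extracting a vague cluster point of a minimizing sequence and checking that this limit measure is both feasible and norm‑minimal. Two general facts drive both cases. First, by Remark~\ref{remark2.0} the set $\cal E_1^+$ is vaguely compact, hence so is every energy ball $a\cal E_1^+=\{\lambda\in\cal E^+:\|\lambda\|\le a\}$ (being the image of $\cal E_1^+$ under the vague homeomorphism $\mu\mapsto a\mu$); thus any norm‑bounded sequence in $\cal E^+$ has a vague cluster point in $\cal E^+$. Second, by \cite[Lemma 2.2.1]{Fu1} the energy $\lambda\mapsto\|\lambda\|^2$ and, for each fixed $\nu\in\cal E^+$, the mutual energy $\lambda\mapsto\int G\lambda\,d\nu$ are vaguely lower semicontinuous.

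First I would treat $\gamma_*(f)$, where no passage through q.e.\ inequalities is needed. Using the second description of $\Gamma_*(f)$ recalled in Section~\ref{sec3}, namely $\Gamma_*(f)=\{\lambda\in\cal E^+:\int G\lambda\,d\nu\ge\int_*f\,d\nu\text{ for all }\nu\in\cal E^+\}$, each of the sets $\{\lambda\in\cal E^+:\int G\lambda\,d\nu\ge\int_*f\,d\nu\}$ is a superlevel set of the vaguely l.s.c.\ functional $\lambda\mapsto\int G\lambda\,d\nu$, hence vaguely closed; their intersection $\Gamma_*(f)$ is therefore vaguely closed. Fixing any $a>\gamma_*(f)$ (finite by hypothesis), the set $\Gamma_*(f)\cap a\cal E_1^+$ is nonvoid and vaguely compact, and the vaguely l.s.c.\ functional $\|\cdot\|$ attains its minimum on it; since enlarging $a$ does not change the infimum of $\|\cdot\|$ over $\Gamma_*(f)$, this minimum is exactly $\gamma_*(f)$.

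The case $\gamma^*(f)$ is where the real work lies, and it is the step I expect to be the main obstacle: the feasibility condition $G\lambda\ge f$ q.e.\ defining $\Gamma^*(f)$ is a quasi‑everywhere pointwise domination, not a superlevel set of an l.s.c.\ functional, so the vague closedness argument above is not available and it is not clear a priori that the condition survives passage to a vague limit. Here I would choose a minimizing sequence $\lambda_n\in\Gamma^*(f)$ with $\|\lambda_n\|\to\gamma^*(f)$; being norm‑bounded, it has a vague cluster point $\lambda\in\cal E^+$. The crux is the Convergence Theorem (Theorem~\ref{thm3.6}, which is precisely where consistency of $G$ enters): it yields $G\lambda\ge\liminf_n G\lambda_n$ q.e. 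Since $G\lambda_n\ge f$ q.e.\ for every $n$ and $c^*$ is countably subadditive, the exceptional sets accumulate to a set of zero outer capacity, so $\liminf_n G\lambda_n\ge f$ q.e., whence $G\lambda\ge f$ q.e.\ and $\lambda\in\Gamma^*(f)$.

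It then remains only to pin down the value. By the vague lower semicontinuity of the energy one has $\|\lambda\|^2\le\liminf_n\|\lambda_n\|^2=\gamma^*(f)^2$ along a subnet realizing the cluster point, so $\|\lambda\|\le\gamma^*(f)$; combined with $\lambda\in\Gamma^*(f)$ and the definition of $\gamma^*(f)$ as the infimum of $\|\cdot\|$ over $\Gamma^*(f)$, this forces $\|\lambda\|=\gamma^*(f)$. Thus the infimum defining $\gamma^*(f)$ is attained, completing the proof. The entire difficulty is concentrated in keeping the q.e.\ domination under the vague limit, and this is resolved not by any naive closedness of $\Gamma^*(f)$ but by invoking the consistency‑based Theorem~\ref{thm3.6}.
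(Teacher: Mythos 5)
Your treatment of $\gamma^*(f)$ is correct and is essentially the paper's own argument: minimizing sequence, vague cluster point via Remark~\ref{remark2.0}, the Convergence Theorem~\ref{thm3.6} (where consistency enters) to keep $G\lambda\ge f$ q.e.\ under the limit, countable subadditivity of $c^*$ to absorb the exceptional sets, and vague lower semicontinuity of the energy to pin down $\|\lambda\|=\gamma^*(f)$. You correctly identified that the q.e.\ feasibility is the crux there and that no naive closedness of $\Gamma^*(f)$ is available.

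The $\gamma_*$ half, however, contains a genuine gap, and it is located exactly where you declared the problem easy. You claim that each set $\bigl\{\lambda\in\cal E^+:\ \int G\lambda\,d\nu\ge\int_*f\,d\nu\bigr\}$ is vaguely closed because it is a superlevel set of the vaguely l.s.c.\ functional $\lambda\mapsto\int G\lambda\,d\nu$. But lower semicontinuity gives closed \emph{sublevel} sets $\{g\le t\}$, not closed superlevel sets: $\{g\ge t\}=\bigcap_{s<t}\{g>s\}$ is an intersection of open sets and is in general not closed (take $g=1$ off a point and $g=0$ at it). Concretely, if $\lambda_\alpha\to\lambda_0$ vaguely with $\int G\lambda_\alpha\,d\nu\ge\int_*f\,d\nu$, lower semicontinuity only yields $\int G\lambda_0\,d\nu\le\liminf_\alpha\int G\lambda_\alpha\,d\nu$, which is the wrong direction: the value at the limit measure may drop below the threshold. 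Closedness of superlevel sets requires \emph{upper} semicontinuity, and that is precisely what consistency supplies: by Definition~\ref{def3.5}~(ii), on each energy ball $a\cal E^+_1$ the functional $\lambda\mapsto\int G\lambda\,d\nu$ is finite valued and vaguely \emph{continuous}. This is how the paper proceeds: for $a>\gamma_*(f)$ it shows that $a\cal E^+_1\cap\Gamma_*(f)$ is vaguely closed (using this continuity along a convergent net), hence vaguely compact along with $a\cal E^+_1$, and then minimizes the vaguely l.s.c.\ norm there, noting $\|\lambda\|>a$ on $\Gamma_*(f)\setminus a\cal E^+_1$. A telltale sign of the error is that your argument for $\gamma_*$ never uses consistency, so it would prove attainment for an arbitrary kernel, whereas the theorem (and the paper's proof) invokes consistency in both halves. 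The compactness-and-minimization scaffolding of your argument is fine; only the closedness step must be repaired as above.
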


\begin{proof} For the case of $\gamma^*(f)$ choose a sequence of measures $\lambda_n\in\Gamma^*(f)$ so that $\|\lambda_n\|\to\gamma^*(f)$, and denote by $\lambda$ a vague cluster point for the vaguely bounded sequence $(\lambda_n)$, cf.\ Remark \ref{remark2.0}. By application of the above convergence theorem we have $\lambda\in\Gamma^*(f)$. Moreover, $\|\lambda\|\le\lim_n\|\lambda_n\|=\gamma^*(f)$, and so indeed $\|\lambda\|=\gamma^*(f)$.

For the case of $\gamma_*(f)$ let $a>\gamma_*(f)$. Then $(a\cal E_1^+)\cap\Gamma_*(f)\neq\varnothing$. The consistency of $G$ means that the mapping $\lambda\mapsto\int G\nu\,d\lambda=\int G\lambda\,d\nu$ of $a\cal E_1^+$ into $[0,+\infty]$ is finite valued and vaguely continuous for every $\nu\in\cal E^+$. Consider any vaguely convergent net $(\lambda_\alpha)$ on  $a\cal E_1^+\cap\Gamma_*(f)$ and denote by $\lambda_0$ its vague limit. Then $\|\lambda_0\|\le\liminf_\alpha\|\lambda_\alpha\|\le a$. Using the second expression for $\Gamma_*(f)$ in its definition we have $\int G\lambda_\alpha\,d\nu\ge\int_*f\,d\nu$ for any $\nu\in\cal E^+$, and hence by the stated continuity
$$
\int G\lambda_0\,d\nu=\lim_\alpha\,\int G\lambda_\alpha\,d\nu\ge\int_*f\,d\nu.
$$
Thus $\lambda_0\in\Gamma_*(f)$ and indeed $\lambda_0\in a\cal E_1^+\cap\Gamma_*(f)$
because $\|\lambda_0\|\le a$. It follows that $a\cal E_1^+\cap\Gamma_*(f)$ is vaguely closed, and in fact vaguely compact along with $a\cal E^+_1$, cf.\ Remark \ref{remark2.0}. The vaguely l.s.c.\ function $\lambda\mapsto\|\lambda\|$ therefore attains its infimum $\le\|\lambda_0\|$ when considered on $a\cal E_1^+\cap\Gamma_*(f)$. That infimum is also the infimum of $\|\lambda\|$ considered on all of $\Gamma_*(f)$ because $\|\lambda\|>a\ge\|\lambda_0\|$ on $\Gamma_*(f)\setminus a\cal E^+_1$.
\end{proof}

\begin{remark}\label{remark3.9} Suppose that $G$ is consistent. For any $f\in\cal F^+$ with $\gamma^*(f)<+\infty$, resp.\ $\gamma_*(f)<+\infty$, we denote by
\begin{align*}
\Lambda^*(f)&=\bigl\{\lambda\in\Gamma^*(f):\ \|\lambda\|=\gamma^*(f)\bigr\},\\
\Lambda_*(f)&=\bigl\{\lambda\in\Gamma_*(f):\ \|\lambda\|=\gamma_*(f)\bigr\},
\end{align*}
the nonvoid equivalence class of measures of minimal energy in the convex subset $\Gamma^*(f)$, resp.\ $\Gamma_*(f)$, of $\cal E^+$, cf.\ \cite[Lemma 4.1.1]{Fu1}. We may write $\Lambda(f)$ in place of $\Lambda^*(f)$ or $\Lambda_*(f)$ if these are equal.
\end{remark}
\begin{theorem}\label{thm3.10} Suppose that $G$ is consistent. The upper dual energy capacity $\gamma^*$ is sequentially order continuous from below, that is,
$$
\gamma^*(\sup_n\, f_n)=\sup_n\,\gamma^*(f_n)
$$
for any increasing sequence of functions $f_n\in\cal F^+$. The lower dual energy capacity $\gamma_*$ is sequentially order continuous from below on $\cal E^+$-measurable functions, that is:
$$
\gamma_*(\sup_n\, f_n)=\sup_n\,\gamma_*(f_n)
$$
for any increasing sequence of $\cal E^+$-measurable functions $f_n\in\cal F^+$.
\end{theorem}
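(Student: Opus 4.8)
The plan is to prove each equality by two inequalities, where one direction is immediate. For both functionals the monotonicity recorded just after Definition~\ref{def3.1} gives $\sup_n\gamma^*(f_n)\le\gamma^*(\sup_n f_n)$, and likewise for $\gamma_*$, since each $f_n\le f:=\sup_n f_n$ (here even pointwise). Hence the whole content lies in the reverse inequalities, which are trivial unless the relevant supremum is finite; so I set $a:=\sup_n\gamma^*(f_n)$ (resp.\ $a:=\sup_n\gamma_*(f_n)$) and assume $a<+\infty$. By Theorem~\ref{thm3.7} the defining infima are then attained, so for each $n$ I may choose $\lambda_n\in\Gamma^*(f_n)$ (resp.\ $\lambda_n\in\Gamma_*(f_n)$) with $\|\lambda_n\|=\gamma^*(f_n)\le a$ (resp.\ $\le a$). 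The sequence $(\lambda_n)$ lies in the vaguely compact set $a\cal E^+_1$ (Remark~\ref{remark2.0}), hence has a vague cluster point $\lambda\in a\cal E^+_1$, so that $\|\lambda\|\le a$. It then suffices to show $\lambda\in\Gamma^*(f)$ (resp.\ $\lambda\in\Gamma_*(f)$), for this gives $\gamma^*(f)\le\|\lambda\|\le a$ (resp.\ $\gamma_*(f)\le\|\lambda\|\le a$), as required.

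For the upper capacity I would invoke the convergence theorem. Since $\lambda_n\in\Gamma^*(f_n)$ means $G\lambda_n\ge f_n$ q.e., and $(f_n)$ is increasing, for each fixed $m$ I have $G\lambda_n\ge f_m$ q.e.\ for all $n\ge m$; gluing these countably many exceptional sets by the countable subadditivity of $c^*$ yields $\inf_{n\ge m}G\lambda_n\ge f_m$ q.e., and then, gluing over $m$ in the same way, $\liminf_n G\lambda_n\ge\sup_m f_m=f$ q.e. Applying Theorem~\ref{thm3.6} to the norm-bounded sequence $(\lambda_n)$ and its cluster point $\lambda$ gives $G\lambda\ge\liminf_n G\lambda_n\ge f$ q.e., that is $\lambda\in\Gamma^*(f)$. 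This settles the statement for $\gamma^*$.

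For the lower capacity the convergence theorem is not directly usable: it produces a q.e.\ bound on $G\lambda$, whereas membership in $\Gamma_*$ is an n.e.\ condition, and, in the absence of countable subadditivity of $c_*$, one cannot freely assemble the countably many n.e.\ inequalities $G\lambda_n\ge f_n$ into a single n.e.\ bound. This is exactly why the hypothesis that the $f_n$ be $\cal E^+$-measurable enters, and my plan is instead to route everything through the integral form of $\Gamma_*$. Fix $\nu\in\cal E^+$. Measurability of $f_n$ turns the lower integral into an ordinary one, so $\lambda_n\in\Gamma_*(f_n)$ gives $\int G\nu\,d\lambda_n=\int G\lambda_n\,d\nu\ge\int f_n\,d\nu$. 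By consistency (Definition~\ref{def3.5}(ii)) the map $\mu\mapsto\int G\nu\,d\mu$ is vaguely continuous on $a\cal E^+_1$, so along a subnet of $(\lambda_n)$ converging to $\lambda$ the left-hand sides tend to $\int G\nu\,d\lambda$; on the right-hand sides, monotone convergence (using measurability again) gives $\int f_n\,d\nu\uparrow\int f\,d\nu$. Passing to the limit yields $\int G\lambda\,d\nu\ge\int f\,d\nu=\int_* f\,d\nu$ for every $\nu\in\cal E^+$, that is $\lambda\in\Gamma_*(f)$, completing the proof.

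I expect the main obstacle to be organizational rather than computational, namely recognizing that the two halves require genuinely different mechanisms. The upper case hinges on the convergence theorem to transfer a q.e.\ lower bound on $\liminf_n G\lambda_n$ to the potential of the cluster point, together with the countable subadditivity of $c^*$ to glue exceptional sets; the lower case must instead pass entirely through the integral inequality, exploiting $\cal E^+$-measurability and the continuity built into consistency, precisely because $c_*$ offers no such gluing. A minor technical point to handle with care is that a vague cluster point supplies only a convergent subnet, so the monotone-convergence step in the lower case must be phrased for that subnet; since its indices are nonetheless cofinal in $\NN$, the increasing quantities $\int f_n\,d\nu$ still converge to $\int f\,d\nu$ along it.
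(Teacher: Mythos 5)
Your proof is correct and takes essentially the same route as the paper: attainment of the infima via Theorem \ref{thm3.7}, a vague cluster point of the norm-bounded sequence $(\lambda_n)$ in $a\cal E^+_1$, and the convergence theorem (Theorem \ref{thm3.6}) together with countable subadditivity of $c^*$ for the $\gamma^*$ case. For $\gamma_*$ the paper only says the proof is ``similar \dots\ invoking the measurability requirement,'' and your argument through the integral representation of $\Gamma_*$ (vague continuity of $\mu\mapsto\int G\nu\,d\mu$ from consistency, plus monotone convergence along the subnet) is exactly the intended unpacking, matching how the paper handles $\gamma_*$ in the proofs of Theorems \ref{thm3.7} and \ref{thm3.12}.
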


\begin{proof} Consider first the case of $\gamma^*$ and write $f=\sup_n\,f_n$. In proving the non-trivial inequality $\gamma^*(f)\le\sup_n\,\gamma^*(f_n)$ we may suppose that the sequence $(\gamma^*(f_n))$ is bounded. By Theorem \ref{thm3.7} there exists $\lambda_n\in\Gamma^*(f_n)$ such that $\|\lambda_n\|=\gamma^*(f_n)$. By application of the convergence theorem (Theorem \ref{thm3.6}) we obtain a vague cluster point $\lambda\in\cal E^+$ of the sequence $(\lambda_n)$ such that
$$
G\lambda\ge\liminf_n\,G\lambda_n\ge\liminf_nf_n=f\text{\ q.e.},
$$
that is, $\lambda\in\Gamma^*(f)$. It follows that
$$
\gamma^*(f)\le\|\lambda\|\le\limsup_n\,\|\lambda_n\|
=\limsup_n\,\gamma^*(f_n)=\sup_n\,\gamma^*(f_n).
$$
The proof is similar in the case of $\gamma_*$, now with quasi-everywhere replaced by nearly everywhere while invoking the measurability requirement, cf. the last representation of $\Gamma^*(f)$ in the beginning of the present section.
\end{proof}

\subsection{Positive definite kernel.}

\begin{definition}\label{def3.4} A (symmetric) kernel $G:X\times X\to[0,+\infty]$ is \textit{positive definite\/} if the following three equivalent conditions are fulfilled for every $\mu,\nu\in\cal E^+$:
\begin{itemize}
\item[\rm(i)] $\int G\mu\,d\nu\le\|\mu\|\|\nu\|$,
\item[\rm(ii)] $2\int G\mu\,d\nu\le\int G\mu\,d\mu+\int G\nu\,d\nu$,
\item[\rm(iii)] $c(G\mu)\le\|\mu\|$ (hence actually $c(G\mu)=\|\mu\|$).
\end{itemize}
\end{definition}
Note that (i) obviously implies (ii). The converse (the Cauchy-Schwarz inequality) is of course obtained by replacing $\mu$ with $t\mu$ and $\nu$ with $t^{-1}\nu$, $0<t<+\infty$, and next minimizing over $t$. In (iii), $G\mu\in\cal G$ is $c$-capacitable, and the opposite inequality in (iii) holds for every symmetric kernel $G$ because $c(G\mu)\ge\int G\mu\,d(\mu/\|\mu\|)=\|\mu\|$ if $\|\mu\|>0$, noting that then $\mu/\|\mu\|\in\cal E^+_1$.  From (i) applied with $\nu\in\cal E^+_1$ follows by (\ref{2.2}) the stated inequality in (iii):  $c(G\mu)=c_*(G\mu)\le\|\mu\|$. Conversely, from the stated inequality in (iii) it follows that $\int G\mu\,d(\nu/\|\nu\|)\le c(G\mu)=\|\mu\|$, which implies (i) since we may suppose in (i) that $\nu\ne 0$ and hence $\|\nu\|\ne0$, cf.\ Remark \ref{remark2.0}.

In either condition (i) or (ii) it suffices to consider measures $\mu,\nu\in\cal E^+$ whose supports are compact and disjoint, \cite[Lemme 1]{Ch3}.

If $G$ is positive definite we denote by $\cal E$ the prehilbert space of all signed (Radon) measures $\mu$ on $X$ for which $\int G\mu\,d\mu<+\infty$, taking for inner product of measures $\mu,\nu\in\cal E$ the mutual energy $\int G\mu\,d\nu$ with $G\mu$ defined $|\mu|$-a.e.\ by the same formula as in case $\mu\ge0$, whereby  $G\mu$ becomes $\mu$-integrable. We keep the notation $\|\mu\|$ for the corresponding (energy) seminorm on $\cal E$.

A positive definite kernel $G$ is said to be \textit{strictly positive definite} (or to satisfy the \textit{energy principle}) if, for any $\mu\in\cal E$, $\|\mu\|=0$ implies $\mu=0$, or equivalently: if $G\mu=0$ $\mu$-a.e.\ implies $\mu=0$ (but here it may no longer be sufficient to consider measures $\mu\in\cal E$ of compact support). Thus a positive definite kernel $G$ is strictly positive definite if and only if the energy seminorm $\|\cdot\|$ is a norm. In the affirmative case the energy norm topology on the prehilbert space $\cal E$ (and its induced topology on $\cal E^+$) is also called the {\textit{strong topology}}.

Our standard hypothesis that $G$ be strictly positive on the diagonal is automatically fulfilled if $G$ is strictly positive definite. In fact, if $G(x,x)=0$ for some $x\in X$ then $\eps_x$ has finite energy $\int G\eps_x\,d\eps_x=G(x,x)=0$ and so $\eps_x=0$, which is impossible.

As mentioned in Section 1, consistency of a strictly positive definite kernel amounts to $\cal E^+$ being {\it strongly complete} and such that the strong topology on $\cal E^+$ is finer than the induced vague topology, \cite[Section 3.3]{Fu1}.

\begin{lemma}\label{lemma3.5} Each of the following inequalities holds if and only if $G$ is positive definite:
\begin{itemize}
\item[\rm(i)] $c(f)\le\gamma(f)$ for every $f\in\cal H_0$ (hence actually $c(f)=\gamma(f)$ even for $f\in\cal H_0^*$ in view of Lemma\/~{\rm\ref{lemma3.2}}),
\item[\rm(ii)] $c^*(f)\le \gamma^*(f)$ for every $f\in\cal F^+$,
\item[\rm(iii)] $c_*(f)\le \gamma_*(f)$ for every $f\in\cal F^+$.
\end{itemize}
\end{lemma}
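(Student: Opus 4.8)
The plan is to reduce all three equivalences to condition~(iii) of Definition~\ref{def3.4}: $G$ is positive definite if and only if $c(G\mu)\le\|\mu\|$ for every $\mu\in\cal E^+$ (the reverse inequality $c(G\mu)\ge\|\mu\|$ always holds for symmetric $G$, as noted after Definition~\ref{def3.4}). So for each of (i)--(iii) I would establish the two implications separately: assuming $G$ positive definite, derive the stated inequality; and conversely, apply the stated inequality to potentials $G\mu$ to recover $c(G\mu)\le\|\mu\|$ for all $\mu$.

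For the converse directions (``inequality $\Rightarrow$ positive definite'') I would feed $f=G\mu$ into the hypothesis and invoke Lemma~\ref{lemma3.3}, valid for any symmetric kernel, which gives $\gamma^*(G\mu)=\gamma_*(G\mu)=\|\mu\|$; since $G\mu\in\cal G$ is $c$-capacitable we have $c(G\mu)=c^*(G\mu)=c_*(G\mu)$. Thus (ii) yields $c(G\mu)=c^*(G\mu)\le\gamma^*(G\mu)=\|\mu\|$ and (iii) yields $c(G\mu)=c_*(G\mu)\le\gamma_*(G\mu)=\|\mu\|$, so in either case $G$ is positive definite. The genuine obstacle is statement~(i): its hypothesis only concerns $f\in\cal H_0$, whereas $G\mu$ lies in $\cal G$ and in general not in $\cal H_0$ (that would require consistency), so (i) cannot be applied to $G\mu$ directly. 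I would resolve this by approximating $G\mu$ from below: by capacitability of $G\mu\in\cal G$ and the latter formula in (\ref{2.2}), $c(G\mu)=c_*(G\mu)=\sup\{c(h):\ h\in\cal H_0,\ h\le G\mu\}$. For each such $h$ the measure $\mu$ itself lies in $\Gamma^*(h)=\Gamma(h)$, since $G\mu\ge h$ everywhere (in particular q.e.), whence $\gamma(h)\le\|\mu\|$; then (i) gives $c(h)\le\gamma(h)\le\|\mu\|$, and passing to the supremum over $h$ yields $c(G\mu)\le\|\mu\|$.

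For the forward implications (``positive definite $\Rightarrow$ inequality'') I would argue straight from the definitions of the dual capacities. For (ii), any $\lambda\in\Gamma^*(f)$ satisfies $f\le G\lambda$ q.e.; as $c^*$ is increasing and invariant under q.e.\ modification and $G\lambda\in\cal G$ is capacitable, $c^*(f)\le c^*(G\lambda)=c(G\lambda)\le\|\lambda\|$ by Definition~\ref{def3.4}(iii), and the infimum over $\lambda\in\Gamma^*(f)$ gives $c^*(f)\le\gamma^*(f)$. For (iii), any $\lambda\in\Gamma_*(f)$ satisfies $\int G\lambda\,d\nu\ge\int_*f\,d\nu$ for all $\nu\in\cal E^+$; taking $\nu\in\cal E^+_1$ and using Definition~\ref{def3.4}(i), $\int_*f\,d\nu\le\int G\lambda\,d\nu=\int G\nu\,d\lambda\le\|\nu\|\,\|\lambda\|\le\|\lambda\|$, so the supremum over $\cal E^+_1$ in the latter formula (\ref{2.2}) gives $c_*(f)\le\|\lambda\|$ and then $c_*(f)\le\gamma_*(f)$. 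Statement~(i) then follows from (ii) restricted to $\cal H_0$, where $c=c^*$ and $\gamma=\gamma^*$ by Lemma~\ref{lemma3.2}; combined with the reverse inequality $\gamma\le c$ on $\cal H_0^*$ from Lemma~\ref{lemma3.2}, this also gives the asserted equality $c(f)=\gamma(f)$ for every $f\in\cal H_0^*$. The only non-routine point throughout is the converse of~(i), where the failure of $G\mu$ to lie in $\cal H_0$ forces the lower-approximation argument above rather than a direct substitution.
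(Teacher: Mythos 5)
Your proof is correct and follows essentially the same route as the paper's: the forward implications via $\lambda\in\Gamma^*(f)$, resp.\ $\Gamma_*(f)$, together with $c(G\lambda)\le\|\lambda\|$, the deduction of (i) from (ii) via Lemma~\ref{lemma3.2}, and--crucially--the identical lower-approximation argument $c(G\mu)=\sup\{c(h):h\in\cal H_0,\ h\le G\mu\}\le\|\mu\|$ for the converse of (i). Your only deviations are harmless embellishments: you prove the converses of (ii) and (iii) directly by feeding $f=G\mu$ into Lemma~\ref{lemma3.3} (the paper instead routes everything through (i)), and for the forward direction of (iii) you use the Cauchy--Schwarz form of Definition~\ref{def3.4}\,(i) with the integral formula in (\ref{2.2}) where the paper cites \cite[Theorem 7.4]{Fu4} and Definition~\ref{def3.4}\,(iii).
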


\begin{proof} Recall from Lemma \ref{lemma3.2} that $c(f)\ge\gamma(f)$ for any $f\in\cal H^*_0$. If $G$ is positive definite we have for any $\lambda\in\Gamma_*(f)$ the 
equality $c_*(f-G\lambda)^+=0$ and hence the inequalities $c_*(f)\le c(G\lambda)\le \|\lambda\|$, invoking also \cite[Theorem 7.4]{Fu4} and Definition \ref{def3.4} (iii). This establishes (iii) in the present lemma, and (ii) is obtained similarly, invoking now \cite[Lemma 1.3 (b)]{Fu4}. Conversely, (ii) or (iii) implies (i) in view of Lemma \ref{lemma3.2}. Finally, (i) implies that $G$ is positive definite. In fact, for any $\lambda\in\cal E^+$, we have by (\ref{2.2})
$$
c(G\lambda)=\sup\,\bigl\{c(h):\ h\in\cal H_0,\ h\le G\lambda\bigr\}\le \|\lambda\|
$$
because $c(h)\le\gamma(h)\le \|\lambda\|$ for every competing $h$.
\end{proof}

We now justify the terms lower and upper dual capacity (cf.\ (\ref{2.2}) and (\ref{2.3})):

\begin{theorem}\label{thm3.12} Suppose that $G$ is consistent and positive definite. For any $f\in\cal F^+$ we have
\begin{align}
\gamma_*(f)&=\sup\,\bigl\{\gamma(h):\ h\in\cal H_0,\ h\le f\bigr\}\le c_*(f),\label{3.1a}\\
\gamma^*(f)&=\inf\,\bigl\{\gamma(g):\ g\in\cal G,\ g\ge f\bigr\}\le c^*(f),
\label{3.1c}\\
\gamma^*(f)&=\min\,\bigl\{\gamma(g):\ g
\in\cal G,\ g\ge f\text{\ q.e.}\bigr\}.
\label{3.1b}
\end{align}
\end{theorem}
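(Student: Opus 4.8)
The plan is to establish the three identities in the order (\ref{3.1a}), (\ref{3.1b}), (\ref{3.1c}), with (\ref{3.1a}) carrying essentially all the work; the other two then follow quickly from Lemmas \ref{lemma3.3} and \ref{lemma3.5}, Remark \ref{remark3.8a}, and Theorem \ref{thm3.7}. In each identity one inequality is immediate. For (\ref{3.1a}), any $h\in\cal H_0$ with $h\le f$ gives $\gamma(h)=\gamma_*(h)\le\gamma_*(f)$ by Lemma \ref{lemma3.2} and the monotonicity of $\gamma_*$, and also $\gamma(h)\le c(h)\le c_*(f)$ by Lemma \ref{lemma3.2} and (\ref{2.2}); hence $\sup\{\gamma(h):h\le f\}\le c_*(f)$, and the whole of (\ref{3.1a}) reduces to the single reverse inequality $\gamma_*(f)\le s:=\sup\{\gamma(h):h\in\cal H_0,\ h\le f\}$, which is the converse of Lemma \ref{lemma3.5}(iii). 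I may assume $s<+\infty$, the case $s=+\infty$ being trivial.

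To prove $\gamma_*(f)\le s$ I would first rewrite the constraint set. By inner regularity, $\int_*f\,d\nu=\sup\{\int h\,d\nu:h\in\cal H_0,\ h\le f\}$ for each $\nu\in\cal E^+$, so the second description of $\Gamma_*$ in its definition yields $\Gamma_*(f)=\bigcap_{h\le f}\Gamma_*(h)=\bigcap_{h\le f}\Gamma^*(h)$, the last step by Lemma \ref{lemma3.2}. For each admissible $h$ pick a capacitary measure $\mu_h\in{\rm M}(h)$; then $\mu_h\in\Gamma^*(h)$ by Theorem \ref{thm2.1}(a) and $\|\mu_h\|=c(h)=\gamma(h)=\gamma^*(h)$ by Lemma \ref{lemma3.5}(i) and Theorem \ref{thm2.1}(c), so $\mu_h$ realizes the least energy in $\Gamma^*(h)$ (Theorem \ref{thm3.7}). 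The family $\{\mu_h\}$, indexed by the admissible $h$ directed upward by $\max$, has energies bounded by $s$, and the crucial point is that it is strongly Cauchy: if $h\le h'$ then $\Gamma^*(h)\supset\Gamma^*(h')$, so $\mu_{h'}$ and hence $\tfrac12(\mu_h+\mu_{h'})$ lie in $\Gamma^*(h)$; minimality gives $\|\tfrac12(\mu_h+\mu_{h'})\|\ge\|\mu_h\|$, and the parallelogram law in the prehilbert space $\cal E$ yields $\|\mu_h-\mu_{h'}\|^2\le2\|\mu_{h'}\|^2-2\|\mu_h\|^2$. Since $\|\mu_h\|^2=\gamma(h)^2$ increases to $s^2$, comparing arbitrary $h,h'$ through $h\vee h'$ shows $\|\mu_h-\mu_{h'}\|\to0$ along the directed set.

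Here consistency enters decisively. Because consistency makes the strong topology on $\cal E^+$ complete and finer than the vague one, the strongly Cauchy net $\{\mu_h\}$ converges strongly, hence vaguely, to some $\lambda\in\cal E^+$ with $\|\lambda\|=\sup_h\|\mu_h\|=s$. For each fixed $h_0$ one has $\mu_h\in\Gamma^*(h_0)$ whenever $h\ge h_0$, and $\Gamma^*(h_0)$ is strongly closed, since strong convergence entails vague convergence and the convergence theorem (Theorem \ref{thm3.6}) then places the vague limit back in $\Gamma^*(h_0)$. Thus $\lambda\in\bigcap_{h_0}\Gamma^*(h_0)=\Gamma_*(f)$, whence $\gamma_*(f)\le\|\lambda\|=s$ and (\ref{3.1a}) is proved. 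I expect this step---manufacturing one measure that lies in \emph{every} $\Gamma^*(h)$ at once---to be the main obstacle, precisely because no countable subfamily of the $h\le f$ need exhaust $f$; the sequential convergence theorem alone is therefore inadequate, and one must rely on the completeness built into consistency to pass to the limit of the full directed family.

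Finally I would bootstrap. For (\ref{3.1b}), monotonicity of $\gamma^*$ and Remark \ref{remark3.8a} give $\gamma^*(f)\le\gamma(g)$ for every $g\in\cal G$ with $g\ge f$ q.e.; and when $\gamma^*(f)<+\infty$, Theorem \ref{thm3.7} furnishes $\lambda_0\in\Gamma^*(f)$ with $\|\lambda_0\|=\gamma^*(f)$, so $g_0:=G\lambda_0\in\cal G$ satisfies $g_0\ge f$ q.e. and, by Lemma \ref{lemma3.3} and Remark \ref{remark3.8a}, $\gamma(g_0)=\gamma^*(G\lambda_0)=\|\lambda_0\|=\gamma^*(f)$; hence the minimum is attained and equals $\gamma^*(f)$ (the case $\gamma^*(f)=+\infty$ being trivial). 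For (\ref{3.1c}), the same two facts give $\gamma^*(f)\le\inf\{\gamma(g):g\in\cal G,\ g\ge f\}$, while conversely each such $g$ lies in $\cal G^*$ and is $c$-capacitable, so $\gamma(g)=\gamma_*(g)\le c_*(g)=c(g)$ by Remark \ref{remark3.8a} and (\ref{3.1a}); taking the infimum gives $\inf\{\gamma(g):g\ge f\}\le c^*(f)$, and combining with $c^*(f)\le\gamma^*(f)$ from Lemma \ref{lemma3.5}(ii) forces equality throughout, establishing (\ref{3.1c}).
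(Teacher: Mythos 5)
Your overall architecture is sound, and your treatments of (\ref{3.1b}) and (\ref{3.1c}) are correct. For (\ref{3.1c}) you take a genuinely different and shorter route than the paper: you sandwich $c^*(f)\le\gamma^*(f)\le\inf\{\gamma(g):g\in\cal G,\ g\ge f\}\le c^*(f)$ using Lemma \ref{lemma3.5}(ii) together with (\ref{3.1a}), whereas the paper exhibits near-minimizing everywhere-majorants $g=G\lambda+g_\eps$ with $\lambda\in\Lambda^*(f)$ and $c(g_\eps)<\eps$, and invokes subadditivity of $\gamma^*$. Your version is legitimate (Lemma \ref{lemma3.5} is proved independently of Theorem \ref{thm3.12}, so there is no circularity with Corollary \ref{cor3.13}) and buys brevity; the paper's construction is what one would need to prove (\ref{3.1c}) without already having $c^*\le\gamma^*$ on all of $\cal F^+$. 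For (\ref{3.1a}) you also diverge: you run the Cartan-style projection argument (convexity of $\Gamma^*(h)$, parallelogram law on the directed family of minimizers $\mu_h$, strong Cauchy net, completeness), whereas the paper takes $\lambda_h\in\Lambda_*(h)$, extracts a vague cluster point of the net from Remark \ref{remark2.0} alone, and passes to the limit in the mutual energies via the vague continuity built into Definition \ref{def3.5}(ii) --- no completeness at all. (Your opening identity $\int_*f\,d\nu=\sup\{\int h\,d\nu:h\in\cal H_0,\ h\le f\}$ is exactly the step the paper also takes, so that reduction is fine, as is the parallelogram/monotonicity computation making the net strongly Cauchy.)

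Your limit step for (\ref{3.1a}), however, has two concrete soft spots. First, ``strong convergence entails vague convergence'' is, as the paper itself notes (citing \cite[Section 3.3]{Fu1}), the characterization of consistency for \emph{strictly} positive definite kernels, while Theorem \ref{thm3.12} assumes only positive (semi)definiteness; for a semidefinite kernel the energy seminorm topology is non-Hausdorff and cannot be finer than the (Hausdorff) vague topology, so this implication fails. What survives in the semidefinite case is property (C) of \cite{Fu1}: your strong Cauchy net has, by Remark \ref{remark2.0}, a vague cluster point $\lambda$, and consistency then makes the net converge strongly to that $\lambda$ --- this furnishes the limit measure, but not by your stated route. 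Second, Theorem \ref{thm3.6} is stated and proved (through countable subadditivity of $c^*$) for \emph{sequences}, and you apply it to a net; you would at least need a cofinal sequential extraction from the Cauchy net. Both defects are repaired at one stroke by replacing the appeal to Theorem \ref{thm3.6} with Cauchy--Schwarz in the prehilbert space $\cal E$: $\bigl|\int G(\mu_h-\lambda)\,d\nu\bigr|\le\|\mu_h-\lambda\|\,\|\nu\|\to0$ for each $\nu\in\cal E^+$, whence $\int G\lambda\,d\nu\ge\int h_0\,d\nu$ for all $\nu\in\cal E^+$, that is, $\lambda\in\Gamma_*(h_0)=\Gamma^*(h_0)$ by Lemma \ref{lemma3.2}, and intersecting over $h_0$ gives $\lambda\in\Gamma_*(f)$ as you intended. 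With that substitution your proof closes; still, note that the paper's argument is leaner, since vague compactness plus Definition \ref{def3.5}(ii) suffice and the completeness machinery you import from \cite{Fu1} is avoidable.
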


\begin{proof} Consider first the case of $\gamma_*(f)$. Recall from Remark \ref{remark3.8a} that $\gamma^*(g)=\gamma_*(g)$ for any $g\in\cal G$ (or just $g\in\cal G^*$).
Since $\cal H_0$ is stable under supremum of finite families, in particular upper directed, so is $H:=\{h\in\cal H_0:\ h\le f\}$. For any $h\in H$ there exists by Theorem \ref{thm3.7} $\lambda_h\in\Lambda_*(h)$. We may assume that the numbers $\gamma(h)=\|\lambda_h\|$ remain bounded, say $\le a$, for $h\in H$. By Remark \ref{remark2.0} the net $(\lambda_h)_{h\in H}$ has a vague cluster point $\lambda\in a\cal E^+_1$. As at the end of the proof of Theorem \ref{thm3.6} we invoke the vague continuity of the function $\lambda\mapsto\int G\nu\,d\lambda$ on $a\cal E^+$ for any $\nu\in\cal E^+$. It follows that
$$
\int G\lambda\,d\nu\ge\liminf_{h\in H}\,\int G\lambda_h\,d\nu\ge\lim_{h\in H}\,\int h\,d\nu=\int_*f\,d\nu,
$$
where the latter inequality follows from $\lambda_h\in\Lambda_*(h)\subset\Gamma_*(h)$, hence $\int G\lambda_h\,d\nu\ge\int h\,d\nu$ (also note that the net $(\int h\,d\nu)_{h\in H}$ is increasing). This shows that $\lambda\in\Gamma_*(f)$. By the vague lower semicontinuity of energy
$$
\|\lambda\|\le\limsup_{h\in H}\,\|\lambda_h\|=\lim_{h\in H}\,\gamma(h)=\sup_{h\in H}\,\gamma(h)<+\infty
$$
and hence $\gamma_*(f)\le\|\lambda\|\le\sup_{h\in H}\,\gamma(h)\le\gamma_*(f)$ ($\le\gamma^*(f)$). This leads to (\ref{3.1a}):
$$
\gamma_*(f)=\sup_{h\in H}\,\gamma(h)\le\sup_{h\in H}\,c(h)=c_*(f)
$$
because $\gamma(h)\le c(h)$ for $h\in\cal H^*_0$ by Lemma \ref{lemma3.2}.

Next, consider (\ref{3.1b}).
We may assume that $\gamma^*(f)<+\infty$. By Theorem \ref{thm3.7}
$$
\gamma^*(f)=\min\,\bigl\{\|\lambda\|:\ \lambda\in\Gamma^*(f)\bigr\}
=\min\,\bigl\{\gamma(G\lambda):\ G\lambda\ge f\text{\ q.e.}\bigr\}
$$
in view of Lemma \ref{lemma3.3} and the definition of $\Gamma^*(f)$. Thus there exists $\lambda\in\cal E^+$ with $G\lambda\ge f$ q.e.\ and $\gamma(G\lambda)=\gamma^*(f)$. For any $g\in\cal G$ with $g\ge f$ q.e.\ we have $\gamma(g)\ge\gamma^*(f)$ because $\gamma^*(f)$ only depends on the $c^*$-equivalence class of $f$. Since $G\lambda\in\cal G$ it follows that
$$
\gamma^*(f)=\gamma(G\lambda)=\inf\,\bigl\{\gamma(g):\ g\in\cal G,\ g\ge f\text{\ q.e.}\bigr\},
$$
and this infimum is attained by $g=G\lambda$, which proves (\ref{3.1b}).

Having fixed $\lambda\in\Lambda^*(f)$, we have $c^*((f-G\lambda)^+)=0$, and therefore for any $\eps>0$ there exists a function $g_\eps\in\cal G$ with $c(g_\eps)<\eps$ such that $(f-G\lambda)^+\le g_\eps$. Writing $g=G\lambda+g_\eps\in\cal G$ we infer that $f\le g$. When $G$ is positive definite the seminorm $\|\cdot\|$ is subadditive, and it easily follows that so is $\gamma^*$. Hence
$$
\gamma(g)\le\gamma(G\lambda)+\gamma(g_\eps)\le\|\lambda\|+c(g_\eps)<\gamma^*(f)+\eps
$$
in view of Lemmas \ref{lemma3.2} and\ref{lemma3.3} because
$\gamma(g_\eps)\le c(g_\eps)$ by
(\ref{3.1a}) and because $\|\lambda\|=\gamma^*(f)$ since $\lambda\in\Lambda^*(f)$. By varying $\eps$ this implies the equality in (\ref{3.1c}). And that equality implies the inequality in (\ref{3.1c}) by inserting $\gamma(g)\le c(g)$ and invoking the definition of upper energy capacity $c^*(f)$.
\end{proof}

\begin{cor}\label{cor3.13} Suppose that $G$ is consistent and positive definite. Then $c^*(f)=\gamma^*(f)$ and  $c_*(f)=\gamma_*(f)$ for any $f\in\cal F^+$. Hence $c^*,c_*:\cal F^+\to[0,+\infty]$ are sequentially order continuous from below (on $\cal E^+$-measurable functions in the case of $c_*$):
$$
c^*(\sup_n\,f_n)=\sup_n\,c^*(f_n),\quad c_*(\sup_n\,f_n)=\sup_n\,c_*(f_n)
$$
for any increasing sequence of functions $f_n\in\cal F^+$ ($\cal E^+$-measurable functions $f_n\in\cal F^+$ in the case of $c_*$).
\end{cor}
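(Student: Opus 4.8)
The plan is to obtain the two equalities $c^*=\gamma^*$ and $c_*=\gamma_*$ by squeezing each pair between complementary inequalities already established, and then to transfer the order-continuity statement from $\gamma^*,\gamma_*$ (Theorem \ref{thm3.10}) to $c^*,c_*$ through these identities. There is no substantial new work to do; the corollary is essentially a bookkeeping combination of Lemma \ref{lemma3.5}, Theorem \ref{thm3.12}, and Theorem \ref{thm3.10}.

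First I would establish $c^*(f)=\gamma^*(f)$ for arbitrary $f\in\cal F^+$. The inequality $c^*(f)\le\gamma^*(f)$ is exactly Lemma \ref{lemma3.5}(ii), valid because $G$ is positive definite. The reverse inequality $\gamma^*(f)\le c^*(f)$ is the inequality asserted in (\ref{3.1c}) of Theorem \ref{thm3.12}, which uses both consistency and positive definiteness. Combining the two gives the equality. In the same way $c_*(f)=\gamma_*(f)$ follows by pairing $c_*(f)\le\gamma_*(f)$ from Lemma \ref{lemma3.5}(iii) with the inequality $\gamma_*(f)\le c_*(f)$ recorded in (\ref{3.1a}). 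I would stress that all four of these inequalities hold for every $f\in\cal F^+$ with no measurability hypothesis, so the two identities $c^*=\gamma^*$ and $c_*=\gamma_*$ are unrestricted.

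Finally, the order-continuity assertions are immediate. Theorem \ref{thm3.10} states that $\gamma^*$ is sequentially order continuous from below along every increasing sequence in $\cal F^+$, and that $\gamma_*$ has the same property along increasing sequences of $\cal E^+$-measurable functions. Substituting $c^*=\gamma^*$ and $c_*=\gamma_*$ into these two statements yields precisely the displayed formulas $c^*(\sup_n f_n)=\sup_n c^*(f_n)$ and $c_*(\sup_n f_n)=\sup_n c_*(f_n)$.

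The one point that needs care — and the only reason the statement for $c_*$ carries the measurability restriction — is that the order continuity of $\gamma_*$ in Theorem \ref{thm3.10} is itself available only for $\cal E^+$-measurable functions, whereas that of $\gamma^*$ holds on all of $\cal F^+$. I expect no genuine obstacle beyond carrying this caveat faithfully through the identity $c_*=\gamma_*$, so that the $c_*$ conclusion is asserted only for increasing sequences of $\cal E^+$-measurable functions while the $c^*$ conclusion is unconditional.
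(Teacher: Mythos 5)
Your proposal is correct and follows essentially the same route as the paper: both squeeze $c_*=\gamma_*$ between Lemma \ref{lemma3.5}(iii) and (\ref{3.1a}), obtain $c^*=\gamma^*$ from (\ref{3.1c}) (you pair it explicitly with Lemma \ref{lemma3.5}(ii), a harmless variant of the paper's phrasing), and then transfer the order continuity from Theorem \ref{thm3.10}. You also carry the $\cal E^+$-measurability caveat for $c_*$ exactly as the paper does, so nothing is missing.
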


The identity $c_*=\gamma_*$ follows by combining the inequality $\gamma_*\le c_*$ in (\ref{3.1a}) (for $G$ consistent) with the inequality $c_*\le\gamma_*$ obtained in Lemma \ref{lemma3.5} (iii) (for $G$ positive definite). The identity $c^*=\gamma^*$ now follows from (\ref{3.1c}). Hence the remaining assertions follow from the corresponding sequential order continuity of $\gamma^*$ and $\gamma_*$ (Theorem \ref{thm3.10}).

For the concepts of \textit{quasi u.s.c.}, resp.\ \textit{quasi l.s.c.}, functions of class $\cal F^+$ we refer to \cite[Section 3]{Fu3}, or see Section \ref{quasi} below. It follows from the definitions that a set $A\subset X$ is quasiclosed, resp.\ quasiopen, if and only if the indicator function $1_A$ is quasi u.s.c., resp.\ quasi l.s.c.

\begin{lemma}\label{lemma3.8} Suppose that $G$ is consistent and positive definite.
\begin{itemize}
\item[\rm(a)] Every function $f\in\cal F^+$ such that $c^*(f)<+\infty$ has a majorant of class $\cal H_0^*$. If moreover $f$ is quasi u.s.c.\ then $f\in\cal H^*_0$.
\item[\rm(b)] Every set $A\subset X$ such that $c^*(A)<+\infty$ is contained in some quasicompact set. If moreover $A$ is quasiclosed then $A$ is quasicompact.
\end{itemize}
\end{lemma}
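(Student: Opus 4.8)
The plan is to treat both parts uniformly: first produce a potential that dominates the datum quasi-everywhere, then repair it on the exceptional (capacity-zero) set by adding a function of class $\cal H_0^*$ that is identically $+\infty$ there. The external input I rely on is \cite[Theorem 2.5 (b)]{Fu4}, read as: every function of class $\cal H_0^*$ is quasi u.s.c., and conversely a nonnegative quasi u.s.c.\ function majorized by a function of class $\cal H_0^*$ is itself of class $\cal H_0^*$; I also use that the sets $B$ with $1_B\in\cal H_0^*$ are precisely the quasicompact sets (\cite[Lemma 2.4]{Fu4}).

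For (a), since $G$ is consistent and positive definite, Corollary~\ref{cor3.13} gives $\gamma^*(f)=c^*(f)<+\infty$, so $\Gamma^*(f)\ne\varnothing$ by Definition~\ref{def3.1}; I would fix $\lambda\in\Gamma^*(f)$, that is $G\lambda\ge f$ q.e., and note $G\lambda\in\cal H_0^*$ by consistency (Definition~\ref{def3.5}(i)). Writing $E=\{f>G\lambda\}$ we have $c^*(E)=0$. To upgrade the q.e.-majorant $G\lambda$ to a genuine majorant I would add the function $\infty\cdot1_E$: since $\infty\cdot1_E=\sup_n n1_E$ is an increasing supremum, the sequential order continuity of $c^*$ from below (Corollary~\ref{cor3.13}) yields $c^*(\infty\cdot1_E)=\sup_n c^*(n1_E)=\sup_n n\,c^*(1_E)=0$; hence $\infty\cdot1_E$ is $c^*$-equivalent to $0$ and therefore lies in $\cal H_0^*$. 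Then $h:=G\lambda+\infty\cdot1_E\in\cal H_0^*$ (a convex cone is stable under addition) and $h\ge f$ everywhere, since $h=+\infty$ on $E$ and $h\ge G\lambda\ge f$ off $E$. This settles the first assertion. If moreover $f$ is quasi u.s.c., then $f$ is a quasi u.s.c.\ function dominated by the $\cal H_0^*$-function $h$, so \cite[Theorem 2.5 (b)]{Fu4} gives at once $f\in\cal H_0^*$.

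For (b) I would apply (a) to $f=1_A$, obtaining $\lambda\in\cal E^+$ with $G\lambda\ge1_A$ q.e.\ and $G\lambda\in\cal H_0^*$. Set $B=\{G\lambda\ge1\}$. Exactly as in the proof of Theorem~\ref{thm3.6}, $G\lambda\in\cal H_0^*$ is quasi u.s.c., so its superlevel set $B$ is quasiclosed, and from $1_B\le G\lambda\in\cal H_0^*$ and \cite[Theorem 2.5 (b)]{Fu4} one gets $1_B\in\cal H_0^*$, i.e.\ $B$ is quasicompact. With $E=\{1_A>G\lambda\}$ (so $c^*(E)=0$) every $x\in A\setminus E$ satisfies $G\lambda(x)\ge1$, whence $A\subseteq B\cup E$. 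Because $c^*(E)=0$, $1_E$ is $c^*$-equivalent to $0$, so $E$ is quasicompact; and a finite union of quasicompact sets is again quasicompact, since $1_{B\cup E}=\max(1_B,1_E)\in\cal H_0^*$ ($\cal H_0^*$ being stable under finite maxima). Thus $A$ is contained in the quasicompact set $B\cup E$. Finally, if $A$ is quasiclosed then $1_A$ is quasi u.s.c.\ and $1_A\le1_{B\cup E}\in\cal H_0^*$, so \cite[Theorem 2.5 (b)]{Fu4} yields $1_A\in\cal H_0^*$, i.e.\ $A$ is quasicompact.

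The only genuine obstacle is the passage from a quasi-everywhere majorant to an everywhere majorant in (a). One is tempted to construct an infinite potential $G\sigma$ with $G\sigma=+\infty$ on $E$, but extracting this from $c^*(E)=0$ is delicate: the natural constructions only control $G\sigma$ up to a further set of capacity zero, and this exceptional set keeps reappearing under iteration. I would sidestep the difficulty entirely by the observation that $\infty\cdot1_E$ has zero upper capacity — an immediate consequence of the order continuity in Corollary~\ref{cor3.13} — so it already belongs to $\cal H_0^*$ and may simply be added to $G\lambda$.
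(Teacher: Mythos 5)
Your proof is correct and takes essentially the same route as the paper: both extract $\lambda\in\Gamma^*(f)$ from $\gamma^*(f)=c^*(f)<+\infty$ (Corollary \ref{cor3.13} and Theorem \ref{thm3.7}), use consistency to get $G\lambda\in\cal H_0^*$, repair the quasi-everywhere majorant on the capacity-null set, and conclude via \cite[Theorem 2.5 (b)]{Fu4} and \cite[Lemma 2.4]{Fu4}. The only differences are cosmetic: the paper performs the repair by redefining $G\lambda$ on $\{G\lambda<f\}$ (which stays in the same $c^*$-equivalence class) where you add $\infty\cdot1_E\in\cal H_0^*$, and in (b) the paper takes the level set $\{h\ge1\}$ of the already-repaired majorant so that it contains $A$ outright, where you instead adjoin the null set $E$ to $\{G\lambda\ge1\}$ and use stability of $\cal H_0^*$ under finite maxima.
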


\begin{proof} (a) Since $\gamma^*(f)=c^*(f)<+\infty$ there exists $\lambda\in\Gamma^*(f)$. Thus $G\lambda\ge f$ q.e., and $G\lambda\in\cal H^*_0$ by Definition \ref{def3.5}. By redefining $G\lambda$ on the set $\{G\lambda<f\}$  we obtain the desired majorant of $f$ of class $\cal H^*_0$. If $f$ itself is quasi u.s.c.\ then $f\in\cal H^*_0$
by \cite[Theorem 2.5 (b)]{Fu4}.

(b) Since $c^*(1_A)<+\infty$, $1_A$ has by (a) a majorant $h\in\cal H^*_0$ . Then $h$ is quasi u.s.c., again by \cite[Theorem 2.5 (b)]{Fu4}. It follows by \cite[Lemma 3.3]{Fu3} that the set $H:=\{h\ge1\}$ is quasiclosed (and contains $A$). Hence
the function $1_H$ is quasi u.s.c.\ (and $\le h$). Since $h\in\cal H^*_0$ it follows by the latter assertion (a) that $1_H\in\cal H^*_0$, which by \cite[Lemma 2.4]{Fu4} means that the set $H$ (which contains $A$) is quasicompact. If $A$ itself is quasiclosed then $1_A$ is quasi u.s.c.\ and majorized by $h\in\cal H^*_0$, and it follows by the second assertion (a) that $1_A\in\cal H^*_0$, and again by \cite[Lemma 2.4]{Fu4} that $A$ is quasicompact.
\end{proof}

\subsection{Further properties of measures $\mu\in{\rm{M}}(f)$.} Let us now return to Theorem \ref{thm2.1} and establish further properties of the capacitary measures $\mu\in{\rm{M}}(f)$, now with $G$ consistent and positive definite. We shall repeatedly use the above Lemma \ref{lemma3.8}.

\begin{theorem}\label{thm3.9} Suppose that $G$ is consistent and positive definite. Let $f\in\cal H_0^*$, or equivalently: $f$ is quasi u.s.c.\ with $c^*(f)<+\infty$. The class ${\rm{M}}(f)$ of all capacitary measures for $f$ is a nonvoid vaguely compact convex subset of $\cal E^+$ and consists of all measures of class $\cal E^+$ satisfying {\rm{(a)}} and {\rm{(b)}} in Theorem {\rm\ref{thm2.1}}. These measures (which are carried by $\{f>0\}$) have, in addition to {\rm{(a)}}, {\rm{(b)}}, {\rm{(c)}} in Theorem {\rm\ref{thm2.1}}, the following properties:
\begin{itemize}
\item[\rm(d)]
  ${\rm{M}}(f)\subset\Lambda(f)$, and $\Lambda(f)$ is an equivalence class in $\cal E^+$. In particular, if $G$ is strictly positive definite then ${\rm{M}}(f)$ and $\Lambda(f)$ reduce to the same single capacitary measure.
\end{itemize}
\end{theorem}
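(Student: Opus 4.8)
The plan is to combine Theorem~\ref{thm2.1}, which already supplies the nonvoid, vaguely compact class $\mathrm{M}(f)$ together with properties (a)--(c), with the positive definiteness of $G$ (used through the Cauchy--Schwarz inequality of Definition~\ref{def3.4}(i)) and with the identification $\gamma^*(f)=\gamma(f)=c(f)$, valid here for $f\in\mathcal H_0^*$. The latter holds because $\gamma(f)\le c(f)$ by Lemma~\ref{lemma3.2} while $c(f)\le\gamma(f)$ by Lemma~\ref{lemma3.5}(i) under positive definiteness; equivalently $c^*=\gamma^*$ by Corollary~\ref{cor3.13} together with the $c$-capacitability and finiteness of $c^*(f)=c(f)$ for $f\in\mathcal H_0^*$. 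The stated equivalence of ``$f\in\mathcal H_0^*$'' with ``$f$ quasi u.s.c.\ and $c^*(f)<+\infty$'' is then immediate from Lemma~\ref{lemma3.8}(a) and \cite[Theorem 2.5 (b)]{Fu4}. For the convexity of $\mathrm{M}(f)$ I would observe that, $G$ being positive definite, the quadratic form $\nu\mapsto\|\nu\|^2$ is convex on $\mathcal E^+$, so the Gauss integral $\nu\mapsto 2\nu(f)-\|\nu\|^2$ is concave and its set of maximizers $\mathrm{M}(f)$ is convex.

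The main step is to prove that (a) and (b) together already force $\mu\in\mathrm{M}(f)$, as anticipated in Remark~\ref{key}. So let $\mu\in\mathcal E^+$ satisfy $G\mu\ge f$ q.e.\ and $G\mu=f$ $\mu$-a.e., and fix a capacitary measure $\mu_0\in\mathrm{M}(f)$, which exists by Theorem~\ref{thm2.1} and satisfies (a)--(c); in particular $\|\mu_0\|=c(f)$ and $G\mu_0\ge f$ q.e. Since $\mu$ carries no set of zero outer capacity, integrating $G\mu=f$ $\mu$-a.e.\ gives $\|\mu\|^2=\int f\,d\mu$, while $G\mu_0\ge f$ q.e.\ yields $\int f\,d\mu\le\int G\mu_0\,d\mu=\int G\mu\,d\mu_0$. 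Applying Cauchy--Schwarz (Definition~\ref{def3.4}(i)) to the right-hand side gives
\[
\|\mu\|^2\le\int G\mu\,d\mu_0\le\|\mu\|\,\|\mu_0\|=\|\mu\|\,c(f),
\]
whence $\|\mu\|\le c(f)$ (the case $\mu=0$, forcing $f=0$ q.e.\ and $c(f)=0$, being trivial). On the other hand $G\mu\ge f$ q.e.\ means $\mu\in\Gamma^*(f)$, so $\|\mu\|\ge\gamma^*(f)=c(f)$. Thus $\|\mu\|=c(f)$, i.e.\ $\mu$ has property (c), and Remark~\ref{key} then gives $\mu\in\mathrm{M}(f)$. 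Together with the forward implication from Theorem~\ref{thm2.1}, this shows that $\mathrm{M}(f)$ consists exactly of the measures of $\mathcal E^+$ satisfying (a) and (b).

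For property (d) I would note that every $\mu\in\mathrm{M}(f)$ lies in $\Gamma^*(f)$ by (a) and has $\|\mu\|=c(f)=\gamma^*(f)$ by (c); hence $\mu\in\Lambda(f)=\Lambda^*(f)$ in the notation of Remark~\ref{remark3.9}, giving $\mathrm{M}(f)\subset\Lambda(f)$. That $\Lambda(f)$ is a single equivalence class in $\mathcal E^+$ is exactly the content of Remark~\ref{remark3.9} (via \cite[Lemma 4.1.1]{Fu1}): $\Gamma^*(f)$ is a strongly closed convex subset of the strongly complete cone $\mathcal E^+$, so its measures of minimal energy exist and any two of them differ by a measure of zero energy, i.e.\ have the same potential q.e. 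Finally, if $G$ is strictly positive definite the seminorm $\|\cdot\|$ is a norm, so a zero-energy difference forces actual equality; the equivalence class $\Lambda(f)$ therefore reduces to a single measure, and since $\varnothing\ne\mathrm{M}(f)\subset\Lambda(f)$, both $\mathrm{M}(f)$ and $\Lambda(f)$ reduce to that one capacitary measure.

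I expect the only delicate point to be the converse characterization in the second paragraph: it is here that positive definiteness is essential, both for the availability of Cauchy--Schwarz and (separately) for the concavity of the Gauss integral. The device that does the work is the comparison with a pre-existing $\mu_0\in\mathrm{M}(f)$, which converts the two pointwise conditions (a) and (b) into the single energy identity (c). Everything else is bookkeeping with the capacity identities established in Section~\ref{sec3} and with Remark~\ref{remark3.9}.
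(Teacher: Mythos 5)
Your proof is correct, but your key step takes a recognizably different route from the paper's. For the converse implication (a)${}+{}$(b)${}\Rightarrow\mu\in\mathrm M(f)$, the paper argues in one line via Definition~\ref{def3.4}(iii): from (a), $c(f)^2\le c(G\mu)^2=\|\mu\|^2=\int f\,d\mu$ (the last equality by (b)), while $\int f\,d\mu\le c(f)^2$ because (b) makes $\mu$ a competitor in (\ref{2.5}); so $\mu$ is maximizing in (\ref{2.5}) and hence lies in $\mathrm M(f)$ by Theorem~\ref{thm2.1}. You instead pin down $\|\mu\|=c(f)$ by squeezing: the upper bound via a comparison measure $\mu_0\in\mathrm M(f)$ and the Cauchy--Schwarz form (i) of positive definiteness, the lower bound via $\mu\in\Gamma^*(f)$ and the identity $\gamma^*(f)=c(f)$, and then you invoke the already-established part of Remark~\ref{key} that property (c) alone characterizes $\mathrm M(f)$. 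Both arguments lean on positive definiteness (forms (i) and (iii) of Definition~\ref{def3.4} are equivalent), but the paper's is more economical and does not need the dual-capacity identity for this step, whereas yours makes the role of $\gamma^*$ explicit and reuses Remark~\ref{key} cleanly. Your convexity argument also differs: the paper reduces convexity of $\mathrm M(f)$ to that of $\mathrm M_1(f)$ and hence of $\cal E^+_1$, while you argue directly that positive definiteness makes $\nu\mapsto\|\nu\|^2$ convex, so the Gauss integral is concave and its maximizer set convex --- equally valid and arguably more transparent. One small inaccuracy worth flagging: in justifying that $\Lambda(f)$ is an equivalence class you appeal to strong completeness of $\cal E^+$, which is neither needed nor available here ($G$ is not assumed strictly positive definite, and completeness is tied to consistency only in the strict case); existence of energy minimizers is Theorem~\ref{thm3.7} (a vague-compactness argument), and the equivalence-class property is the parallelogram-law argument of \cite[Lemma 4.1.1]{Fu1} applied to the convex set $\Gamma^*(f)$, exactly as in Remark~\ref{remark3.9}. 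Since you do cite that remark and lemma, this is a blemish in the explanation rather than a gap in the proof.
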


\begin{proof} For any measure $\mu\in\cal E^+$ satisfying (a) and (b) we have
$$
c(f)^2\le c(G\mu)^2=\int G\mu\,d\mu=\int f\,d\mu,
$$
the inequality by (a), the former equality by Definition \ref{def3.4} (iii), and the latter equality by (b). On the other hand, $c(f)^2\ge\int f\,d\mu$ by (\ref{2.5}) because $\mu\in\cal E^+$ and $G\mu\le f$ $\mu$-a.e., by (b). Thus $\mu$ is maximizing in (\ref{2.5}), and hence indeed $\mu\in{\rm{M}}(f)$ according to Theorem \ref{thm2.1}.

Concerning (d), for the equality $c(f)=\gamma(f)$ see Lemma \ref{lemma3.5} (i). It follows from (a) that $\mathrm M(f)\subset\Gamma^*(f)$. Since, by (c),
$\|\mu\|=c(f)=\gamma(f)$ for any $\mu\in\mathrm M(f)$ we thus have
$\mathrm M(f)\subset\Lambda(f)$,
cf.\ Remark \ref{remark3.9}, where it is noted that $\Lambda(f)$ is an equivalence class in $\cal E^+$. Convexity of ${\rm{M}}(f)$ amounts to that of ${\rm{M}}_1(f)$, cf.\ Theorem \ref{thm2.1}, which reduces to that of $\cal E^+_1$ (a consequence of positive definiteness of $G$). This establishes (d).
\end{proof}

\subsection{Capacitability of Suslin functions.}
\begin{theorem}\label{thm3.14} Suppose that $G$ is consistent and positive definite. Then the following two assertions hold.
\begin{itemize}
\item[\rm(a)] For any decreasing net of u.s.c.\ functions $h_\alpha\in\cal F^+$ with $c^*(h_\alpha)<+\infty$ we have
$$
c^*(\inf_\alpha\,h_\alpha)=\inf_\alpha\,c^*(h_\alpha).
$$
\item[\rm(b)] For any decreasing sequence of quasi u.s.c.\ functions $h_n\in\cal F^+$ with $c^*(h_n)<+\infty$ we have
$$
c^*(\inf_n\,h_n)=\inf_n\,c^*(h_n).
$$
\end{itemize}
\end{theorem}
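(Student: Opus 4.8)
The plan is to prove each part by establishing only the nontrivial inequality, the reverse being immediate: since $h:=\inf_\alpha h_\alpha\le h_\alpha$ and $c^*$ is increasing, $c^*(h)\le\inf_\alpha c^*(h_\alpha)$ at once, and likewise in (b). So in (a) I must show $\inf_\alpha c^*(h_\alpha)\le c^*(h)$. It is worth noting why the energy-space machinery cannot supply this: a vague cluster point $\lambda$ of minimizers $\lambda_\alpha\in\Lambda^*(h_\alpha)$, fed through the convergence theorem, only yields $G\lambda\ge\liminf_\alpha h_\alpha=h$, i.e.\ it reproduces the \emph{trivial} inequality $c^*(h)\le\inf_\alpha c^*(h_\alpha)$ (because $\Gamma^*$ is antitone in its argument). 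The reverse inequality is a genuine ``continuity from above'' statement and must be obtained by a topological (Dini-type) compactness argument.

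For (a) I may assume $c^*(h)<\infty$. Fix $\eta>0$; the goal is a single index $\alpha_1$ with $c^*(h_{\alpha_1})<c^*(h)+O(\eta)$. By (\ref{2.3}) choose $g\in\cal G$ with $g\ge h$ everywhere and $c(g)<c^*(h)+\eta$. The core is a Dini argument: the sets $U_\alpha=\{h_\alpha<g+\eps'\}$ are open, since $h_\alpha<g+\eps'$ can be written $\bigcup_{q\in\QQ}(\{h_\alpha<q\}\cap\{g>q-\eps'\})$, using that $h_\alpha$ is u.s.c.\ and $g$ is l.s.c.; they increase with $\alpha$; and for every compact $K$ they cover $K\setminus\{h=\infty\}$, because at each $x$ with $h(x)<\infty$ one has $h_\alpha(x)\downarrow h(x)\le g(x)<g(x)+\eps'$ (the case $g(x)=\infty$ being handled by the finiteness of some $h_\alpha(x)$). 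Since $c^*(h)<\infty$ forces $c^*(\{h=\infty\})=0$, I can enclose $\{h=\infty\}$ in an open $W$ of arbitrarily small capacity; then $K\subset W\cup\bigcup_\alpha U_\alpha$, and compactness together with the directedness of $(U_\alpha)$ yields one $\alpha_1$ with $K\subset W\cup U_{\alpha_1}$.

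The main obstacle is to convert this local domination into a \emph{global} capacity bound: $X$ is only locally compact and the $h_\alpha$ may take the value $+\infty$, so neither the choice of $K$ nor the control of $h_{\alpha_1}$ on $W$ is automatic. Both are supplied by the finiteness of $c^*(h_{\alpha_0})$ for a fixed $\alpha_0$: by Lemma~\ref{lemma3.8}(a) there is $u\in\cal H_0^*$ with $u\ge h_{\alpha_0}\ge h_\alpha$ for $\alpha\succeq\alpha_0$, and since $u\in\cal H_0^*$ I may take $\phi\in\cal H_0$ with $\phi\le u$ and $c^*(u-\phi)<\eta$. With $K=\supp\phi$ and $M=\max\phi<\infty$, the pointwise inequalities $u\,1_{\complement K}\le u-\phi$ and $u\,1_W\le(u-\phi)+M\,1_W$ give $c^*(h_\alpha 1_{\complement K})<\eta$ and $c^*(h_\alpha 1_W)<\eta+M\,c^*(W)$ for all $\alpha\succeq\alpha_0$; choosing $W$ \emph{after} $M$ so that $M\,c^*(W)<\eta$, and $\eps'$ so that $\eps' c(K)<\eta$, the splitting $h_{\alpha_1}\le h_{\alpha_1}1_{\complement K}+h_{\alpha_1}1_{K\cap U_{\alpha_1}}+h_{\alpha_1}1_{K\cap W}$, the bound $h_{\alpha_1}1_{K\cap U_{\alpha_1}}\le g+\eps'1_K$, and subadditivity of $c^*$ bound the three contributions by $\eta$, by $c(g)+\eps' c(K)<c^*(h)+2\eta$, and by $\eta+M\,c^*(W)<2\eta$ respectively, so that $c^*(h_{\alpha_1})<c^*(h)+O(\eta)$. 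Letting $\eta\to0$ proves (a).

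For (b) I would reduce to (a). Each $h_n$ is quasi u.s.c.\ with $c^*(h_n)<\infty$, hence $h_n\in\cal H_0^*$ by Theorem~\ref{thm3.9}; so for $\eps>0$ I pick $\phi_n\in\cal H_0$ with $\phi_n\le h_n$ and $c^*(h_n-\phi_n)<\eps\,2^{-n}$. The functions $\psi_n:=\min(\phi_1,\dots,\phi_n)\in\cal H_0$ then decrease, satisfy $\psi_n\le h_n$, and obey $h_n-\psi_n=\max_{k\le n}(h_n-\phi_k)\le\sum_{k\le n}(h_k-\phi_k)$ (using $h_n\le h_k$ for $k\le n$), whence $c^*(h_n-\psi_n)<\eps$ by countable subadditivity. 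Applying (a) to the decreasing sequence of u.s.c.\ functions $(\psi_n)$ gives $c^*(\inf_n\psi_n)=\inf_n c^*(\psi_n)$; since $\inf_n\psi_n\le\inf_n h_n=:h$ while $c^*(h_n)\le c^*(\psi_n)+c^*(h_n-\psi_n)<c^*(\psi_n)+\eps$, I conclude $\inf_n c^*(h_n)\le\inf_n c^*(\psi_n)+\eps=c^*(\inf_n\psi_n)+\eps\le c^*(h)+\eps$. Letting $\eps\to0$ gives the nontrivial inequality in (b).
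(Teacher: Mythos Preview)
Your proof is correct, but it takes a markedly different route from the paper's.

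The paper's argument is a two-line reduction: since $c^*(h_\alpha)<+\infty$, Lemma~\ref{lemma3.8}(a) provides each $h_\alpha$ with a majorant in $\cal H_0^*$; because $h_\alpha$ is (quasi) u.s.c., \cite[Theorem~2.5(b)]{Fu4} then places $h_\alpha$ itself in $\cal H_0^*$, and the continuity-from-above statement \cite[Theorem~3.6]{Fu4} (part (a) for nets of u.s.c.\ functions in $\cal H_0^*$, part (c) for sequences) finishes both cases immediately. Thus the paper outsources the real work to the companion paper \cite{Fu4}.

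You instead give a direct, self-contained Dini-type argument. For (a) you approximate $h$ from above by $g\in\cal G$, use the open cover $U_\alpha=\{h_\alpha<g+\eps'\}$ on a well-chosen compact set $K=\supp\phi$ (where $\phi\in\cal H_0$ approximates the $\cal H_0^*$-majorant $u$ of $h_{\alpha_0}$ supplied by Lemma~\ref{lemma3.8}(a)), and control the off-$K$ and near-infinity pieces through the bounds $u\,1_{\complement K}\le u-\phi$ and $u\,1_W\le(u-\phi)+M\,1_W$. For (b) you reduce to (a) by approximating each $h_n$ from below by $\phi_n\in\cal H_0$, taking running minima $\psi_n$, and using countable subadditivity to control $h_n-\psi_n$. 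Both reductions are sound; the only expository wrinkle is that the parameters $\eps'$, $K$, $W$ are introduced in the first paragraph before their interdependencies are fixed in the second, but the eventual order of choice ($\eta$; then $\alpha_0,u,\phi,K,M$; then $W$; then $\eps'$; then $g$; then $\alpha_1\succeq\alpha_0$ via compactness) is consistent. One small point worth making explicit: after the finite subcover you should take $\alpha_1$ above both $\alpha_0$ and the finitely many covering indices, which directedness permits.

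What each approach buys: the paper's proof is economical but opaque unless one has \cite{Fu4} at hand; yours unwinds exactly the compactness mechanism that \cite[Theorem~3.6]{Fu4} presumably encapsulates, making the result self-contained within the present paper's framework and clarifying where consistency and positive definiteness actually enter (only through Lemma~\ref{lemma3.8}(a), to produce the $\cal H_0^*$-majorant $u$).
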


\begin{proof} Assertion (a) follows from \cite[Theorem 3.6]{Fu4} (a) because $c^*(h_\alpha)<+\infty$ and hence each $h_\alpha$ has a majorant of class $\cal H^*_0$ according to the former assertion in Lemma \ref{lemma3.8} (a), so that \cite[Theorem 2.5 (b)]{Fu4} applies.
For (b) of the present theorem, the latter assertion of Lemma \ref{lemma3.8} (a) shows that $h_n\in\cal H^*_0$ and hence \cite[Theorem 3.6]{Fu4} (c) applies.
\end{proof}

\begin{theorem}\label{thm3.15} Suppose that $G$ is consistent and positive definite. Every $\cal H_0$-Suslin function $f\in\cal F^+$ is $c$-capacitable.
\end{theorem}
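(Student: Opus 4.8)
The plan is to realize this as an instance of Choquet's capacitability theorem \cite{Ch5}, with $c^*$ in the role of the outer capacity and the cone $\cal H_0$ (whose $c^*$-closure $\cal H_0^*$ consists of $c$-capacitable functions) playing the role of the compacta. The two structural properties required of a Choquet capacity are already available: $c^*$ is increasing and \emph{continuous from below} along increasing sequences in $\cal F^+$ by Corollary \ref{cor3.13}, and it is \emph{continuous from above} along decreasing sequences of u.s.c.\ functions of finite capacity by Theorem \ref{thm3.14}(a). Since $c_*\le c^*$ always, it suffices to prove $c^*(f)\le c_*(f)$, and by the definition (\ref{2.2}) of $c_*$ this follows once we produce, for each $\eps>0$, a function $g\in\cal H_0^*$ with $g\le f$ and $c^*(g)\ge c^*(f)-\eps$: such a $g$ is $c$-capacitable, so $c_*(f)\ge c_*(g)=c(g)=c^*(g)\ge c^*(f)-\eps$, and letting $\eps\downarrow0$ finishes the argument.

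First I would put the $\cal H_0$-Suslin representation of $f$ into regular form, writing $f=\sup_{\sigma\in\NN^\NN}\inf_n h_{\sigma|n}$ with $h_s\in\cal H_0$ and $h_{s'}\le h_s$ whenever $s\subset s'$ (replace each $h_s$ by the infimum of its finitely many ancestors, which stays in $\cal H_0$). The core is then the standard Choquet scheme. For finite truncations I set $g_p=\max\{\inf_{n\le p}h_{\sigma|n}:\ \sigma_i\le N_i\ (i\le p)\}$, a \emph{finite} supremum of finite infima of $\cal H_0$-functions, hence again of class $\cal H_0$; with the $N_i$ chosen successively large and the scheme regular, $(g_p)$ is decreasing. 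The bounds $N_i$ are fixed inductively using continuity from below (Corollary \ref{cor3.13}): at level $p$ one enlarges $N_p$ so that the coordinate-bounded truncation loses at most $\eps 2^{-p}$ of $c^*(f)$, which forces $c^*(g_p)\ge c^*(f)-\eps$ for every $p$.

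Setting $g=\inf_p g_p$, Theorem \ref{thm3.14}(a) (applied to the decreasing sequence of $\cal H_0$-functions $g_p$, all of finite capacity) yields $c^*(g)=\lim_p c^*(g_p)\ge c^*(f)-\eps$; moreover $g$ is quasi u.s.c.\ of finite capacity, hence $g\in\cal H_0^*$ by Lemma \ref{lemma3.8}(a). This delivers the function required in the first paragraph.

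I expect the only genuinely nontrivial point to be the inequality $g\le f$, which is exactly where the compactness built into the Suslin operation enters. Fixing $x$ with $g(x)>0$, the set of finite sequences $s$ with $s_i\le N_i$ and $h_{s|n}(x)\ge g(x)$ for all $n\le|s|$ forms a tree that is infinite (since $g_p(x)\ge g(x)$ furnishes a node at each level $p$) and finitely branching (the coordinates are bounded by the $N_i$); by K\"onig's lemma it has an infinite branch $\sigma^*$, along which $\inf_n h_{\sigma^*|n}(x)\ge g(x)$, whence $f(x)\ge g(x)$. The remaining work is bookkeeping: checking that finite suprema and finite infima keep one inside $\cal H_0$, that regularity makes $(g_p)$ decreasing, and that the quasi upper semicontinuity and finite capacity of the limit are in place so that continuity from above (Theorem \ref{thm3.14}) is legitimately invoked.
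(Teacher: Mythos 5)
Your proposal is correct in substance and rests on exactly the same two pillars as the paper's proof, but it takes a more self-contained route: the paper's proof is two lines long, citing Choquet's capacitability theorem in its abstract form \cite{Ch4} and feeding it the sequential order continuity of $c^*$ from below on $\cal F^+$ (Corollary \ref{cor3.13}) together with continuity from above along decreasing sequences (the paper invokes Theorem \ref{thm3.14} (b); your use of part (a) is equally legitimate, since $\cal H_0$-functions are u.s.c.\ and of finite capacity). What you have done, in effect, is reprove the abstract capacitability theorem by hand --- regularized scheme, coordinate-bounded truncations $g_p\in\cal H_0$ chosen via continuity from below with error $\eps 2^{-p}$, passage to $g=\inf_p g_p$ via continuity from above, and K\"onig's lemma for $g\le f$ --- and all of these steps check out. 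Two small remarks. First, your detour through quasi upper semicontinuity and Lemma \ref{lemma3.8} (a) is unnecessary: $g=\inf_p g_p$ is itself u.s.c., finite, and supported in the compact set $\supp g_1$, so $g\in\cal H_0$ outright and is $c$-capacitable for free (this also removes any apparent extra reliance on positive definiteness at that step). Second, your bookkeeping tacitly assumes $c^*(f)<+\infty$: if $c^*(f)=+\infty$ the target $c^*(f)-\eps$ is meaningless, and you should instead fix an arbitrary $M<+\infty$, choose $N_1$ so that the first truncation has upper capacity at least $M$ (possible by continuity from below), and run the identical scheme to get $c_*(f)\ge M-\eps$, whence $c_*(f)=+\infty$. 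With that one-line patch the argument is complete; its virtue over the paper's version is that it makes visible exactly where the hypotheses enter (consistency and positive definiteness only through Corollary \ref{cor3.13} and Theorem \ref{thm3.14}) and where the compactness built into the Suslin operation is used (K\"onig's lemma), at the cost of reproducing an argument the paper deliberately outsources to \cite{Ch4}.
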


\begin{proof} This follows from the sequential order continuity of $c^*$ from below on functions of class $\cal F^+$ (Corollary \ref{cor3.13}) together with the sequential order continuity of $c$ from above on functions of class $\cal H_0$
(Theorem \ref{thm3.14} (b)). We merely have to apply Choquet's capacitability theorem in its abstract form \cite{Ch4}, cf.\ \cite[Section 1.6 and Remark 2 in Section 4.5]{Fu4}.
\end{proof}

\subsection{Upper capacitary measures.} Throughout this subsection it is assumed that $G$ is consistent and positive definite, and also that $X$ {\textit{has a countable base of open sets}} (the second axiom of countability). Passing from the given function $f\in\cal H^*_0$ in Theorems \ref{thm2.1} and \ref{thm3.9} to an arbitrary function $f\in\cal F^+$ of finite upper capacity $c^*(f)$, we introduce the {\textit{upper capacitary measures}} for $f$.

For any $f\in\cal F^+$ define
$$
\Phi^*(f)=\bigl\{\phi\in\cal F^+:\ \phi\text{\ is quasi u.s.c.,\ }\phi\ge f\text{\ q.e.}\bigr\}.
$$
Clearly, $\Phi^*(f)$ is convex and stable under countable infimum. Furthermore, $\Phi^*(f)$ is a union of $c^*$-equivalence classes in $\cal F^+$, and only depends on the $c^*$-equivalence class  of $f$ in $\cal F^+$. By Corollary \ref{cor3.13}, $c^*$ is sequentially order continuous from below. Since $X$ has a countable base of open sets it therefore follows from \cite[Theorem 3.4]{Fu3} that $f$ has a {\textit{quasi u.s.c.}\ (upper) {\textit{envelope} $f^*$, that is,
a quasiminimal element of $\Phi^*(f)$. Explicitly, $f^*$ is an element of $\Phi^*(f)$ which is majorized q.e.\ by any element of $\Phi^*(f)$. If $c^*(f)<+\infty$, that is $\gamma^*(f)<+\infty$, then every quasi u.s.c.\ envelope $f^*$ of $f$ is of class $\cal H^*_0$, being majorized q.e.\ by $G\lambda\in\cal H^*_0$ for any $\lambda\in\Lambda^*(f)\ne\varnothing$
(cf.\ Lemma \ref{lemma3.8} (a))}. Any two quasi u.s.c.\ envelopes of $f$ are $c^*$-equivalent.

\begin{definition}\label{def3.16} Let $X$ have a countable base of open sets and let $G$ be consistent and positive definite. Let $f\in\cal F^+$ and suppose that $c^*(f)<+\infty$.  By an {\textit{upper capacitary measure}} for $f$ we understand a measure $\mu\in\cal E^+$ whose potential has the following two properties, $f^*$ denoting a chosen upper envelope of $f$:
\begin{itemize}
\item[\rm(a)] $G\mu\ge f$ q.e., that is, $G\mu\in\Phi^*(f)$,
\item[\rm(b)] $G\mu\le f^*$ $\mu$-a.e.
\end{itemize}
We denote by ${\rm{M}}^*(f)$ the set of all upper capacitary measures for $f$.
\end{definition}

\begin{theorem}\label{thm3.17} Suppose that $X$ has a countable base and that $G$ is consistent and positive definite. Let $f\in\cal F^+$, and suppose that $c^*(f)<+\infty$. For any quasi u.s.c.\ envelope $f^*$ of $f$ we have
\begin{eqnarray*}
\quad c^*(f)=c(f^*),\quad\Lambda^*(f)=\Lambda(f^*),\quad{\rm{M}}^*(f)={\rm{M}}(f^*),\label{3.2}
\end{eqnarray*} where ${\rm{M}}(f^*)$ denotes the nonvoid vaguely compact convex set of all capacitary measures for $f^*\in\cal H^*_0$, cf.\ Theorems {\rm\ref{thm2.1}} and {\rm\ref{thm3.9}} (now with $f$ replaced by $f^*$).
\end{theorem}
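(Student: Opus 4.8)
The plan is to reduce every assertion to the theory already developed for functions of class $\cal H^*_0$ (Theorems \ref{thm2.1} and \ref{thm3.9}), using as the single essential tool the defining minimality of the quasi u.s.c.\ envelope $f^*$. The whole argument rests on one observation: for any $\lambda\in\cal E^+$ the potential $G\lambda$ belongs to $\cal H^*_0$ by consistency (Definition \ref{def3.5}) and is therefore quasi u.s.c.\ by \cite[Theorem 2.5(b)]{Fu4}; hence whenever $G\lambda\ge f$ q.e.\ we have $G\lambda\in\Phi^*(f)$, and the envelope property forces $f^*\le G\lambda$ q.e. This is precisely the step that upgrades a constraint phrased in terms of $f$ to the same constraint phrased in terms of $f^*$.

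First I would establish $\Gamma^*(f)=\Gamma^*(f^*)$. The inclusion $\Gamma^*(f^*)\subset\Gamma^*(f)$ is immediate from $f^*\ge f$ q.e. Conversely, if $\lambda\in\Gamma^*(f)$, the observation above yields $f^*\le G\lambda$ q.e., so $\lambda\in\Gamma^*(f^*)$. Consequently $\gamma^*(f)=\gamma^*(f^*)$ and $\Lambda^*(f)=\Lambda^*(f^*)$. Since $f^*\in\cal H^*_0$, Lemma \ref{lemma3.2} gives $\Gamma^*(f^*)=\Gamma_*(f^*)$ and $\gamma^*(f^*)=\gamma(f^*)$, whence $\Lambda^*(f^*)=\Lambda(f^*)$; this is the middle equality $\Lambda^*(f)=\Lambda(f^*)$. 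For the first equality, Corollary \ref{cor3.13} gives $c^*=\gamma^*$ on all of $\cal F^+$, while $f^*\in\cal H^*_0$ is $c$-capacitable, so $c^*(f)=\gamma^*(f)=\gamma^*(f^*)=c^*(f^*)=c(f^*)$.

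For the third equality I would use the characterization in Theorem \ref{thm3.9}: ${\rm M}(f^*)$ consists exactly of those $\mu\in\cal E^+$ with $G\mu\ge f^*$ q.e.\ and $G\mu=f^*$ $\mu$-a.e. The inclusion ${\rm M}(f^*)\subset{\rm M}^*(f)$ is routine, since such a $\mu$ satisfies $G\mu\ge f^*\ge f$ q.e.\ (giving (a) of Definition \ref{def3.16}) and $G\mu=f^*$, in particular $G\mu\le f^*$, $\mu$-a.e.\ (giving (b)). For the reverse inclusion, let $\mu\in{\rm M}^*(f)$, so $G\mu\ge f$ q.e.\ and $G\mu\le f^*$ $\mu$-a.e. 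Applying the central observation to $\mu$ upgrades $G\mu\ge f$ q.e.\ to $G\mu\ge f^*$ q.e., which is property (a) for ${\rm M}(f^*)$. Because $\mu\in\cal E^+$ does not charge sets of zero outer capacity (cf.\ the latter inequality in (\ref{2.3})), this q.e.\ inequality holds $\mu$-a.e.; combined with the hypothesis $G\mu\le f^*$ $\mu$-a.e.\ it gives $G\mu=f^*$ $\mu$-a.e., which is property (b). Hence $\mu\in{\rm M}(f^*)$.

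The one conceptual point driving the proof is the envelope-minimality step, which promotes data attached to $f$ (inequalities valid only q.e.\ against $f$) to the same data attached to $f^*$; this is where the hypotheses $c^*(f)<+\infty$ (guaranteeing $\Lambda^*(f)\ne\varnothing$ and $f^*\in\cal H^*_0$) and the countable base (guaranteeing existence of $f^*$) are really used. Everything else is bookkeeping: the passage from q.e.\ to $\mu$-a.e.\ is the standard fact that measures of finite energy ignore sets of zero outer capacity, and the identifications of $\gamma^*$, $\Lambda$, and ${\rm M}$ across $f$ and $f^*$ then follow mechanically from Sections \ref{sec2} and \ref{sec3}.
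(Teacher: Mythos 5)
Your proposal is correct, and its engine is the same as the paper's: the minimality of $f^*$ in $\Phi^*(f)$, applied to quasi u.s.c.\ potentials $G\lambda\in\cal H^*_0$, which upgrades $G\lambda\ge f$ q.e.\ to $G\lambda\ge f^*$ q.e. Where you genuinely diverge is in the middle equality. The paper only records the inclusion $\Lambda^*(f)\subset\Gamma^*(f)\subset\Gamma(f^*)$ and then compares a minimizer $\lambda\in\Lambda(f^*)$ with $\lambda^*\in\Lambda^*(f)$ via the Hilbertian estimate of \cite[Lemma 4.1.1]{Fu1}, namely $\|\lambda^*-\lambda\|^2\le\|\lambda^*\|^2-\|\lambda\|^2=c^*(f)^2-c(f^*)^2=0$, together with the fact that $\Lambda^*(f)$ and $\Lambda(f^*)$ are equivalence classes. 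You instead prove the full set identity $\Gamma^*(f)=\Gamma^*(f^*)$ (the paper uses only one inclusion), after which $\Lambda^*(f)=\Lambda^*(f^*)$ is tautological --- same constraint set, same infimum --- and $\Lambda^*(f^*)=\Lambda(f^*)$ follows from Lemma \ref{lemma3.2} since $f^*\in\cal H^*_0$. Your route is more elementary (no appeal to the parallelogram-type lemma, hence no explicit use of the equivalence-class structure) and yields slightly more, namely the identity $\Gamma^*(f)=\Gamma(f^*)$ itself; the paper's energy-comparison argument is the one that generalizes when one has only an inclusion of constraint sets rather than equality, but here equality is available, so nothing is lost. Your derivation of $c^*(f)=c(f^*)$ also differs cosmetically: you apply Corollary \ref{cor3.13} ($c^*=\gamma^*$) on both sides of $\gamma^*(f)=\gamma^*(f^*)$ plus capacitability of $f^*$, where the paper runs the chain $c^*(f)=\|\lambda\|=c(G\lambda)\ge c(f^*)\ge c^*(f)$ through Definition \ref{def3.4}\,(iii); both rest on positive definiteness. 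Your treatment of ${\rm M}^*(f)={\rm M}(f^*)$, including the passage from the q.e.\ inequality $G\mu\ge f^*$ to the $\mu$-a.e.\ one via (\ref{2.3}), matches the paper's argument step for step.
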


\begin{proof} With $\lambda\in\Lambda^*(f)$ we have $G\lambda\in\Phi^*(f)$ and $\|\lambda\|=\gamma^*(f)=c^*(f)$ (Remark \ref{remark3.9} and Corollary \ref{cor3.13}).
Since $f^*$ is a quasiminimal function in $\Phi^*(f)$ and $G\lambda\in\Phi^*(f)$ it follows that $G\lambda\ge f^*$ q.e.
Hence $c^*(f)=\|\lambda\|=c(G\lambda)\ge c(f^*)\ge c^*(f)$, and so indeed $c^*(f)=c(f^*)$.

Next, let $\lambda\in\Lambda(f^*)$ and $\lambda^*\in\Lambda^*(f)\subset\Gamma^*(f)\subset\Gamma(f^*)$. Here, $\Gamma(f^*):=\Gamma^*(f^*)=\Gamma_*(f^*)$, the latter equality by Lemma \ref{lemma3.2}.
Thus $\lambda^*\in\Gamma(f^*)$, and $\lambda$ is even energy minimizing in $\Gamma(f^*)$. It therefore follows by \cite[Lemma 4.1.1]{Fu1} applied to $\Gamma=\Gamma(f^*)$ that
$$
\|\lambda^*-\lambda\|^2\le\|\lambda^*\|^2-\|\lambda\|^2=c^*(f)^2-c(f^*)^2=0,
$$
and hence any $\lambda^*$ in $\Lambda^*(f)$ is equivalent to some $\lambda$ in the equivalence class $\Lambda(f^*)$, cf.\ Theorem \ref{thm3.9} (d) (with $f$ replaced by $f^*$).
As $\Lambda^*(f)$ is an equivalence class as well (Remark \ref{remark3.9}), this yields $\Lambda^*(f)=\Lambda(f^*)$.

Finally, let $\mu\in{\rm{M}}^*(f)$. Then
 (a) in Definition \ref{def3.16} implies the stronger inequality $G\mu\ge f^*$ q.e.\ (as in the beginning of the proof),  in view of (\ref{2.3}). And this together with (b) in the same definition implies
 $\mu\in{\rm{M}}(f^*)$, so that ${\rm{M}}^*(f)\subset{\rm{M}}(f^*)$
 in view of the characterization of ${\rm{M}}(f^*)$ in Theorem \ref{thm3.9} (with $f$ replaced by $f^*$).
The converse inclusion is obvious, and so indeed ${\rm{M}}^*(f)={\rm{M}}(f^*)$.
\end{proof}

We omit the analogous consideration of the {\textit{lower}} capacitary measures for $f$.

\section{Balayage (sweeping) on a quasiclosed set}\label{sec4}

For sweeping of a (positive) measure $\omega$ (not necessarily of finite energy) onto a suitable set $A\subset X$ we shall apply Theorem \ref{thm3.9} to the function $f:=1_AG\omega$. For that purpose we need that $f\in\cal H^*_0$, in particular that $c^*(1_AG\omega)<+\infty$.

\subsection{Quasicontinuous functions.}\label{quasi}
A map $f$ of a subset $U$ of $X$ into a topological space $T$ is said to be {\textit{quasicontinuous}} (with respect to outer energy capacity $c^*$) if there exists for any $\eps>0$ a set $V\subset U$ with $c^*(V)<\eps$ such that the restriction of $f$ to $U\setminus V$ is continuous (in the extended sense if e.g.\ $T=[0,+\infty]$). One may then clearly take $V$ open. {\textit{Quasi u.s.c}}.\ functions of class $\cal F^+$ are defined similarly, replacing `continuous' by `u.s.c.' (twice). As noted earlier, a set $A\subset X$ is quasiclosed if and only if $1_A\in\cal F^+$ is quasi u.s.c.

\begin{remark}\label{remark4.0} If $G$ is consistent and $\omega\in\cal E^+$ then $G\omega\in\cal H^*_0$ (Definition \ref{def3.5}), and in particular $G\omega$ is quasi u.s.c.\ by \cite[Theorem 2.5 (b)]{Fu4} (and even quasicontinuous), and hence $1_AG\omega$ is quasi u.s.c.\ for any quasiclosed set $A\subset X$.
\end{remark}

\begin{lemma}\label{lemma4.0} Suppose that $G$ is consistent, and positive definite.
Let $\omega$ be a measure on $X$ such that $G\omega$ is quasicontinuous, and let $A\subset X$ be quasiclosed. Write $f:=1_AG\omega$.
\begin{itemize}
\item[\rm(a)] If $c^*(f)<+\infty$ then $f\in\cal H_0^*$.
\item[\rm(b)] If $X$ has a countable base then $f$ is $c$-capacitable: $c^*(f)=c_*(f)$ $({}=c(f))$.
\end{itemize}
\end{lemma}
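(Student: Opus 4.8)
The plan is to reduce both parts to results already established, the only genuinely new ingredient being that the product $f=1_AG\omega$ is quasi u.s.c. For (a), granting this, Lemma~\ref{lemma3.8}(a) applies at once: $f$ is quasi u.s.c.\ with $c^*(f)<+\infty$ and $G$ is consistent and positive definite, whence $f\in\cal H_0^*$. So all of (a) reduces to the quasi upper semicontinuity of $1_AG\omega$.

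To prove that, fix $\eps>0$ and choose, by the hypotheses, open sets $V,W$ with $c^*(V)<\eps/2$ and $c^*(W)<\eps/2$ such that $G\omega|_{\complement V}$ is continuous (in the extended sense) and $1_A|_{\complement W}$ is u.s.c.\ (the definition of $A$ quasiclosed). On $E:=\complement(V\cup W)$ I would verify u.s.c.\ of $f|_E$ through its superlevel sets: for $t>0$,
$$
\{x\in E:\ f(x)\ge t\}=(A\cap E)\cap\{x\in E:\ G\omega(x)\ge t\},
$$
which is relatively closed in $E$ as the intersection of two relatively closed sets (the first since $1_A|_{\complement W}$ is u.s.c., the second since $G\omega|_{\complement V}$ is continuous), while for $t\le0$ the superlevel set is all of $E$. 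Hence $f|_E$ is u.s.c., and as $c^*(V\cup W)<\eps$ by subadditivity of $c^*$, $f$ is quasi u.s.c. No finiteness of $c^*(f)$ enters here, so $f$ is quasi u.s.c.\ unconditionally; this is what makes (b) work.

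For (b) I would not assume $c^*(f)<+\infty$. Since $X$ is locally compact Hausdorff with a countable base, it is $\sigma$-compact; fix an increasing exhaustion $K_n\uparrow X$ by compact sets and set $f_n:=1_{K_n}\min(f,n)$. Each $f_n$ is quasi u.s.c.\ by the product principle above (both factors are now bounded), and $f_n\le n\,1_{K_n}$ gives $c^*(f_n)\le n\,c(K_n)<+\infty$ since $1_{K_n}\in\cal H_0$. Thus Lemma~\ref{lemma3.8}(a) gives $f_n\in\cal H_0^*$, so each $f_n$ is $c$-capacitable, $c^*(f_n)=c_*(f_n)$, and $\cal E^+$-measurable (functions of class $\cal H_0^*$ are integrable against every $\mu\in\cal E^+$). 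The $f_n$ increase to $f$, so by the sequential order continuity from below of $c^*$ and of $c_*$ (the latter on $\cal E^+$-measurable functions), both furnished by Corollary~\ref{cor3.13},
$$
c^*(f)=\sup_n c^*(f_n)=\sup_n c_*(f_n)=c_*(f),
$$
and $f$ is $c$-capacitable. The argument is uniform in that it also covers the case where the common value is $+\infty$.

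The step I expect to be the main obstacle is the quasi upper semicontinuity of $1_AG\omega$ when $G\omega$ takes infinite values: the superlevel-set formulation above is designed precisely to handle the extended-real-valued case cleanly and to sidestep any $0\cdot\infty$ ambiguity, since off $A$ the function is $0$ by definition. Once this is in hand, part (a) is immediate from Lemma~\ref{lemma3.8}(a), and part (b) is a routine truncation-and-continuity argument whose only essential use of the countable base is the $\sigma$-compactness needed to keep each $c^*(f_n)$ finite.
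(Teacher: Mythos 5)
Your proof is correct, and part (a) is essentially the paper's own argument: the paper also begins by observing that $f$ is quasi u.s.c.\ (dismissed there as ``clearly'', which your superlevel-set verification on $\complement(V\cup W)$ makes explicit, correctly handling extended values and the $0\cdot\infty$ convention), and then runs precisely the proof of Lemma \ref{lemma3.8}(a) — produce $\lambda\in\Gamma^*(f)$ from $\gamma^*(f)=c^*(f)<+\infty$, redefine the majorant $G\lambda\in\cal H_0^*$ on a set of zero outer capacity, and conclude by \cite[Theorem 2.5 (b)]{Fu4} — so your direct citation of Lemma \ref{lemma3.8}(a) is the same substance. Part (b) takes a genuinely different, though parallel, route. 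The paper does not truncate $f$: it writes $G\omega$ as the supremum of an increasing sequence of finite continuous functions $\phi_j\ge0$ supported by compacts $K_j$ (Bourbaki), sets $f_j:=1_{A\cap K_j}\phi_j$, and places $f_j\in\cal H_0^*$ via the quasicompactness of $A\cap K_j$ together with \cite[Lemma 2.4 and Theorem 2.5 (b)]{Fu4}, a step that needs no positive definiteness. You instead set $f_n:=1_{K_n}\min(f,n)$, bound $c^*(f_n)\le n\,c(K_n)<+\infty$, and route membership in $\cal H_0^*$ through Lemma \ref{lemma3.8}(a), which does invoke positive definiteness (via $c^*=\gamma^*$); since the concluding appeal to Corollary \ref{cor3.13} requires positive definiteness anyway, nothing is lost under the lemma's hypotheses. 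Both proofs finish identically: the approximants are $\cal E^+$-measurable and $c$-capacitable, being of class $\cal H_0^*$, and the sequential order continuity from below of $c^*$ and of $c_*$ on $\cal E^+$-measurable functions (Corollary \ref{cor3.13}) yields $c^*(f)=\sup_n c^*(f_n)=\sup_n c_*(f_n)=c_*(f)$, also when the common value is $+\infty$. Your variant buys a somewhat more self-contained argument (no Bourbaki approximation of the l.s.c.\ function $G\omega$, no quasicompactness of $A\cap K_j$), at the cost of the extra — but routine — verification that truncation and multiplication by $1_{K_n}$ preserve quasi upper semicontinuity.
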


\begin{proof} Clearly $f$ is quasi u.s.c.

(a) Since $c^*(f)<+\infty$ we have $\gamma^*(f)=c^*(f)<+\infty$ (Corollary \ref{cor3.13}) and hence there exists a measure $\lambda\in\cal E^+$ such that $G\lambda\ge f$ q.e.\ (and $\|\lambda\|=c^*(f)$). Then $G\lambda\in\cal H_0^*$ (Definition \ref{def3.5}). By redefining $h:=G\lambda$ on a set of zero outer $c$-capacity, whereby $h$ remains of class $\cal H_0^*$, we achieve that $h\ge f$ everywhere in $X$, and since $f$ is quasi u.s.c.\
we conclude from \cite[Theorem 2.5 (b)]{Fu4} that indeed $f\in\cal H_0^*$.

(b) Having a countable base, $X$ is the union of an increasing sequence of compact sets $K_j$, e.g.\ \cite[Section 2, No.\ 1, Corollaire]{Bo2}. Furthermore, $G\omega$ is the pointwise supremum of an increasing sequence of finite continuous functions $\phi_j\ge0$, supported by $K_j$, e.g.\ \cite[Section 2, Proposition 11]{Bo2}. Observe that $\phi_j$, being finite and continuous, is bounded from above. The functions $f_j:=1_{A\cap K_j}\phi_j$ form an increasing sequence of bounded, quasi u.s.c.\ functions with the supremum $f$. Since $A\cap K_j$ is quasicompact, $1_{A\cap K_j}\in\cal H^*_0$ and hence $f_j\in\cal H^*_0$ in view of \cite[Lemma 2.4 and Theorem 2.5 (b)]{Fu4}. As noted shortly before Theorem \ref{thm2.0}, the functions $f_j$ are $\cal E^+$-measurable and $c$-capacitable, and therefore we conclude by Corollary \ref{cor3.13} that indeed $c^*(f)=c_*(f)$.
\end{proof}

\begin{lemma}\label{lemma4.1} Let $(U_n)$ be a cover of $X$ by an increasing sequence of open sets $U_n\subset X$
and let for each $n$, $f\bigl|_{U_n}$ be quasicontinuous, $f\in\cal F^+$ being fixed.
Then $f$ is quasicontinuous on all of $X$.
\end{lemma}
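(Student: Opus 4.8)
The plan is to fix $\eps>0$ and to spend the capacity budget geometrically over the sets $U_n$. First I would apply the quasicontinuity of each restriction $f|_{U_n}$ with tolerance $\eps/2^n$: there is a set $V_n\subset U_n$, which may be taken open (as noted after the definition of quasicontinuity), with $c^*(V_n)<\eps/2^n$ and such that $f|_{U_n\setminus V_n}$ is continuous. Since $U_n$ is open in $X$, a subset open in $U_n$ is open in $X$, so each $V_n$ is open in $X$. Put $V:=\bigcup_n V_n$; then $V$ is open, and by the countable subadditivity of $c^*$,
$$
c^*(V)\le\sum_n c^*(V_n)<\sum_n\eps/2^n=\eps.
$$

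It remains to verify that $f$ restricted to $Y:=X\setminus V$ is continuous. The key point is that, because each $U_n$ is open in $X$, the sets $W_n:=U_n\cap Y=U_n\setminus V$ are open in the subspace $Y$, and they cover $Y$ since $\bigcup_n W_n=\bigl(\bigcup_n U_n\bigr)\setminus V=X\setminus V=Y$, using that $(U_n)$ covers $X$. Moreover $W_n\subset U_n\setminus V_n$ because $V_n\subset V$, so $f|_{W_n}$ is the restriction of the continuous map $f|_{U_n\setminus V_n}$ to a subset and is therefore continuous. Continuity is local relative to an open cover: for open $O$ in the target space, $(f|_Y)^{-1}(O)\cap W_n=(f|_{W_n})^{-1}(O)$ is open in $W_n$, hence open in $Y$, and taking the union over $n$ shows that $(f|_Y)^{-1}(O)$ is open in $Y$. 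Thus $f|_Y$ is continuous, and since $c^*(V)<\eps$ with $\eps>0$ arbitrary, $f$ is quasicontinuous on $X$.

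I do not expect a genuine difficulty here; the argument is essentially a gluing of local continuity through an open cover, combined with the geometric summation that keeps $c^*(V)$ small. The two points that require care are that the exceptional sets $V_n$ may be chosen open in $X$ — which uses the openness of $U_n$ — and that the local-to-global continuity step is carried out in the subspace topology of $Y$, where the $W_n$ are genuinely open. This is precisely where the hypothesis that the $U_n$ be open (rather than merely quasiopen or arbitrary) enters, so I would be careful to flag it rather than treat it as automatic.
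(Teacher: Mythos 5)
Your proof is correct and follows essentially the same route as the paper's: choose open $V_n\subset U_n$ with $c^*(V_n)<\eps/2^n$, set $V=\bigcup_n V_n$, use countable subadditivity of $c^*$, and glue continuity of $f$ on $X\setminus V$ from the open cover $(U_n)$. The only difference is one of exposition—you spell out the subspace-topology gluing and the openness of the $V_n$ in $X$, which the paper compresses into a single sentence.
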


\begin{proof} By the above definition of quasicontinuity there exists for every $n$ an open set $V_n\subset U_n$ such that $f\bigl|_{U_n\setminus V_n}$ is continuous (in the extended sense) and that $c(V_n)<\eps/2^n$. Set $V:=\bigcup_nV_n$ (open in $X$). Then $c(V)\le\sum_n\,c(V_n)<\eps$. Since the open sets $U_n$ cover $X$, while $f\bigl|_{U_n\setminus V}$ is continuous, so is $f\bigl|_{\complement V}$.
\end{proof}

\subsection{Relations between the energy capacity and the standard $G$-capacity.}\label{relations}
In view of Lemma \ref{lemma4.0} and Remark \ref{remark4.0} we propose to find conditions under which the potential $G\omega$ of any
measure $\omega\,\in\cal M^+$
 is quasicontinuous (as a map into $[0,+\infty]$).
For the  proof of the following Theorem \ref{thm4.3} we shall need besides the energy capacity $c$ the \textit{$G$-capacity} $G\text{-}\!\capa$. While our study of the energy capacity $c$, provided above, requires a kernel to be symmetric and ${}>0$ on the diagonal, the $G$-capacity is often studied without these requirements. In this subsection we shall therefore drop these requirements and, unless stated otherwise, allow $G$ to be any l.s.c.\ function $X\times X\to[0,+\infty]$ ($X$ locally compact). Its {\textit{adjoint}} kernel $\check G$ is defined by $\check G(x,y)=G(y,x)$ for $x,y\in X$. The {\textit{symmetric part}} of $G$ is $\frac12(G+\check G)$ and will be denoted by $\widetilde G$.

A kernel $G$ is said to satisfy the \textit{continuity principle} (Evans-Vasilesco), or to be \textit{regular} (Choquet \cite{Ch1}), if for any measure $\mu\in\cal M^+$ of compact support $\supp\mu$, the potential $G\mu$ is finite and continuous on all of $X$ provided  that it is finite and continuous relative to $\supp\mu$.

A kernel $G$ is said to satisfy the {\textit{dilated maximum principle}} if there exists a constant $k\ge1$ such that, for any measure $\mu\in\cal M^+$ of compact support and with potential $G\mu\le1$ $\mu$-a.e., we have $G\mu\le k$ everywhere on $X$. When $k$ is specified we speak of the {\textit{$k$-dilated maximum principle}}. For $k=1$ this is simply called the (Frostman) maximum principle.

 \begin{theorem}\label{thm4.4a} If $G$ is finite off the diagonal and continuous in the extended sense on $X\times X$, and if the restriction of $G$ to $K\times K$ satisfies the dilated maximum principle for every compact $K$, then $G$ is regular.
 \end{theorem}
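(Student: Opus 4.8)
The plan is to reduce the statement to an upper semicontinuity assertion at points of the compact set $S:=\supp\mu$. Since $G$ is l.s.c., every potential $G\mu$ is l.s.c., so it suffices to show that $G\mu$ is finite and upper semicontinuous at each $x_0\in X$. At a point $x_0\notin S$ there is a compact neighbourhood $\overline V$ of $x_0$ disjoint from $S$; on the compact set $\overline V\times S$ the kernel $G$ is finite and continuous, hence bounded and uniformly continuous, so $G\mu(x)=\int_S G(x,y)\,d\mu(y)$ is finite and continuous at $x_0$. Thus only the points $x_0\in S$ require work, and for these $G\mu(x_0)<+\infty$ by the hypothesis that $G\mu$ is finite and continuous relative to $S$.

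First I would dispose of the case $G(x_0,x_0)<+\infty$, which needs no maximum principle. Here continuity of $G$ in the extended sense makes $G$ finite and continuous near $(x_0,x_0)$, hence bounded and uniformly continuous on $\overline V\times\overline V$ for some relatively compact open $V\ni x_0$. Writing $\mu=\mu_V+\rho_V$ with $\mu_V:=\mu|_{\overline V}$ and $\rho_V:=\mu-\mu_V$, uniform continuity of $G$ on $\overline V\times\overline V$ together with $\mu(\overline V)<+\infty$ shows that $G\mu_V$ is continuous at $x_0$, while $G\rho_V$ is continuous at $x_0$ because $\supp\rho_V$ is separated from $x_0$ (the off-diagonal argument used above). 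Hence $G\mu$ is continuous at $x_0$.

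The substantial case is $G(x_0,x_0)=+\infty$. Then $G(x_0,x_0)\,\mu(\{x_0\})\le G\mu(x_0)<+\infty$ forces $\mu(\{x_0\})=0$. For a relatively compact open $V\ni x_0$ put again $\mu_V:=\mu|_{\overline V}$, $\rho_V:=\mu-\mu_V$, so that $G\rho_V$ is continuous at $x_0$. The crux is the local estimate $s_V:=\sup_{\,\supp\mu_V}G\mu_V\to0$ as $V\downarrow\{x_0\}$. To see it, pick $y_V\in\supp\mu_V$ nearly realizing $s_V$; since $\supp\mu_V\subset\overline V$ we have $y_V\to x_0$, and as $y_V\in S$ the assumed continuity of $G\mu$ on $S$ gives $G\mu(y_V)\to G\mu(x_0)$. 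For the complementary potential, fix any neighbourhood $V_1\ni x_0$; for $\overline V\subset V_1$ one has $\rho_V\ge\mu|_{X\setminus V_1}=:\sigma_{V_1}$, whence $G\rho_V(y_V)\ge G\sigma_{V_1}(y_V)\to G\sigma_{V_1}(x_0)$ by continuity of the fixed far potential at $x_0$. Letting $V_1\downarrow\{x_0\}$ and using $\mu(\{x_0\})=0$ gives $G\sigma_{V_1}(x_0)\uparrow G\mu(x_0)$, so $\liminf_V G\rho_V(y_V)\ge G\mu(x_0)$, and therefore $\limsup_V G\mu_V(y_V)=\limsup_V[G\mu(y_V)-G\rho_V(y_V)]\le0$, i.e. $s_V\to0$.

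Finally I would invoke the maximum principle. Fix a compact neighbourhood $K$ of $x_0$; for $V$ small enough $\supp\mu_V\subset K$, so the potential of $\mu_V$ relative to $G|_{K\times K}$ agrees with $G\mu_V$ on $K$, and $G\mu_V\le s_V$ $\mu_V$-a.e. By the dilated maximum principle for $G|_{K\times K}$ (after scaling by $s_V$) we obtain $G\mu_V\le k\,s_V$ throughout $K$, in particular near $x_0$. Using $G\mu=G\mu_V+G\rho_V$ together with the continuity of $G\rho_V$ at $x_0$ and $G\rho_V(x_0)\le G\mu(x_0)$, this gives $\limsup_{x\to x_0}G\mu(x)\le k\,s_V+G\mu(x_0)$ for every small $V$; letting $V\downarrow\{x_0\}$ with $s_V\to0$ yields $\limsup_{x\to x_0}G\mu(x)\le G\mu(x_0)$, and with lower semicontinuity this is continuity (and finiteness) at $x_0$. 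I expect the main obstacle to be exactly the local estimate $s_V\to0$: the difficulty is that the far part $\rho_V$ depends on $V$, so the argument must extract a $V$-independent lower bound for $G\rho_V$ near $x_0$ through the fixed minorant $\sigma_{V_1}$, and it relies essentially on the absence of an atom at $x_0$, which in this case is guaranteed by $G(x_0,x_0)=+\infty$.
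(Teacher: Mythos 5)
The paper itself gives no proof of this theorem: it is stated with attribution to Ohtsuka and (without proof) Choquet, so there is no in-paper argument to compare against. Your blind proof is correct, and it is in substance the classical Ohtsuka-style derivation of the continuity principle from the dilated maximum principle: reduce to upper semicontinuity at points $x_0\in\supp\mu$ via lower semicontinuity of $G\mu$; split $\mu=\mu_V+\rho_V$; prove the key local estimate $s_V:=\sup_{\supp\mu_V}G\mu_V\to0$ by combining relative continuity of $G\mu$ on $\supp\mu$ with the $V$-independent minorant $\sigma_{V_1}\le\rho_V$ and the absence of an atom at $x_0$ (forced by $G(x_0,x_0)=+\infty$); and then spread the smallness from $\supp\mu_V$ to a full neighborhood by the dilated maximum principle on a fixed compact $K$, the fixed constant $k=k(K)$ being harmless since $k\,s_V\to0$. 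Two minor points deserve a word of polish, both routine: the scaling of $\mu_V$ by $s_V$ must treat the degenerate case $s_V=0$ (apply the principle with an arbitrary $\eps>0$ in place of $s_V$ and let $\eps\to0$); and since $X$ need not be metrizable, the monotone step $G\sigma_{V_1}(x_0)\uparrow G\mu(x_0)$ along the neighborhood net should be justified either by inner regularity of the Radon measure $G(x_0,\cdot)\,d\mu$ or by the Bourbaki monotone-convergence property for upward directed families of l.s.c.\ functions, after replacing the closed sets $X\setminus V_1$ by the open sets $X\setminus\overline{V_1}$ (which also makes $1_{X\setminus\overline{V_1}}G(x_0,\cdot)$ l.s.c.). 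With these cosmetic repairs the argument is complete and matches the known proof.
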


This is due to Ohtsuka \cite{O1}, \cite{O2}, \cite[Eq.\ (1.10) (p.\ 155)]{O}, and independently Choquet \cite{Ch1} (without proof).

A  symmetric kernel $G$ which satisfies the maximum principle is positive definite, \cite{Ch3,N}.

The $G$-capacity of a compact set $K\subset X$ is defined by
\begin{equation}\label{G-capa}G\text{-}\!\capa(K)=\sup\,\bigl\{\nu(K):\ \nu\in\cal M^+,\ G\nu\le1\ \text{ everywhere}\bigr\},
\end{equation}
see \cite{Ch2}, \cite[p.\ 43]{Br1}, \cite{O}, \cite{Fu2a}. We further assume that $G$ is \textit{non-degenerate} (in the first variable) in the sense that $G(.,y)\not\equiv0$ for every $y\in X$. (Of course, this assumption is satisfied if $G$ is strictly positive on the diagonal.) Equivalently, $G\mu\not\equiv0$ for every non-zero $\mu\in\cal M^+$. In fact, for given $y\in\supp\mu$ choose $x\in X$ so that $G(x,y)>0$, and open neighborhoods $U$ of $x$ and $V$ of $y$ so that $G>0$ on $U\times V$, and hence $G\mu(x)>0$. Conversely, take $\mu=\eps_y$, hence $G(x,y)=G\mu(x)>0$. We show that this second definition further is equivalent to  the following third definition:
$$
\cal S:=\{\mu\in\cal M^+:G\mu\le1 \text{ on }X\}\quad\text{is vaguely compact.}
$$
In the first place, $\cal S$ is clearly vaguely closed. By the first definition, for any $y\in X$ there exists $x_y\in X$ with $G(x_y,y)>0$. Hence there is an open neighborhood $V_y$ of $y$ such that $G(x_y,z)\ge\tfrac12G(x_y,y)$ for all $z\in V_y$. It follows that $\tfrac12G(x_y,y)\mu(V_y)\le\int_{V_y}\,G(x_y,z)\,d\mu(z)\le G\mu(x_y)\le1$
for every $\mu\in\cal S$, and so $\mu(V_y)$ is bounded as a function of $\mu\in\cal S$. Since finitely many $V_y$ cover $K$ this shows that the function $\mu\mapsto\mu(K)$ indeed is bounded on $\cal S$, and so $\cal S$ is compact. Conversely, if $\cal S$ is compact it cannot contain a ray $\{t\mu:t\in[0,+\infty[\,\}$, as it would if $G\mu\equiv0$ for some non-zero $\mu\in\cal M^+$.

This proves that the supremum in the definition of $G\text{-}\!\capa(K)$ is attained and hence finite when $G$ is non-degenerate.

From now on, $G$ is assumed to be strictly positive on the diagonal; then so is $\widetilde G$.
Recall that the energy capacity of a compact set $K\subset X$ for a (symmetric) kernel $\widetilde G$, strictly positive on the diagonal, is characterized
by either of the following two equalities:
\begin{align}\label{4.0'}
c_{\widetilde G}(K)&=\max\,\Bigl\{\nu(K):\ \nu\in\cal E^+,\ \int \widetilde{G}\nu\,d\nu\le1\Bigr\}\ (<+\infty),\\
\label{4.0''}
c_{\widetilde G}(K)^2&=\max\,\bigl\{\nu(K):\ \nu\in\cal E^+,\ \widetilde G\nu\le1\ \text{$\nu$-a.e.\ on $X$}\bigr\}\ (<+\infty),
\end{align}
(cf.\ Theorem \ref{thm2.0} or Eq.\ (\ref{2.5}), respectively, for the kernel $\widetilde G$ and functions of class $\cal H_0$, in particular for (indicator functions for) compact sets). Then every maximizing measure $\nu$ for any of (\ref{G-capa}), (\ref{4.0'}), (\ref{4.0''})
is  clearly carried by $K$ and has finite energy. We show that
\begin{equation}\label{4.1'}
c_{\widetilde G}(K)^2\ge G\text{-}\!\capa(K).
\end{equation}
Let $\nu$ be a maximizing measure for $G\text{-}\!\capa(K)$. Then $\int G\nu\,d\nu\le\nu(K)$. We may assume that $\int G\nu\,d\nu>0$, for otherwise $\nu=0$ by Remark \ref{remark2.0}, and hence $G\text{-}\!\capa(K)=0$. Clearly, $\int G\nu\,d\nu=\int\widetilde G\nu\,d\nu$, also denoted by $\|\nu\|^2$. Writing $\mu:=\nu/\|\nu\|$ we have $\|\mu\|=1$, so $\mu$ competes for $c_{\widetilde G}(K)$ in (\ref{4.0'}). It follows that
$$
c_{\widetilde G}(K)\ge\mu(K)=\frac{\nu(K)}{\|\nu\|}\ge\sqrt{\nu(K)}=\sqrt{G\text{-}\!\capa(K)}
$$
because $\|\nu\|^2=\int G\nu\,d\nu\le\nu(K)$. This establishes (\ref{4.1'}).

If $G$ satisfies the $k$-dilated maximum principle we obtain an inequality in the opposite direction. Let $\nu$ be maximizing for $c_{\widetilde G}(K)^2$ in (\ref{4.0''}).
Since $G\nu\le 2\widetilde G\nu$ it follows that $G\mu\le1$ everywhere, writing $\mu:=\nu/(2k)$. Thus $\mu$ competes for $G\text{-}\!\capa(K)$, and so $G\text{-}\!\capa(K)\ge\mu(K)=\nu(K)/(2k)=c_{\widetilde G}(K)^2/(2k)$, whence
\begin{equation}\label{4.0}
  c_{\widetilde G}(K)^2\le2k\,G\text{-}\!\capa(K).
\end{equation}

In \cite{Ch1}, \cite{Ch2}, Choquet has studied relations between $G\text{-}\!\capa$ and $\check G\text{-}\!\capa$ (without assuming $G(x,x>0$). It is interesting (and perhaps more or less new) that such relations have close counterparts in relations between $G\text{-}\!\capa$ and the energy capacity $c_{\widetilde G}$. The inequality (\ref{4.0}) is a counterpart to the inequality $\check G\text{-}\!\capa(K)\le 4k\,G\text{-}\!\capa(K)$ established in \cite[Lemme 4]{Ch2}. We proceed to establish two more advanced relationships (Theorems \ref{thm4.3} and \ref{thm4.3a} below) between $G\text{-}\!\capa$ and $c_{\widetilde G}$ (counterparts to \cite[Proposition 2, Th\'eor\`eme 1]{Ch2}).

Following \cite{Ch1} a point $x_0\in X$ is called a point of $k$-\textit{undulation} for $G$ ($k>1$ real)
if for any neighborhood $V$ of $x_0$ there exists a measure $\mu$ on a compact set $K\subset V$ such that $G\mu$ is bounded on $K$ and
$$
\sup_{x\in V}\,G\mu(x)\ge k\sup_{x\in K}\,G\mu(x).
$$
We denote by $\cal O_k$ the set of all points of $k$-undulation for $G$, and by $\cal O_\infty:=\bigcap_{k>1}\cal O_k$ the set of all points of so-called \textit{strong undulation}. Clearly, $\cal O_k$ and $\cal O_\infty$ are closed subsets of $X$, and every point of $\complement\cal O_\infty$ has an open neighborhood $V$ such that $G\bigl|_{V\times V}$ satisfies the dilated maximum principle.

\begin{definition}\label{stronglyregular} A regular kernel $G$ on $X$ is said to be \textit{strongly regular} if $G$ is finite and continuous off the diagonal and strictly positive on the diagonal, and if at least one limit point of $\cal O_\infty$ has a countable base of neighborhoods (e.g.\ if $X$ is first countable).
\end{definition}

The following theorem \ref{thm-und} was established by Ohtsuka \cite[Corollary 2, p.\ 170]{O}, inspired by a similar result announced by Choquet \cite{Ch1}, \cite{Ch2}.

\begin{theorem}\label{thm-und} If $G$ is strongly regular then $\cal O_\infty$ is discrete.
\end{theorem}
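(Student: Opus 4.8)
The plan is to argue by contradiction, exploiting the tension between strong undulation (which forces a potential to be far larger just outside its support than on it) and regularity (which forbids a potential that is finite and continuous on its support from blowing up nearby). Suppose $\mathcal O_\infty$ is not discrete. Being closed, it then has at least one limit point, and since strong regularity guarantees that at least one such limit point has a countable base of neighbourhoods, I may fix a limit point $x_0$ of $\mathcal O_\infty$ that is first countable. Using first countability at $x_0$ together with the Hausdorff property and local compactness of $X$, I would extract a sequence of distinct points $x_j\in\mathcal O_\infty\setminus\{x_0\}$ with $x_j\to x_0$ and, inductively, relatively compact open neighbourhoods $V_j\ni x_j$ that are pairwise disjoint, satisfy $x_0\notin\overline{V_j}$, and shrink to $x_0$ in the sense that every neighbourhood of $x_0$ contains all but finitely many $\overline{V_j}$.

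At each $x_j$, since $x_j\in\mathcal O_\infty\subset\mathcal O_{k_j}$ for any exponent $k_j$, I would invoke $k_j$-undulation in $V_j$ to obtain a measure $\nu_j$ supported on a compact $K_j\subset V_j$ whose potential is bounded on $K_j$, together with a point $z_j\in V_j\setminus K_j$ satisfying $G\nu_j(z_j)\ge\tfrac{k_j}{2}\sup_{K_j}G\nu_j$; I take $k_j\to\infty$ rapidly. Rescaling each $\nu_j$ by a constant $t_j>0$, I normalise so that the self-potential satisfies $\sup_{K_j}G(t_j\nu_j)=\varepsilon_j$ with $\sum_j\varepsilon_j<\infty$ and total masses $\sum_j t_j\nu_j(K_j)<\infty$, while still arranging $\tfrac{k_j}{2}\varepsilon_j\to\infty$; then $G(t_j\nu_j)(z_j)\ge\tfrac{k_j}{2}\varepsilon_j\to\infty$. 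Setting $\mu:=\sum_j t_j\nu_j$, the support $S:=\{x_0\}\cup\bigcup_j K_j$ is compact, $x_0\in S$, and $z_j\to x_0$ with $G\mu(z_j)\ge G(t_j\nu_j)(z_j)\to\infty$.

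The core of the argument is to verify that $G\mu$ is finite and continuous relative to $S$, so that the continuity principle (regularity of $G$) forces $G\mu$ to be finite and continuous on all of $X$, hence locally bounded at $x_0$, contradicting $G\mu(z_j)\to\infty$ along $z_j\to x_0\in S$. Here I would use that the $K_i$ accumulate only at $x_0$: for each fixed $j$ the set $K_j$ lies at positive distance from $x_0$, so $K_j\times\overline{\bigcup_{i\neq j}K_i}$ is a compact subset of the off-diagonal region, on which $G$ is finite and continuous, hence bounded by some $M_j$. Consequently the cross-potential $\sum_{i\neq j}G(t_i\nu_i)$ is dominated on $K_j$ by $M_j\sum_i t_i\nu_i(K_i)$ and, as a uniformly convergent series of functions continuous off the diagonal, is continuous on $K_j$; smallness of the total mass makes it small as well. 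Continuity of $G\mu$ at $x_0$ along $S$ then follows from $\varepsilon_j\to0$ together with the decay of these cross-terms.

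The main obstacle I anticipate lies entirely in this continuity-on-$S$ step, and it is of a single kind: the undulation hypothesis supplies only \emph{boundedness} of each self-potential $G\nu_j$ on $K_j$, whereas the continuity principle requires \emph{continuity} on the support, and near $x_0$ the kernel may be $+\infty$ on the diagonal. I would resolve this by replacing each undulation measure at the outset by a nearby measure whose potential is continuous on its support, absorbing the resulting bounded loss into the undulation exponent (harmless since $k_j\to\infty$); the finiteness and continuity of $G$ off the diagonal, combined with regularity applied to each $\nu_j$ individually on $K_j=\mathrm{supp}\,\nu_j$, is exactly what makes such a replacement legitimate. The delicate balancing act is then to keep the self-contributions and all cross-contributions simultaneously summable and continuous on $S$ while preserving the divergence $\tfrac{k_j}{2}\varepsilon_j\to\infty$ at the $z_j$; this is achieved by the inductive choice of the $V_j$ (forcing $\overline{V_j}\to x_0$ off-diagonally) together with the freedom to shrink each $t_j$ at stage $j$.
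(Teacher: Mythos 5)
The paper offers no proof of this theorem at all: it is quoted from Ohtsuka \cite[Corollary 2, p.\ 170]{O} (announced earlier by Choquet \cite{Ch1}, \cite{Ch2}), so the comparison must be with that classical argument, and your proposal does follow its route: disjoint shrinking neighbourhoods $V_j$ of undulation points $x_j\to x_0$ (where $x_0$ is a limit point of $\cal O_\infty$ with countable neighbourhood base, as Definition \ref{stronglyregular} guarantees), undulation measures $\nu_j$ with exponents $k_j\to\infty$, rescaling by $t_j$ so that $\mu=\sum_j t_j\nu_j$ has compact support $S$, potential finite and continuous relative to $S$, yet $G\mu(z_j)\to\infty$ along $z_j\to x_0\in S$, contradicting the continuity principle. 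The skeleton, including the use of off-diagonal finiteness and continuity of $G$ to control all cross-terms and the interleaved induction (shrink $V_i$ inside a neighbourhood of $x_0$ on which the finitely many previously built potentials $\sum_{j<i}t_jG\nu_j$ vary by less than $2^{-i}$), is correct.

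There is, however, a genuine flaw in your resolution of the one obstacle you rightly single out. You propose to replace each $\nu_j$ by a measure with potential continuous on its support via ``regularity applied to each $\nu_j$ individually on $K_j$'' --- but regularity runs the wrong way: the continuity principle upgrades continuity \emph{relative to the support} to global continuity, and gives nothing when $G\nu_j$ is merely \emph{bounded} on $K_j$, which is all the undulation hypothesis provides. The correct tool is a Lusin-type restriction lemma: $G\nu_j$ is Borel and bounded on $K_j$, so there is a compact $K_j'\subset K_j$ with $\nu_j(K_j\setminus K_j')$ as small as desired on which $G\nu_j|_{K_j'}$ is continuous; then $G(\nu_j|_{K_j'})=G\nu_j-G(\nu_j|_{K_j\setminus K_j'})$ is u.s.c.\ relative to $K_j'$ (continuous minus l.s.c.) as well as l.s.c., hence continuous relative to $K_j'$. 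The quantifier order matters: first fix $z_j$ (necessarily $z_j\notin K_j$, since $k_j>1$ forces the near-supremum of $G\nu_j$ over $V_j$ off $K_j$); since $G$ is finite and continuous off the diagonal, $C_j:=\sup_{y\in K_j}G(z_j,y)<+\infty$, and choosing $\nu_j(K_j\setminus K_j')<\tfrac{k_j}{4}\sup_{K_j}G\nu_j\big/C_j$ guarantees that the discarded piece contributes at most $\tfrac{k_j}{4}\sup_{K_j}G\nu_j$ at $z_j$, so the restricted measure still undulates with ratio $k_j/4$ --- a multiplicative (not ``bounded'') loss, harmless since $k_j\to\infty$. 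A second point you only gesture at also needs care: $k_j$ must be fixed \emph{before} $\nu_j$ is produced, while the admissible size of $t_j$ (hence the normalized supremum $\varepsilon_j$, hence the needed size of $k_j$) appears to depend on $\nu_j$. The circle is broken by observing that all smallness constraints on $t_j$ only bound $t_j\nu_j(X)$ through constants $\sup_{\overline V_i\times\overline V_j}G$ ($i<j$) and $\sup_{y\in\overline V_j}G(x_0,y)$ depending solely on the already-fixed neighbourhoods, while strict positivity of $G$ on the diagonal yields $\sup_{K_j}G\nu_j\ge c_j\,\nu_j(X)$ with $c_j>0$ depending only on $\overline V_j$ (cover $\overline V_j$ by finitely many sets $U$ with $G>m>0$ on $U\times U$); this gives an a priori lower bound $\varepsilon_j^0>0$ for the achievable $\varepsilon_j$, so one may set $k_j:=4j/\varepsilon_j^0$ in advance --- and it is precisely $x_j\in\cal O_\infty=\bigcap_{k>1}\cal O_k$ that makes undulation available for this prescribed $k_j$. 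With these two repairs your argument goes through.
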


\begin{theorem}\label{thm4.3}
Let $X$ be compact. Suppose that $G$ is strongly regular and that $G(x,x)=+\infty$ for every point $x\in\cal O_\infty$. Then the outer $G$-capacity $G\text{-}\!\capa^*$ and the outer energy capacity $c^*_{\widetilde G}$ are simultaneously small. Explicitly, for any $\eps>0$ there exists $\eta>0$ such that for every subset $A$ of $X$
$$
G\text{-}\!\capa^*(A)<\eta\quad\text{implies}\quad c_{\widetilde G}^*(A)<\eps,
$$
and similarly with the two capacities interchanged.
\end{theorem}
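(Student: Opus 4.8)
The plan is to prove the two implications separately; the one with the capacities interchanged, namely $c_{\widetilde G}^*(A)<\eta\Rightarrow G\text{-}\!\capa^*(A)<\eps$, is the easy half. For it I would promote the compact-set inequality (\ref{4.1'}) to outer capacities: taking the supremum over compact $K\subset U$ gives $c_{\widetilde G}(U)^2\ge G\text{-}\!\capa(U)$ for every open $U$, and the infimum over open $U\supset A$ then yields $G\text{-}\!\capa^*(A)\le c_{\widetilde G}^*(A)^2$, so that $\eta=\sqrt\eps$ suffices. For the substantial implication $G\text{-}\!\capa^*(A)<\eta\Rightarrow c_{\widetilde G}^*(A)<\eps$ I would fix $\eps>0$, choose an open $U\supset A$ with $G\text{-}\!\capa(U)<\eta$, and reduce to proving $c_{\widetilde G}(K)<\eps$ for every compact $K\subset U$; since $U$ is open, hence $c$-capacitable with $c_{\widetilde G}^*(U)=\sup\{c_{\widetilde G}(K):K\subset U\}$, this yields $c_{\widetilde G}^*(A)\le c_{\widetilde G}^*(U)\le\eps$.

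The first structural input is that $\mathcal O_\infty$ is \emph{finite}: by Theorem \ref{thm-und} it is discrete, and being also closed in the compact space $X$ it is a finite set $\{x_1,\dots,x_m\}$. The hypothesis $\widetilde G(x_i,x_i)=G(x_i,x_i)=+\infty$ makes each $x_i$ negligible for the energy capacity: as in Remark \ref{remark2.0}, any $\nu\in\cal E^+$ with $\nu(\{x_i\})>0$ would satisfy $\|\nu\|^2\ge\nu(\{x_i\})^2\,\widetilde G(x_i,x_i)=+\infty$, so $\nu(\{x_i\})=0$ for all $\nu\in\cal E^+$ and hence $c_{\widetilde G}(\{x_i\})=0$ by Theorem \ref{thm2.0}. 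By outer regularity I would then pick open sets $W_i\ni x_i$ with $c_{\widetilde G}^*(W_i)<\eps/(2m)$ and set $W=\bigcup_iW_i$, an open neighborhood of $\mathcal O_\infty$ with $c_{\widetilde G}^*(W)<\eps/2$ by countable subadditivity of $c^*$. The complement $\complement W$ is compact and disjoint from $\mathcal O_\infty$.

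The core is a local version of (\ref{4.0}) on $\complement W$. Since every point of $\complement\mathcal O_\infty$ has a neighborhood on which $G$ obeys a dilated maximum principle, compactness of $\complement W$ furnishes a finite cover $V_1,\dots,V_N$ with $G|_{V_j\times V_j}$ satisfying the $k_j$-dilated maximum principle; set $k=\max_jk_j$. Shrinking the cover gives compact $L_j\subset V_j$ with $\bigcup_j\operatorname{int}L_j\supset\complement W$, and since $L_j$ and $\complement V_j$ are disjoint compacta, $M:=\max_j\sup\{G(x,y):x\in\complement V_j,\ y\in L_j\}<+\infty$ by continuity of $G$ off the diagonal. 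For a compact $K\subset L_j$ I would take a maximizer $\nu$ for $c_{\widetilde G}(K)^2$ as in (\ref{4.0''}); then $G\nu\le2\widetilde G\nu\le2$ $\nu$-a.e., so $\mu_0:=\nu/2$ has $G\mu_0\le1$ $\mu_0$-a.e. The local principle bounds $G\mu_0\le k$ on $V_j$, while for $x\in\complement V_j$ only the crude estimate $G\mu_0(x)\le\tfrac12M\nu(K)=\tfrac12M\,c_{\widetilde G}(K)^2$ is available. Normalizing $\mu_0$ by $\beta:=\max\{k,\tfrac12M\,c_{\widetilde G}(K)^2\}$ makes $\mu_0/\beta$ a competitor for $G\text{-}\!\capa(K)$, whence $G\text{-}\!\capa(K)\ge c_{\widetilde G}(K)^2/(2\beta)$, which forces the dichotomy: either $c_{\widetilde G}(K)^2\le 2k\,G\text{-}\!\capa(K)$ or $G\text{-}\!\capa(K)\ge 1/M$.

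Finally, I would impose $\eta<\min\{1/M,\ \eps^2/(8kN^2)\}$. For a compact $K\subset U$ the set $K\setminus W$ is compact in $\complement W$, so $K\setminus W=\bigcup_j\bigl((K\setminus W)\cap L_j\bigr)$ with each piece compact in $L_j$ and of $G$-capacity at most $G\text{-}\!\capa(K)\le G\text{-}\!\capa(U)<\eta<1/M$; the second alternative of the dichotomy is thereby excluded, so $c_{\widetilde G}\bigl((K\setminus W)\cap L_j\bigr)^2\le 2k\,G\text{-}\!\capa(K)<2k\eta$. Countable subadditivity gives $c_{\widetilde G}(K\setminus W)<N\sqrt{2k\eta}<\eps/2$, while $c_{\widetilde G}^*(K\cap W)\le c_{\widetilde G}^*(W)<\eps/2$, so $c_{\widetilde G}(K)<\eps$; taking the supremum over $K$ yields $c_{\widetilde G}^*(A)\le\eps$, and running the argument with $\eps/2$ gives the strict inequality. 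I expect the main obstacle to be exactly this dichotomy step: the dilated maximum principle is only local, controlling $G\mu_0$ on $V_j$ but not on $\complement V_j$, and the point is that recovering a \emph{global} bound on the potential—needed to produce an admissible competitor for $G\text{-}\!\capa(K)$—either succeeds with the uniform constant $2k$ or else pins $G\text{-}\!\capa(K)$ below by $1/M$, a case ruled out once $\eta<1/M$. The hypothesis $G(x_i,x_i)=+\infty$ is precisely what permits the preliminary excision of a $c_{\widetilde G}$-small neighborhood $W$ of the undulation set, without which no uniform local cover of the relevant region would be available.
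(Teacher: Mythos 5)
Your argument is correct, and its skeleton coincides with the paper's: the easy implication by promoting (\ref{4.1'}) to outer capacities; for the hard one, finiteness of $\cal O_\infty$ (Theorem \ref{thm-und} plus compactness of $X$), $c_{\widetilde G}(\cal O_\infty)=0$ from $G(x,x)=+\infty$, excision of an open neighborhood of $\cal O_\infty$ of energy capacity less than $\eps/2$, a dilated-maximum-principle comparison of the two capacities on the compact complement, and subadditivity. Where you genuinely diverge is the local-to-global step, and your treatment is the more careful one. The paper simply asserts that the restriction of $G$ to $K\times K$, $K=\complement\omega$, satisfies a single $k$-dilated maximum principle (citing only the remark that every point of $\complement\cal O_\infty$ has such a neighborhood) and then applies (\ref{4.0}) wholesale; this leaves two points tacit: passing from local principles to one uniform constant on a compact set is not a formal patching (the masses of the pieces of a decomposed measure are uncontrolled), and a measure admissible for the restricted kernel has $G\nu\le1$ only on $K$, whereas (\ref{G-capa}) demands the bound everywhere on $X$. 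Your finite cover $V_1,\dots,V_N$, the shrunken compacta $L_j$, the off-diagonal bound $M$ (legitimate under strong regularity, since $\complement V_j\times L_j$ is a compact set avoiding the diagonal on which $G$ is finite and continuous), and the resulting dichotomy---either $c_{\widetilde G}(K')^2\le2k\,G\text{-}\!\capa(K')$ or $G\text{-}\!\capa(K')\ge1/M$, the latter excluded once $\eta<1/M$---handle both points simultaneously, because the renormalized extremal measure $\mu_0/\beta$ is admissible on all of $X$. The price is the messier threshold $\eta<\min\{1/M,\,\eps^2/(8kN^2)\}$ in place of the paper's clean $\eta=\eps^2/(8k)$; the gain is a self-contained proof that does not presuppose a global dilated maximum principle on $\complement\omega$ (which the paper in effect outsources to Ohtsuka and Choquet). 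The auxiliary facts you invoke---$\nu(\{x_i\})=0$ for every $\nu\in\cal E^+$ when $G(x_i,x_i)=+\infty$, outer regularity at compacta, capacitability of open sets with $c^*$ equal to the supremum over compact subsets, and monotonicity of $G\text{-}\!\capa$---are all available in the paper's framework, so I see no gap.
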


\begin{proof} The latter case follows from (\ref{4.1'}) extended to outer capacities of arbitrary sets $A$. For the former case, according to Theorem \ref{thm-und} above, $\cal O_\infty$ is discrete, hence finite, by compactness of $X$. For any $x\in\cal O_\infty$, $G(x,x)=+\infty$ by hypothesis, that is, $\eps_x$ has infinite energy, hence $c_{\widetilde G}(\{x\})=0$, and altogether $c_{\widetilde G}(\cal O_\infty)=0$. For given $\eps>0$ fix an open set $\omega\supset\cal O_\infty$ so that $c_{\widetilde G}(\omega)<\eps/2$. The set $K:=\complement\omega$ is compact along with $X$.  The restriction of $G$ to $K\times K$ satisfies the $k$-dilated maximum principle for some constant $k\,=k(\eps)\,\ge1$, (cf.\ text just before Definition \ref{stronglyregular}). Now define $\eta={\eps^2}/{8k}$. For any $A\subset X$ with $G\text{-}\!\capa^*(A)<\eta$ we therefore have by extension of (\ref{4.0}) to outer capacity of arbitrary sets $c_{\widetilde G}^*(A\cap K)^2\le 2k\,G\text{-}\!\capa^*(A\cap K)\le\eps^2/4$, and so indeed
$$
c_{\widetilde G}^*(A)\le c_{\widetilde G}^*(A\cap\omega)+c_{\widetilde G}^*(A\cap K)\le\frac{\eps}2+\frac{\eps}2=\eps.
$$
\end{proof}

In Section \ref{quasi} we have defined `quasitopological' concepts such as a quasicontinuous function (with respect to outer energy capacity $c^*$
for a symmetric kernel). There are similar concepts with $c^*$ replaced by any other \textit{outer capacity} $C$ on $X$, cf.\ \cite[Section 1.5]{Fu3}.

\begin{theorem}\label{thm4.3a} Let $X$ be countable at infinity. Suppose that $G$ is strongly regular and that $G(x,x)=+\infty$ for every point $x\in\cal O_\infty$. The $G$-potential of any $\omega\in\cal M^+$ is quasicontinuous with respect to the (outer) energy capacity $c^*_{\widetilde G}(\cdot)$.
\end{theorem}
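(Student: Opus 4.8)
The plan is to combine three ingredients: Choquet's quasicontinuity theorem \cite{Ch2}, which controls exceptional sets in the outer $G$-capacity $G\text{-}\!\capa^*$; the comparison of Theorem \ref{thm4.3} between $G\text{-}\!\capa^*$ and $c^*_{\widetilde G}$; and the localization principle of Lemma \ref{lemma4.1}, which bridges the gap between the compactness hypothesis of Theorem \ref{thm4.3} and the present hypothesis that $X$ be countable at infinity. Accordingly, I would first reduce to a relatively compact situation: writing $X=\bigcup_n U_n$ as an increasing union of relatively compact open sets with compact closures $K_n:=\overline{U_n}$ (possible since $X$ is countable at infinity), Lemma \ref{lemma4.1} reduces the claim to showing that $G\omega|_{U_n}$ is quasicontinuous with respect to $c^*_{\widetilde G}$ for each fixed $n$.

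Next I would produce the exceptional sets in the $G$-capacity. Being strongly regular, $G$ is in particular regular, i.e.\ satisfies the continuity principle of Evans--Vasilesco, and being strictly positive on the diagonal it is non-degenerate, so that $G\text{-}\!\capa$ is well defined. Hence Choquet's theorem \cite{Ch2} (whose compactness assumption is removable, as noted there) applies to the l.s.c.\ kernel $G$ and shows that $G\omega$ is quasicontinuous on $X$ with respect to $G\text{-}\!\capa^*$: for every $\eps>0$ there is an open set $V\subset X$ with $G\text{-}\!\capa^*(V)$ as small as desired and $G\omega|_{\complement V}$ continuous in the extended sense.

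It then remains to convert smallness in $G\text{-}\!\capa^*$ into smallness in $c^*_{\widetilde G}$ on subsets of $U_n$, which is where Theorem \ref{thm4.3} enters, applied on the compact space $K_n$. The essential point is that $\mathcal O_\infty\cap K_n$ is finite, being discrete by Theorem \ref{thm-und} and contained in the compact $K_n$, and that each of its points $x$ satisfies $G(x,x)=+\infty$, whence $c_{\widetilde G}(\{x\})=0$. Covering $\mathcal O_\infty\cap K_n$ by an open set of arbitrarily small $c^*_{\widetilde G}$ and invoking the $k$-dilated maximum principle on the complementary compact part, exactly as in the proof of Theorem \ref{thm4.3}, yields for each $\eps>0$ a threshold $\eta>0$ such that $G\text{-}\!\capa^*(A)<\eta$ implies $c^*_{\widetilde G}(A)<\eps$ for all $A\subset U_n$. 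Taking $W:=V\cap U_n$, which is open and contained in $U_n$, monotonicity gives $G\text{-}\!\capa^*(W)\le G\text{-}\!\capa^*(V)<\eta$, hence $c^*_{\widetilde G}(W)<\eps$, while $G\omega|_{U_n\setminus W}$ is continuous as a restriction of $G\omega|_{\complement V}$. Letting $\eps\to0$ shows $G\omega|_{U_n}$ is quasicontinuous with respect to $c^*_{\widetilde G}$, and Lemma \ref{lemma4.1} then delivers quasicontinuity on all of $X$.

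The step I expect to be the main obstacle is precisely this last conversion, because Theorem \ref{thm4.3} is a statement about a compact ambient space, so that applying it on $K_n$ forces a comparison between the capacities $G\text{-}\!\capa^*$ and $c^*_{\widetilde G}$ computed on $K_n$ (with the restricted kernel) and those computed on $X$. For $A\subset U_n$ the energy capacity is essentially intrinsic, since the measures competing for $c_{\widetilde G}(A)$ are carried by compact subsets of $A\subset K_n$ and their energy involves only $\widetilde G$ on $K_n\times K_n$, and $G\text{-}\!\capa^*$ is monotone, so the smallness does pass in the required direction; but making these identifications precise --- in particular for the outer capacity, defined through l.s.c.\ envelopes as in (\ref{2.3}) --- is the delicate matter. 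It is cleanest to re-run the short argument of Theorem \ref{thm4.3} directly for subsets of the fixed relatively compact $U_n$, rather than to quote that theorem verbatim.
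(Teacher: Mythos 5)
Your skeleton (exhaustion by relatively compact $U_n$, Choquet's theorem, a Theorem~\ref{thm4.3}-type comparison, patching by Lemma~\ref{lemma4.1}) is the same as the paper's, and you correctly sense where the difficulty sits --- but the step you propose there does not close, because of a mismatch between \emph{which} $G$-capacity each ingredient controls. You invoke Choquet's theorem globally on the noncompact $X$, so your exceptional open set $V$ is small in the \emph{global} outer capacity $G\text{-}\!\capa^*$, computed with the constraint $G\nu\le1$ on all of $X$. Your conversion step, re-running the proof of Theorem~\ref{thm4.3} on the compact $K_n=\overline U_n$, necessarily works with the kernel $G_n:=G\bigl|_{\overline U_n\times\overline U_n}$: the dilated maximum principle available there bounds potentials only \emph{on} (a compact part of) $K_n$, so the normalized measure competes only for the capacity relative to the restricted kernel, not for the global one. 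Since the restricted capacity is defined by a weaker constraint, one has $G_n\text{-}\!\capa^*\ge G\text{-}\!\capa^*$ on subsets of $U_n$, so smallness of the global capacity of $W=V\cap U_n$ does \emph{not} yield the smallness of the restricted capacity that your re-run of Theorem~\ref{thm4.3} requires; the inequality runs the wrong way. The $G$-type capacities simply do not localize --- the paper flags this explicitly in its proof (``unlike what would be the case for $\check G\text{-}\!\capa$''); only the \emph{energy} capacity localizes, $c_{\widetilde G}(K)=c_{\widetilde G_n}(K)$ for compact $K\subset\overline U_n$, because the energy of a measure on $K$ depends only on $G\bigl|_{K\times K}$.

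The paper's proof avoids this mismatch by keeping both applications on the same compact space with the same restricted kernel: it splits $\omega=\omega_n+(\omega-\omega_n)$ with $\omega_n:=\omega\bigl|_{\overline U_n}$, applies Choquet's Th\'eor\`eme 1 to the restricted kernel $G_n$ and the restricted measure $\omega_n$ on the compact $\overline U_n$ --- yielding quasicontinuity of $G_n\omega_n$ with respect to the $\check G_n$-capacity on $\overline U_n$ (note the adjoint kernel: symmetry of $G$ is dropped in this subsection, a second point your write-up elides by working with $G\text{-}\!\capa^*$ throughout) --- and then applies Theorem~\ref{thm4.3} to $\check G_n$ on $\overline U_n$, converting to $c^*_{\widetilde G_n}=c^*_{\widetilde G}$. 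The tail is handled by observing that $G(\omega-\omega_n)$ is actually \emph{continuous} on $U_n$, since $\supp(\omega-\omega_n)\subset\complement U_n$ and $G$ is finite and continuous off the diagonal; this splitting is precisely how ``the compactness assumption is easily removed'' is implemented, so invoking the noncompact version of \cite{Ch2} as a black box assumes the very localization you then cannot supply. Your argument becomes correct if you replace the single global appeal to \cite{Ch2} by its application to $(G_n,\omega_n)$ on each $\overline U_n$, track the adjoint capacity $\check G_n\text{-}\!\capa^*$, and add the off-diagonal continuity argument for the tail $G(\omega-\omega_n)$.
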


\begin{proof} The requirement that $X$ be countable at infinity means that $X$ can be covered by a sequence of compact subsets, and hence also by an increasing sequence of open relatively compact sets $U_n\subset X$ such that $\overline U_n\subset U_{n+1}$, e.g.\ \cite[Chap.\ 1, Sect.\ 9, Prop.\ 15]{Bo}. Write $G_n=G\bigl|_{\overline{U_n}\times\overline{U_n}}$ and denote by $\omega_n$ the restriction of $\omega$ to $\overline U_n$. Then $G_n$ is strongly regular along with $G$.  By \cite[Th\'eor\`eme 1]{Ch2} applied to the regular kernel $G_n$, the potential $G_n\omega_n$ is finite q.e.\ and quasicontinuous on $\overline U_n$ with respect to the $G$-capacity relative to the kernel $\check G_n$ on the compact space $\overline U_n$, in view of Theorem \ref{thm4.3} applied to the kernel $\check G_n$ in place of $G$, and finally also with respect to the energy capacity $c^*_{\widetilde G}$ on $X$. In fact (unlike what would be the case for $\check G\text{-}\!\capa$), we have for any compact set $K\subset\overline U_n$,  $c_{\widetilde{G}}(K)=c_{\widetilde{G}_n}(K)$ because the energy $\int G\nu\,d\nu$ of any measure $\nu$ on $K$, and hence the corresponding Gauss integral, only depend on the restriction of $G$ to $K\times K$.
 For any open set $V\subset U_n$  we therefore have $c_{\widetilde{G}}(V)=c_{\widetilde{G}_n}(V)$ (open sets being obviously capacitable with respect to energy capacity). Thus $G\omega_n=G_n\omega_n$ is quasicontinuous on $U_n$ with respect to $c^*_{\widetilde G}(\cdot)$. And $G(\omega-\omega_n)$ is even continuous on $U_n$ (which does not meet $\supp(\omega-\omega_n)$) in view of the finiteness and continuity of $G$ off the diagonal. Consequently, the sum $G\omega$ of these two quasicontinuous $G$-potentials on $U_n$ is quasicontinuous on $U_n$ with respect to $c^*_{\widetilde G}$. Finally, by Lemma \ref{lemma4.1}, $G\omega$ is indeed quasicontinuous on all of $X$ with respect to $c^*_{\widetilde G}$.\end{proof}

\begin{cor}\label{cor4.3b} Under the hypotheses of Theorem\/ {\rm\ref{thm4.3a}} we have for any subset $E$ of $X$
$$
G\text{-}\!\capa^*(E)=0\quad\text{if and only if}\quad c_{\widetilde G}^*(E)=0.
$$
\end{cor}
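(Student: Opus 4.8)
The plan is to prove the two implications separately, the forward one being immediate and the reverse requiring a localization to the compact pieces of an exhaustion. Recall first that inequality (\ref{4.1'}) extends to outer capacities of arbitrary sets, exactly as already used in the proof of Theorem \ref{thm4.3}; thus $G\text{-}\!\capa^*(E)\le c_{\widetilde G}^*(E)^2$ for every $E\subset X$, and so $c_{\widetilde G}^*(E)=0$ forces $G\text{-}\!\capa^*(E)=0$. This settles one direction with no further work.

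For the converse, assume $G\text{-}\!\capa^*(E)=0$ and aim at $c_{\widetilde G}^*(E)=0$. I would reuse the exhaustion from the proof of Theorem \ref{thm4.3a}: an increasing sequence of relatively compact open sets $U_n$ with $\overline{U_n}\subset U_{n+1}$ and $\bigcup_n U_n=X$, and the compact pieces $D_n=\overline{U_n}\setminus U_{n-1}$ (with $U_0=\varnothing$), so that $X=\bigcup_n D_n$. By countable subadditivity of $c_{\widetilde G}^*$ it suffices to show $c_{\widetilde G}^*(E\cap D_n)=0$ for each $n$. Each $E\cap D_n$ lies in the compact space $\overline{U_n}$, on which the restricted kernel $G_n=G|_{\overline{U_n}\times\overline{U_n}}$ is again strongly regular and (after the routine check that its points of strong undulation are among those of $G$) infinite on the diagonal there; hence Theorem \ref{thm4.3} applies on $\overline{U_n}$ and asserts that the outer $G_n$-capacity and the outer energy capacity $c_{\widetilde{G}_n}^*$ vanish simultaneously on subsets of $\overline{U_n}$. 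Since the energy capacity is local, that is $c_{\widetilde G}^*=c_{\widetilde{G}_n}^*$ on subsets of $\overline{U_n}$ (as recorded in the proof of Theorem \ref{thm4.3a}), everything comes down to deducing, from $G\text{-}\!\capa^*(E\cap D_n)=0$, that the \emph{local} capacity $G_n\text{-}\!\capa^*(E\cap D_n)$ vanishes as well.

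This last deduction is the main obstacle. Imposing $G\nu\le 1$ on all of $X$ is more restrictive than imposing it only on $\overline{U_n}$, so a priori $G\text{-}\!\capa\le G_n\text{-}\!\capa$ on subsets of $\overline{U_n}$, and the vanishing of the global capacity does not formally entail that of the local one. What must be established is that the two capacities share the same null sets on each compact piece; equivalently, that a measure $\nu$ carried by a compact $K\subset U_{n-1}$ with $G_n\nu\le 1$ on $\overline{U_n}$ automatically satisfies $G\nu\le C_n$ on all of $X$ for some constant $C_n$ independent of $\nu$, since then $\nu/C_n$ competes for the global $G\text{-}\!\capa$. The bound is already available on $\overline{U_n}$; the difficulty is to control $G\nu$ on the non-compact remainder $X\setminus\overline{U_n}$.

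To attack it I would first dispose of $\cal O_\infty$, which is discrete by Theorem \ref{thm-und}, hence countable since $X$ is countable at infinity, and $c_{\widetilde G}$-null because $G=+\infty$ there; so one may assume that $K$ avoids a neighborhood of $\cal O_\infty$. Away from $\cal O_\infty$ the dilated maximum principle holds locally, and the desired global bound on $G\nu$ should then be propagated outward along the exhaustion, starting from $G\nu\le 1$ on $\overline{U_n}$ and using the finiteness and continuity of $G$ off the diagonal to dominate the contribution of the far region. This propagation of a potential bound from a compact region to all of $X$ — the same type of estimate underlying Choquet's comparisons such as \cite[Lemme 4]{Ch2} and the transfer carried out in the proof of Theorem \ref{thm4.3a} — is the delicate point. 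Once it is in place, inequality (\ref{4.0}), extended to outer capacities on $\overline{U_n}$, closes the argument.
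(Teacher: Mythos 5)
Your forward implication coincides with the paper's: the extension of (\ref{4.1'}) to outer capacities gives $G\text{-}\!\capa^*(E)\le c_{\widetilde G}^*(E)^2$, so $c_{\widetilde G}^*(E)=0$ forces $G\text{-}\!\capa^*(E)=0$. The reverse implication, however, is not proved in your proposal. You reduce it, via the exhaustion and countable subadditivity of $c_{\widetilde G}^*$, to the claim that $G\text{-}\!\capa^*(E\cap D_n)=0$ implies $G_n\text{-}\!\capa^*(E\cap D_n)=0$; you correctly observe that this is not formal (the localized capacity majorizes the global one on subsets of $\overline{U_n}$, since the constraint $G\nu\le1$ is imposed on a smaller set), and you then leave the needed deduction as a conditional sketch (``once it is in place \dots closes the argument''). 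That is a genuine gap, not an omitted routine verification. Moreover, the specific estimate you propose to fill it --- a uniform bound $G\nu\le C_n$ on all of $X$ for every $\nu$ carried by a compact $K\subset U_{n-1}$ with $G_n\nu\le1$ on $\overline{U_n}$ --- cannot be extracted from the stated hypotheses: finiteness and continuity of $G$ off the diagonal bound $G$ only on compact subsets of $X\times X$ minus the diagonal, whereas here $x$ ranges over the non-compact set $X\setminus U_n$. Nothing in strong regularity prevents $\sup\,\{G(x,y):\ x\in X\setminus U_n,\ y\in K\}=+\infty$; for instance, multiplying a nice kernel by a weight $w(x)w(y)$ with $w$ continuous and growing at infinity preserves the continuity principle and the local dilated maximum principles, yet makes every nonzero potential of a compactly supported measure unbounded at infinity. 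So the proposed outward propagation fails in general, and handling $\cal O_\infty$ more carefully does not repair it.

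The paper's own proof takes a different and much shorter route: the corollary is deduced directly from Theorem \ref{thm4.3} --- the statement that the two outer capacities are \emph{simultaneously small} on arbitrary subsets --- combined with the usual definition of inner and outer capacities \cite[p.\ 38]{Fu4}. Concretely, given $\eps>0$ choose $\eta$ as in Theorem \ref{thm4.3}; from $G\text{-}\!\capa^*(E)=0<\eta$ one gets $c_{\widetilde G}^*(E)<\eps$ for every $\eps$, hence $c_{\widetilde G}^*(E)=0$, and symmetrically in the other direction. In particular the paper never confronts the global-versus-local comparison of $G$-capacities on which your argument founders; Theorem \ref{thm4.3} is invoked as a black box relating the two capacities on the whole space. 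You are right that this one-sentence proof is silent about the non-compact case allowed by the hypotheses of Theorem \ref{thm4.3a} (Theorem \ref{thm4.3} itself assumes $X$ compact), and your proposal has in effect isolated exactly the point that a detailed exhaustion-based reduction would have to address; but as submitted, your proof establishes only one of the two implications.
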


This follows from Theorem \ref{thm4.3} in view of the usual definition of inner and outer capacities, see e.g.\   \cite[p.\ 38]{Fu4}.

\subsection{Sweeping of measures.} After this excursion to possibly non-symmetric kernels in the preceding subsection we shall from now on again only consider symmetric kernels $G$, strictly positive on the diagonal. We henceforth assume that $G$ is consistent and positive definite. For simplicity of statement we even assume that $G$ is {\textit{strictly}} positive definite, and hence altogether {\textit{perfect}} in the sense of \cite[Definition 3.3 and Theorem 3.3]{Fu1}. Under suitable additional hypotheses we shall then define {\textit{balayage}} (sweeping) of an arbitrary measure $\omega\in\cal M^+$ onto a {\textit{quasiclosed}} set $A\subset X$ such that $f:=1_AG\omega$ has $c^*(f)<+\infty$ and hence $f\in\cal H_0^*$, by Lemma \ref{lemma4.0}. That will make  Theorems \ref{thm2.1} and \ref{thm3.9} applicable. In the first place this will lead to Theorem \ref{thm4.4} about {\textit{pseudobalayage}}, or improper balayage, of $\omega$ on $A$, but in the presence of a suitable domination principle we obtain proper balayage in Theorem {\ref{thm4.5}}.

The Gauss integral associated with $f$ reads
\begin{equation}\label{Gauss}
 2\int_{A}G\omega\,d\mu-\|\mu\|^2
\end{equation}
as a function of $\mu\in\cal E^+$. For such $\mu$, $A$ is $\mu$-measurable by \cite[Corollary 6.1]{Fu4} because $\complement A$ is quasiopen and so $1_{\complement A}$ is of class $\cal G^*$ by \cite[Lemma 2.4]{Fu4}. By variation of the Gauss integral in Theorems \ref{thm2.1} and \ref{thm3.9} we only need to consider measures $\mu\in\cal E^+(A)$, that is, measures $\mu\in\cal E^+$ carried by $A$, because the Gauss integral, defined by (\ref{Gauss}), increases when $\mu$ is replaced by its restriction to $A$. This leads to (\ref{4.2}) in Theorem \ref{thm4.4} below. Similarly concerning the other variational characterization of $\omega^A$, stated in (\ref{4.3}) in Theorem \ref{thm4.4}. The Gauss integral (\ref{Gauss}) now reads
\begin{eqnarray}\label{4.1}
2\int G\omega\,d\mu-\|\mu\|^2=\|\omega\|^2-\|\omega-\mu\|^2,\quad\mu\in\cal E^+(A),
\end{eqnarray}
the latter expression applicable when $\omega\in\cal E^+$, in which case we have $G\omega\in\cal H^*_0$ by Definition \ref{def3.5}, and hence $1_AG\omega\in\cal H^*_0$ by Lemma \ref{lemma3.8}. In the following theorem we achieve the same for any measure $\omega\in\cal M^+$ for suitable $X$ and $G$ (the hypothesis that $G$ be {\textit{strictly}} positive definite being unnecessary for $1_AG\omega$ to be of class $\cal H^*_0$).

\begin{theorem}\label{thm4.4} Let $G$ be perfect, that is, consistent and strictly positive definite. For a given measure $\omega\in\cal M^+$ suppose either that $\omega\in\cal E^+$ or that $X$ is countable at infinity, that $G$ is  strongly regular, and that $G(x,x)=+\infty$ for every point $x\in\cal O_\infty$.
For any quasiclosed set $A\subset X$ such that $c^*(1_AG\omega)<+\infty$ we have $1_AG\omega\in\cal H^*_0$ and
\begin{align}\label{4.2}
[c(1_AG\omega)]^2&=\max_{\mu\in\cal E^+(A)}\,\Bigl(2\int G\omega\,d\mu-\|\mu\|^2\Bigr),\\
[c(1_AG\omega)]^2&=\max\,\Bigl\{\int G\omega\,d\mu:\ \mu\in\cal E^+(A),\ G\mu\le G\omega\text{\ $\mu$-a.e.}\Bigr\}.\label{4.3}
\end{align}
In either case\/ {\rm(\ref{4.2})} or\/ {\rm(\ref{4.3})} there is precisely one maximizing measure $\mu$, the same in both cases. This maximizing measure, denoted by $\omega^A$, is of class $\cal E^+(A)$ and is characterized within $\cal E^+(A)$ by the following properties {\rm{(a)}} and {\rm{(b)}}:
\begin{itemize}
\item[\rm(a)] $G\omega^A\ge G\omega$ {\text{ q.e.\ on\ }}$A$,
\item[\rm(b)] $G\omega^A=G\omega$ {\text{\ $\omega^A$-a.e.}};
\end{itemize}
and $\omega^A$ has the following further properties:
\begin{itemize}
\item[\rm(c)] $\int G\omega\,d\omega^A=\int G\omega^A\,d\omega=\|\omega^A\|^2=[c(1_AG\omega)]^2$,
\item[\rm(d)] $c(1_AG\omega)=\min\,\bigl\{\|\lambda\|:\ \lambda\in\cal E^+,\ G\lambda\ge G\omega\text{\ q.e.\ on\ } A\bigr\}$.
\end{itemize}
\end{theorem}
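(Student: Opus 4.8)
The plan is to reduce the entire statement to the general theory of Sections~\ref{sec2} and~\ref{sec3} applied to the single function $f:=1_AG\omega$, the only genuinely new input being that $f$ is of class $\cal H^*_0$. First I would establish exactly this. Since $A$ is quasiclosed, $1_A$ is quasi u.s.c., so it suffices to know that $G\omega$ is quasi u.s.c.; then $f=1_AG\omega$ is quasi u.s.c., and because $c^*(f)<+\infty$ by hypothesis, Lemma~\ref{lemma4.0}(a) yields $f\in\cal H^*_0$. The quasicontinuity (hence the quasi u.s.c.\ property) of $G\omega$ is precisely where the dichotomy in the hypotheses enters. If $\omega\in\cal E^+$, then $G\omega\in\cal H^*_0$ by consistency (Definition~\ref{def3.5}), so $G\omega$ is quasi u.s.c.\ (indeed quasicontinuous), cf.\ Remark~\ref{remark4.0}. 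In the remaining case, where $X$ is countable at infinity, $G$ is strongly regular, and $G(x,x)=+\infty$ on $\cal O_\infty$, the quasicontinuity of $G\omega$ with respect to $c^*_{\widetilde G}$ is furnished by Theorem~\ref{thm4.3a}; and since $G$ is symmetric we have $\widetilde G=G$ and $c^*_{\widetilde G}=c^*$, so $G\omega$ is quasicontinuous with respect to $c^*$. In either case $f\in\cal H^*_0$, as claimed.

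Once $f\in\cal H^*_0$, I would read off (\ref{4.2}) and (\ref{4.3}) from Theorem~\ref{thm2.1}. The Gauss integral $\int(2f-G\mu)\,d\mu=2\int_AG\omega\,d\mu-\|\mu\|^2$ keeps its linear term and does not increase its energy term when $\mu$ is replaced by its restriction $\mu|_A$, since $\int 1_AG\omega\,d\mu=\int G\omega\,d(\mu|_A)$ while $\|\mu|_A\|\le\|\mu\|$; hence the maximum in (\ref{2.4}) over $\cal E^+$ is already attained on $\cal E^+(A)$, which gives (\ref{4.2}). For (\ref{4.3}) I would use (\ref{2.5}) together with the fact, from Theorem~\ref{thm2.1}, that every maximizing measure is carried by $\{f>0\}\subset A$; for such a measure $\mu\in\cal E^+(A)$ the objective $\int f\,d\mu$ equals $\int G\omega\,d\mu$ and the constraint $G\mu\le f$ $\mu$-a.e.\ becomes $G\mu\le G\omega$ $\mu$-a.e. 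Strict positive definiteness and Theorem~\ref{thm3.9}(d) then collapse $\mathrm M(f)$ to a single measure $\omega^A$, which is simultaneously the unique maximizer in (\ref{4.2}) and (\ref{4.3}), the two maximizing classes coinciding by Theorem~\ref{thm2.1}.

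The properties (a)--(d) follow by transcribing the corresponding assertions of Theorems~\ref{thm2.1} and~\ref{thm3.9} through the identities $f=G\omega$ on $A$ and $\{f>0\}\subset A$. Property (a) of Theorem~\ref{thm2.1}, namely $G\omega^A\ge f$ q.e., is vacuous off $A$ and reads $G\omega^A\ge G\omega$ q.e.\ on $A$, which is the present (a); likewise property (b) there, $G\omega^A=f$ $\omega^A$-a.e., becomes the present (b) since $\omega^A$ is carried by $A$. That (a) and (b) characterize $\omega^A$ within $\cal E^+(A)$ is Theorem~\ref{thm3.9}. For (c) I would combine property (c) of Theorem~\ref{thm2.1}, $\int f\,d\omega^A=\|\omega^A\|^2=c(f)^2$, with $\int f\,d\omega^A=\int G\omega\,d\omega^A$ and the symmetry of mutual energy, $\int G\omega\,d\omega^A=\int G\omega^A\,d\omega$. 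Finally (d) is the dual-capacity statement: by Corollary~\ref{cor3.13} and Lemma~\ref{lemma3.2}, $c(f)=\gamma^*(f)=\min\{\|\lambda\|:\lambda\in\Gamma^*(f)\}$, the minimum being attained by Theorem~\ref{thm3.7}; and $\Gamma^*(f)=\{\lambda\in\cal E^+:G\lambda\ge f\text{ q.e.}\}=\{\lambda\in\cal E^+:G\lambda\ge G\omega\text{ q.e.\ on }A\}$, which is precisely (d).

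I expect the main obstacle to be the first step, establishing $1_AG\omega\in\cal H^*_0$ in the infinite-energy case: this rests on the quasicontinuity of $G\omega$ supplied by Theorem~\ref{thm4.3a}, which in turn depends on the comparison between $G$-capacity and energy capacity developed in Subsection~\ref{relations} (and ultimately on Choquet's results). Everything after that is essentially bookkeeping, passing between the function $1_AG\omega$ and the potential $G\omega$ for measures carried by $A$, and quoting the already-established general theorems.
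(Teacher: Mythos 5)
Your proposal is correct and follows essentially the same route as the paper: establish $1_AG\omega\in\cal H^*_0$ via Remark~\ref{remark4.0} in the finite-energy case and via Theorem~\ref{thm4.3a} with Lemma~\ref{lemma4.0}(a) otherwise, reduce the variation to $\cal E^+(A)$ by restricting $\mu$ to $A$, then quote Theorems~\ref{thm2.1} and~\ref{thm3.9} for (a)--(c) and the uniqueness, and obtain (d) from $c=\gamma$ (Lemma~\ref{lemma3.5}(i), with attainment from Theorem~\ref{thm3.7}). The only detail you pass over silently is that the restriction $\mu|_A$ is well defined because $A$ is $\mu$-measurable for every $\mu\in\cal E^+$ (the paper notes this via the quasiopenness of $\complement A$ and \cite[Corollary 6.1]{Fu4}), a minor bookkeeping point rather than a gap.
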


\begin{proof} We apply Theorems \ref{thm2.1} and \ref{thm3.9} to $f:=1_AG\omega$. By  Remark \ref{remark4.0}, resp.\ Theorem~\ref{thm4.3a} and Lemma \ref{lemma4.0}~(a), we have $f\in\cal H^*_0$. Thus (\ref{4.2}) (hence also (\ref{4.3})) follows right away from (\ref{2.4}) and (\ref{2.5}). The (common) maximizing measure $\omega^A\in{\rm{M}}(f)$ is uniquely determined in view of the strict positive definiteness of $G$ because ${\rm{M}}(f)$ is a subset of the equivalence class $\Lambda(f)$ (Theorem \ref{thm3.9} (d)).  The properties (a), (b), (c),
including the characterization of $\omega^A$ within $\cal E^+(A)$ by (a) and (b), likewise follow immediately from corresponding assertions in Theorems \ref{thm2.1} and \ref{thm3.9}. Finally, property (d) follows from the equality $c(1_AG\omega)=\gamma(1_AG\omega)$ (see Lemma \ref{lemma3.5}~(i)).
\end{proof}

The unique measure $\omega^A$ in Theorem \ref{thm4.4} is said to arise by {\textit{pseudobalayage}} or improper balayage of $\omega$ on $A$. If $\omega\in\cal E^+$ then $\omega^A$ is the nearest-point projection of $\omega$ on $\cal E^+(A)$ according to (\ref{4.1}).

The most important case of pseudobalayage is of course proper {\textit{balayage}} or {\textit{sweeping}}. In order to achieve sweeping of a given measure $\omega$ on quasiclosed sets $A$ such that $c^*(1_AG\omega)<+\infty$ we assume that the perfect kernel $G$ satisfies the following $\omega$-domination principle:

\begin{definition}\label{def4.5} For a given measure $\omega\in\cal M^+$ the kernel $G$ satisfies the {\textit{$\omega$-domination principle}} if for any $\mu\in\cal E^+$ such that $G\mu\le G\omega$ $\mu$-a.e.\ we have $G\mu\le G\omega$ everywhere on $X$. If this holds for every measure $\omega$ of finite energy we say that $G$ satisfies the {\textit{domination principle}}.
\end{definition}

\begin{theorem}\label{thm4.5}
Let $G$ be perfect. For a given measure $\omega\in\cal M^+$ suppose that $G$ satisfies the $\omega$-domination principle. Also suppose that either $\omega\in\cal E^+$ or that $X$ is countable at infinity, that $G$ is strongly regular, and that $G(x,x)=+\infty$ for every point $x\in\cal O_\infty$. For any quasiclosed set $A\subset X$ such that $c^*(1_AG\omega)<+\infty$ we then have $1_AG\omega\in\cal H^*_0$,
and {\rm(\ref{4.2})} and {\rm(\ref{4.3})} from Theorem {\rm\ref{thm4.4}} hold, both with precisely one maximizing measure $\mu$, the same in both cases. This maximizing measure is termed the sweeping of $\omega$ on $A$ and denoted by $\omega^A$. It is characterized within $\cal E^+(A)$ by the property
\begin{itemize}
\item[\rm(a$'$)] $G\omega^A=G\omega\text{\ q.e.\ on\ }A$,
\end{itemize}
and has the following further properties:
\begin{itemize}
\item[\rm(b$'$)] $G\omega^A\le G\omega\text{\ everywhere on\ }X$,
\item[\rm(c)] $\int G\omega\,d\omega^A=\int G\omega^A\,d\omega=\|\omega^A\|^2=[c(1_AG\omega)]^2$,
\item[\rm(d)] $c(1_AG\omega)=\min\,\bigl\{\|\lambda\|:\ \lambda\in\cal E^+,\ G\lambda\ge G\omega\text{\ q.e.\ on\ }A\bigr\}$.
\end{itemize}
\end{theorem}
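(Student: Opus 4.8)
The plan is to deduce Theorem~\ref{thm4.5} from Theorem~\ref{thm4.4} by using the $\omega$-domination principle to upgrade the conclusions (a) and (b) of that theorem to the sharper statements (a$'$) and (b$'$). First I would invoke Theorem~\ref{thm4.4} itself: the hypotheses here are exactly those of Theorem~\ref{thm4.4} together with the extra assumption that $G$ satisfy the $\omega$-domination principle. Hence $1_AG\omega\in\cal H^*_0$, the variational identities (\ref{4.2}) and (\ref{4.3}) hold with a unique common maximizer $\omega^A\in\cal E^+(A)$, and $\omega^A$ enjoys the properties (a) $G\omega^A\ge G\omega$ q.e.\ on $A$, (b) $G\omega^A=G\omega$ $\omega^A$-a.e., (c), and (d). In particular (c) and (d) are already the assertions (c) and (d) of the present theorem, so no further work is needed for them.

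The single substantive new step is to pass from (b) to (b$'$). From (b) we have $G\omega^A\le G\omega$ $\omega^A$-a.e., and $\omega^A\in\cal E^+$, so the $\omega$-domination principle (Definition~\ref{def4.5}), applied with $\mu=\omega^A$, immediately gives $G\omega^A\le G\omega$ everywhere on $X$; this is (b$'$). Combining (b$'$) with the reverse inequality (a), valid q.e.\ on $A$, yields $G\omega^A=G\omega$ q.e.\ on $A$, which is (a$'$).

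It remains to check that (a$'$) characterizes $\omega^A$ within $\cal E^+(A)$. I would show that, for any $\mu\in\cal E^+(A)$, property (a$'$) already forces both (a) and (b) of Theorem~\ref{thm4.4}, whence $\mu=\omega^A$ by the characterization established there. The inequality (a) is immediate from the equality (a$'$). For (b), note that the set $\{x\in A:G\mu(x)\ne G\omega(x)\}$ has zero outer capacity by (a$'$), and a measure of finite energy does not charge sets of zero outer capacity; since $\mu$ is carried by $A$, this gives $G\mu=G\omega$ $\mu$-a.e., i.e.\ (b). Thus (a$'$) together with membership in $\cal E^+(A)$ singles out $\omega^A$ uniquely. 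The one place demanding care is precisely this last passage from a quasi-everywhere identity on $A$ to a $\mu$-almost-everywhere identity, which hinges on finite-energy measures ignoring sets of zero outer capacity; apart from this, the whole proof is a transcription of Theorem~\ref{thm4.4} sharpened by a single appeal to the domination principle.
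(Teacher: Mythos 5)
Your proposal is correct and follows essentially the same route as the paper: invoke Theorem~\ref{thm4.4}, apply the $\omega$-domination principle to (b) to obtain (b$'$) and hence (a$'$), and conversely deduce (a) and (b) from (a$'$) for any $\mu\in\cal E^+(A)$, using that measures of finite energy do not charge sets of zero outer capacity (the paper's appeal to (\ref{2.2}) and (\ref{2.3})). Your write-up merely makes explicit the step the paper compresses into a citation, so nothing is missing.
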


\begin{proof}
In fact, (a) and (b) from Theorem \ref{thm4.4} imply together (a$'$) and (b$'$) after applying the $\omega$-domination principle to (b). Conversely, (a$'$) implies both (a) (trivially) and (b) (in view of  (\ref{2.2}), (\ref{2.3}), and because $\omega^A\in\cal E^+(A)$). The remaining assertions follow from Theorem \ref{thm4.4}.
\end{proof}

\subsection{Capacitary measure and equilibrium measure on a quasiclosed set.} It is easy to adapt the proofs of Theorem \ref{thm4.4}, resp.\ \ref{thm4.5}, so as to establish corresponding results (Theorem \ref{thm4.6}, resp.\ Remark \ref{remark4.7}) on the capacitary measure, resp.\ the {\textit{equilibrium measure}}, on a quasiclosed set $A\subset X$ of finite outer capacity $c^*(A)$. We simply replace $G\omega$ by $1$ and $\omega^A$ by the capacitary measure, resp.\ the equilibrum measure on $A$, both denoted by $\mu_A$, and therefore replace $c^*(1_AG\omega$) by $c^*(A)$. The use of Lemma \ref{lemma4.0} in the proof of Theorem \ref{thm4.4} is now replaced by the fact that every quasiclosed set $A\subset X$ with $c^*(A)<+\infty$ is quasicompact according to Lemma \ref{lemma3.8} (b). Furthermore, in order to establish Remark \ref{remark4.7}, the $\omega$-domination principle in Theorem \ref{thm4.5}
is now replaced by the (Frostman) maximum principle. Countability of $X$ at infinity and strong regularity for $G$ (serving to establish quasicontinuity of $G\omega$ in case $\|\omega\|=+\infty$) are not needed here because the function $1$ is even continuous.

\begin{theorem}\label{thm4.6} Let
$G$ be perfect.
For any quasiclosed set $A\subset X$ with $c^*(A)<+\infty$ we have $1_A\in\cal H^*_0$ and
\begin{align}\label{4.6}
c(A)^2&=\max_{\mu\in\cal E^+(A)}\,\int(2-G\mu)\,d\mu=\max_{\mu\in\cal E^+(A)}\,(2\mu(X)-\|\mu\|^2),\\
c(A)^2&=\max\,\bigl\{\mu(X):\ \mu\in\cal E^+(A),\ G\mu\le 1\text{\ $\mu$-a.e.}\bigr\}.\label{4.7}
\end{align}
In either case\/ {\rm(\ref{4.6})} or\/ {\rm(\ref{4.7})} there is precisely one maximizing measure $\mu$, the same in both cases. This maximizing measure, termed the capacitary measure on $A$ and denoted by $\mu_A$, is carried by $A$ and is characterized within $\cal E^+(A)$ by the following properties {\rm(a)} and {\rm(b)}:
\begin{itemize}
\item[\rm(a)] $G\mu_A\ge1$ q.e.\ on $A$,
\item[\rm(b)] $G\mu_A=1$ $\mu_A$-a.e.,
\end{itemize}
and $\mu_A$ has the following further properties:
\begin{itemize}
\item[\rm(c)] $\mu_A(A)=\|\mu_A\|^2=c(A)^2$,
\item[\rm(d)] $c(A)=\min\,\bigl\{\|\lambda\|:\ \lambda\in\cal E^+,\ G\lambda\ge 1\text{\ q.e.\ on\ }A\bigr\}$.
\end{itemize}
\end{theorem}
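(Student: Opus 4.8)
The plan is to apply Theorems \ref{thm2.1} and \ref{thm3.9} to the function $f:=1_A$, following the proof of Theorem \ref{thm4.4} with $G\omega$ replaced by the constant $1$. First I would establish $f=1_A\in\cal H_0^*$. Since $A$ is quasiclosed with $c^*(A)<+\infty$, Lemma \ref{lemma3.8} (b) gives that $A$ is quasicompact, which is equivalent to $1_A\in\cal H_0^*$ (the sets $A$ with $1_A\in\cal H_0^*$ being precisely the quasicompact ones). This is the only point where the argument departs from that of Theorem \ref{thm4.4}: no quasicontinuity hypothesis is needed here, since $f$ is built from the continuous function $1$ rather than from a potential $G\omega$, so the material of Subsection \ref{relations} (strong regularity, countability at infinity) plays no role.

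With $f=1_A\in\cal H_0^*$ secured, equations (\ref{4.6}) and (\ref{4.7}) follow from (\ref{2.4}) and (\ref{2.5}). The Gauss integral for $f=1_A$ is $2\int 1_A\,d\nu-\|\nu\|^2$ over $\nu\in\cal E^+$; I would reduce the maximization to $\mu\in\cal E^+(A)$ by the reduction already used before Theorem \ref{thm4.4}: replacing $\mu$ by its restriction $\mu|_A$ leaves $\int 1_A\,d\mu$ unchanged while not increasing $\|\mu\|$ (as $G\ge0$ forces $\|\mu|_A\|\le\|\mu\|$), so the Gauss integral does not decrease. For $\mu\in\cal E^+(A)$ one has $\int 1_A\,d\mu=\mu(X)$, yielding (\ref{4.6}); and the constraint $G\nu\le 1_A$ $\nu$-a.e.\ in (\ref{2.5}) becomes $G\mu\le1$ $\mu$-a.e.\ once $\mu$ is carried by $A$, yielding (\ref{4.7}).

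Uniqueness of the common maximizing measure $\mu_A$ is immediate from Theorem \ref{thm3.9} (d): strict positive definiteness collapses $\mathrm M(1_A)$ to a single measure, which is carried by $\{f>0\}=A$ and hence lies in $\cal E^+(A)$. Properties (a), (b), (c) are the translations of Theorem \ref{thm2.1} (a), (b), (c): $G\mu_A\ge 1_A$ q.e.\ reads as $G\mu_A\ge1$ q.e.\ on $A$; $G\mu_A=1_A$ $\mu_A$-a.e.\ reads as $G\mu_A=1$ $\mu_A$-a.e.\ because $\mu_A$ lives on $A$; and $\int 1_A\,d\mu_A=\|\mu_A\|^2=c(1_A)^2$ reads as $\mu_A(A)=\|\mu_A\|^2=c(A)^2$. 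The characterization within $\cal E^+(A)$ by (a) and (b) follows from Theorem \ref{thm3.9}, once one observes that for $\mu\in\cal E^+(A)$ the conditions $G\mu\ge1$ q.e.\ on $A$ and $G\mu\ge 1_A$ q.e.\ coincide (the latter being automatic off $A$). Finally (d) follows from $c(1_A)=\gamma(1_A)$ (Lemma \ref{lemma3.5} (i)) together with the attainment $\gamma^*(1_A)=\min\{\|\lambda\|:\lambda\in\Gamma^*(1_A)\}$ (Theorem \ref{thm3.7}), since $\Gamma^*(1_A)=\{\lambda\in\cal E^+:G\lambda\ge1\text{ q.e.\ on }A\}$.

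I expect no single hard estimate to arise --- the statement is essentially a specialization of Theorem \ref{thm4.4}. The main care goes into the bookkeeping that converts the intrinsic conditions on $f=1_A$ (holding q.e.\ or $\mu$-a.e.\ on all of $X$) into the ``on $A$'' conditions (a)--(d) stated here, and into checking that maximizing over $\cal E^+(A)$ rather than over all of $\cal E^+$ returns the same value. Both points become routine once the implication ``$\mu$ carried by $A$'' $\Rightarrow$ ``$1_A=1$ $\mu$-a.e.'' is applied consistently.
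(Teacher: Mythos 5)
Your proposal is correct and coincides with the paper's own argument: the paper proves Theorem \ref{thm4.6} precisely by adapting the proof of Theorem \ref{thm4.4} with $G\omega$ replaced by the continuous function $1$, replacing the use of Lemma \ref{lemma4.0} by Lemma \ref{lemma3.8}\,(b) to get $1_A\in\cal H^*_0$ (quasicompactness of $A$), and noting that countability at infinity and strong regularity are therefore not needed. Your additional bookkeeping --- the reduction of the Gauss variation to $\cal E^+(A)$ via restriction to $A$, the translation of the q.e.\ and $\mu$-a.e.\ conditions, and the derivation of (d) from Lemma \ref{lemma3.5}\,(i) and Theorem \ref{thm3.7} --- is exactly what the paper's proof of Theorem \ref{thm4.4} does for $f=1_AG\omega$, specialized here to $f=1_A$.
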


\begin{remark}\label{remark4.7} If in addition $G$ satisfies the {\textit{maximum principle}} then we get the actual equilibrium measure (rather than the capacitary measure). Now (a) of the above theorem becomes (a$'$) $G\mu_A=1$ q.e.\ on $A$,
whereas (b) becomes
(b$'$) $G\mu_A\le1$ everywhere on $X$.
Furthermore, (a$'$) alone now characterizes $\mu_A$ within $\cal E^+(A)$.
\end{remark}

\subsection{Outer balayage on an arbitrary set.}\label{OuterBal} We apply Theorem \ref{thm4.5}
to $f=1_AG\omega$, where $\omega\in\cal M^+$ is a given measure and $A$ now is an arbitrary subset of $X$ with $c^*(1_AG\omega)<+\infty$ (instead of a quasiclosed set with that property). Assuming that $X$ has a countable base and that $G$ is perfect, there is a $c^*$-equivalence class of {\textit{quasiclosures}} $A^*$ of $A$, that is, quasiclosed sets quasicontaining $A$ and quasi minimal with these two properties. Explicitly, let us say that a set $B$ {\textit{quasicontains}} a set $A$ if $c^*(A\setminus B)=0$. Then a quasiclosure of $A$ is defined as a quasiclosed set $A^*$ quasicontaining $A$ and such that every quasiclosed set $B$ which quasicontains $A$ also quasicontains $A^*$. Equivalently, $1_{A^*}$ shall equal $(1_A)^*$, a quasi u.s.c.\ envelope of $1_A$ (see the paragraph preceding Definition \ref{def3.16}). Directly, a quasiclosure $A^*$ of $A$ exists according to \cite[Theorem 2.7]{Fu3} applied to the outer capacity $C$ on (subsets of) $X$ defined by $C(E)=c^*(1_EG\omega)$, $E\subset X$, noting that $C$ is sequentially order continuous from below (on arbitrary sets) because $c^*$ is sequentially order continuous from below on $\cal F^+$ (Corollary \ref{cor3.13}). The $c^*$-equivalence class of all quasiclosures of $A$ obviously depends only on the $c^*$-equivalence class of $A$. From Theorem \ref{thm4.5} we have in view of Lemma \ref{lemma3.8} (cf.\ Theorem \ref{thm4.3a})  the following result on outer balayage:

\begin{theorem}\label{cor4.8} Suppose that $X$ has a countable base and that $G$ is perfect. Consider a measure $\omega\in\cal M^+$ such that $G$ satisfies the $\omega$-domination principle, a set $A\subset X$ with $c^*(1_AG\omega)<+\infty$, and a quasiclosure $A^*$ of $A$. Suppose moreover that either $\omega\in\cal E^+$ or that $G$ is strongly regular and that $G(x,x)=+\infty$ for every $x\in\cal O_\infty$. Then $c^*(1_AG\omega)=c(1_{A^*}G\omega)$. The sweeping $\omega^{A^*}$ of $\omega$ on $A^*$ is also called the {\textit{outer sweeping}} of $\omega$ on $A$ and denoted by $\omega^{*A}$. It is carried by $A^*$ and is characterized within $\cal E^+(A^*)$ by the following property:
\begin{itemize}
\item[\rm(a$^*$)] $G\omega^{*A}=G\omega$ q.e.\ on $A^*$ (hence q.e.\ on $A$);
\end{itemize}
and $\omega^{*A}$ has the following further properties:
\begin{itemize}
\item[\rm(b$^*$)] $G\omega^{*A}\le G\omega$ everywhere on $X$,
\item[\rm(c$^*$)] $\int G\omega\,d\omega^{*A}=\int G\omega^{*A}\,d\omega=\|\omega^{*A}\|^2=[c^*(1_AG\omega)]^2$,
\item[\rm(d$^*$)] $c^*(1_AG\omega)=\min\,\bigl\{\|\lambda\|:\ \lambda\in\cal E^+,\ G\lambda\ge G\omega\text{\ q.e.\ on\ }A\bigr\}$.
\end{itemize}
\end{theorem}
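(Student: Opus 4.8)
The plan is to reduce everything to Theorem~\ref{thm4.5} applied to the quasiclosed set $A^*$, once it is known that $c^*(1_AG\omega)=c(1_{A^*}G\omega)<+\infty$ and hence $1_{A^*}G\omega\in\cal H^*_0$. The whole weight of the argument therefore rests on identifying $1_{A^*}G\omega$ as a quasi u.s.c.\ envelope of $1_AG\omega$ and invoking Theorem~\ref{thm3.17}, after which the listed properties translate mechanically.

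First I would record that under either set of hypotheses $G\omega$ is quasicontinuous and finite q.e.\ (Remark~\ref{remark4.0} when $\omega\in\cal E^+$; Theorem~\ref{thm4.3a} otherwise). Since $A^*$ is quasiclosed, $1_{A^*}$ is quasi u.s.c., so for small $\eps$ there is an open set on whose complement both $1_{A^*}$ is u.s.c.\ and $G\omega$ is continuous and finite; their product is then u.s.c.\ there, whence $1_{A^*}G\omega$ is quasi u.s.c. Because $A^*$ quasicontains $A$ we have $1_A\le 1_{A^*}$ q.e.\ and hence $1_AG\omega\le 1_{A^*}G\omega$ q.e.

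The main step, and the delicate one, is quasiminimality: for every quasi u.s.c.\ $\phi\in\cal F^+$ with $\phi\ge 1_AG\omega$ q.e.\ I must show $\phi\ge 1_{A^*}G\omega$ q.e. Discarding a $c^*$-null set on which $\phi$ or $G\omega$ is infinite, the function $\phi-G\omega$ is quasi u.s.c.\ (a quasi u.s.c.\ function plus the quasicontinuous function $-G\omega$), so by \cite[Lemma~3.3]{Fu3} the set $B:=\{\phi\ge G\omega\}$ is quasiclosed. On $A$ one has $\phi\ge 1_AG\omega=G\omega$ q.e., so $A\subset B$ q.e.; the minimality of the quasiclosure then forces $A^*\subset B$ q.e., and therefore $\phi\ge G\omega\ge 1_{A^*}G\omega$ q.e.\ on $A^*$ (trivially off $A^*$). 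Thus $1_{A^*}G\omega$ is a quasi u.s.c.\ envelope of $1_AG\omega$, and Theorem~\ref{thm3.17} gives $c^*(1_AG\omega)=c(1_{A^*}G\omega)$, which is finite by hypothesis; being quasi u.s.c.\ of finite upper capacity, $1_{A^*}G\omega\in\cal H^*_0$ by Lemma~\ref{lemma3.8}(a). I expect the correct handling of the sets where $G\omega\in\{0,+\infty\}$ and the verification that $\{\phi\ge G\omega\}$ is quasiclosed to be the only points requiring real care.

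With this in hand, all hypotheses of Theorem~\ref{thm4.5} hold for $A^*$, yielding the unique sweeping $\omega^{A^*}=:\omega^{*A}\in\cal E^+(A^*)$, characterized within $\cal E^+(A^*)$ by $G\omega^{A^*}=G\omega$ q.e.\ on $A^*$; this is (a$^*$), and since $A\subset A^*$ q.e.\ it holds q.e.\ on $A$ as well. Property (b$^*$) is (b$'$) of Theorem~\ref{thm4.5}, and (c$^*$) is (c) there combined with $c(1_{A^*}G\omega)=c^*(1_AG\omega)$. Finally (d$^*$) needs no appeal to $A^*$: by Corollary~\ref{cor3.13} and Theorem~\ref{thm3.7}, $c^*(1_AG\omega)=\gamma^*(1_AG\omega)=\min\{\|\lambda\|:\lambda\in\Gamma^*(1_AG\omega)\}$, and since $1_AG\omega$ vanishes off $A$ the set $\Gamma^*(1_AG\omega)$ equals $\{\lambda\in\cal E^+:G\lambda\ge G\omega\text{ q.e.\ on }A\}$, which is precisely the constraint in (d$^*$).
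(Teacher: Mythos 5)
Your overall architecture is the same as the paper's: pass to the quasiclosure $A^*$, establish $c^*(1_AG\omega)=c(1_{A^*}G\omega)<+\infty$ so that $1_{A^*}G\omega\in\cal H^*_0$, and then read (a$^*$), (b$^*$), (c$^*$) off Theorem~\ref{thm4.5} applied to $A^*$ (note, as you implicitly do, that a locally compact space with countable base is countable at infinity, so the second hypothesis set of Theorem~\ref{thm4.5} is available). Where you differ is in how the capacity identity is obtained, and here your version is actually more explicit than the paper's: the paper gets it by invoking \cite[Theorem 2.7]{Fu3} for the auxiliary outer capacity $C(E)=c^*(1_EG\omega)$, whereas you prove directly that $1_{A^*}G\omega$ is a quasi u.s.c.\ envelope of $1_AG\omega$ (for the $c^*$-quasiclosure $A^*$) and then quote Theorem~\ref{thm3.17}; this is a legitimate and self-contained substitute, resting on exactly the minimality clause in the paper's definition of quasiclosure. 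Your treatment of (d$^*$) is also a genuine shortcut: identifying the constraint set with $\Gamma^*(1_AG\omega)$ (correct, since $1_AG\omega$ vanishes off $A$) and using $c^*=\gamma^*$ (Corollary~\ref{cor3.13}) together with attainment of the minimum (Theorem~\ref{thm3.7}) avoids the paper's route, which derives (d) of Theorem~\ref{thm4.5} for $A^*$ and then proves that the constraint sets for $A$ and for $A^*$ coincide via the quasiclosedness of $\{G\lambda\ge G\omega\}$.

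One justification in your key step is, however, invalid as written. In proving that $B:=\{\phi\ge G\omega\}$ is quasiclosed you propose to ``discard a $c^*$-null set on which $\phi$ or $G\omega$ is infinite'' so as to form $\phi-G\omega$. But when $\omega\notin\cal E^+$ the set $\{G\omega=+\infty\}$ need not be $c^*$-null (the hypothesis $c^*(1_AG\omega)<+\infty$ controls $G\omega$ only on $A$, and Theorem~\ref{thm4.3a} yields quasicontinuity of $G\omega$ as a map into $[0,+\infty]$, not q.e.\ finiteness); and $\{\phi=+\infty\}$ can certainly carry positive capacity, since e.g.\ $\phi\equiv+\infty$ belongs to $\Phi^*(1_AG\omega)$. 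The repair is immediate and requires no subtraction and not even quasicontinuity of $G\omega$ at this point: if $\phi$ is u.s.c.\ on $\complement V$ with $c(V)<\eps$, then for any $x_n\to x$ in $B\cap\complement V$ one has, in $[0,+\infty]$, $\phi(x)\ge\limsup_n\phi(x_n)\ge\limsup_n G\omega(x_n)\ge G\omega(x)$ by lower semicontinuity of $G\omega$, so $B\cap\complement V$ is closed in $\complement V$ and $B$ is quasiclosed --- precisely the mechanism the paper itself uses in its remark on (d$^*$). The same caveat applies to the word ``finite'' in your product step: upper semicontinuity of $1_{A^*}G\omega$ on $\complement V$ holds with $G\omega$ merely continuous in the extended sense there, using the convention $0\cdot(+\infty)=0$ and checking points off $A^*$ separately. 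With these one-line repairs your proof is complete.
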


Concerning \rm(d$^*$), since $A^*$ quasicontains $A$ we have
$$
\bigl\{\lambda\in\cal E^+:\ G\lambda\ge G\omega\text{\ q.e.\ on\ }A^*\bigr\}\subset
\bigl\{\lambda\in\cal E^+:\ G\lambda\ge G\omega\text{\
q.e.\ on\ }A\bigr\}.
$$
Here equality prevails. In fact, being of class $\cal H^*_0$ by Definition \ref{def3.5}, $G\lambda$ is quasi u.s.c., and because $G\omega$ is l.s.c., the set $\{G\lambda\ge G\omega\}$ is quasiclosed; and since it quasicontains $A$ it also quasicontains $A^*$ by definition of $A^*$.

\begin{cor}\label{cor4.9}  Under the hypotheses on $X$, $G$, $\omega$, and $A$ in Theorem {\rm\ref{cor4.8}}, suppose in addition that $\omega\in\cal E^+(A^*)$. Then
\begin{itemize}
\item[\rm(a)] $\omega^{*A}=\omega$,
\item[\rm(b)] $(\omega^{*A}){}^{*B}=\omega^{*A}=(\omega^{*B}){}^{*A}$ for any set $B$ quasicontaining $A$.
\end{itemize}
\end{cor}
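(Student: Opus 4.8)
The plan is to reduce both assertions to the uniqueness built into the characterization (a$^*$) of the outer sweeping in Theorem~\ref{cor4.8}, supplemented by the observation that a finite-energy measure carried by $A^*$ is automatically carried by $B^*$ whenever $B$ quasicontains $A$.

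First I would establish (a). All hypotheses of Theorem~\ref{cor4.8} are available; in particular $c^*(1_AG\omega)\le c^*(G\omega)=c(G\omega)=\|\omega\|<+\infty$, since $\omega\in\cal E^+$ forces $G\omega\in\cal H^*_0$ with $c(G\omega)=\|\omega\|$ (Lemma~\ref{lemma3.3} and Remark~\ref{remark3.8a}). By hypothesis $\omega\in\cal E^+(A^*)$, and $G\omega=G\omega$ q.e.\ on $A^*$ trivially, so $\omega$ itself satisfies the property (a$^*$) that characterizes $\omega^{*A}=\omega^{A^*}$ within $\cal E^+(A^*)$. The uniqueness asserted in Theorem~\ref{cor4.8} then yields $\omega^{*A}=\omega$.

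Next I would verify the inclusion $\cal E^+(A^*)\subset\cal E^+(B^*)$ whenever $B$ quasicontains $A$. Since $B^*$ is quasiclosed and quasicontains $B$, hence $A$, minimality of the quasiclosure $A^*$ gives $c^*(A^*\setminus B^*)=0$; by (\ref{2.3}) a finite-energy measure does not charge this set, so $A^*\setminus B^*$ is $\omega$-negligible. As $\complement B^*\subset(A^*\setminus B^*)\cup\complement A^*$ and $\complement A^*$ is locally $\omega$-negligible, $\complement B^*$ is locally $\omega$-negligible, i.e.\ $\omega\in\cal E^+(B^*)$. In particular $\omega^{*A}=\omega\in\cal E^+(B^*)$.

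Finally, (b) follows by applying (a) twice. Part (a) with $B$ in place of $A$ applies because $\omega\in\cal E^+(B^*)$ and the requisite hypotheses (the $\omega$-domination principle and $c^*(1_BG\omega)\le\|\omega\|<+\infty$) persist, yielding $\omega^{*B}=\omega$. Combining with $\omega^{*A}=\omega$ gives $(\omega^{*A})^{*B}=\omega^{*B}=\omega=\omega^{*A}$ and likewise $(\omega^{*B})^{*A}=\omega^{*A}=\omega=\omega^{*A}$, each iterated sweeping being the sweeping of the finite-energy measure $\omega$ carried by the relevant quasiclosure, so that (a) applies at each stage. The only point demanding care is that the domination and finiteness hypotheses needed to define and characterize the iterated sweepings are inherited; but this is immediate, since every intermediate measure equals $\omega$, so no genuine obstacle arises beyond this bookkeeping.
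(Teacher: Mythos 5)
Your proof is correct and follows essentially the same route as the paper: both rest on the uniqueness characterization (a$^*$) of Theorem~\ref{cor4.8} together with the inclusion $\cal E^+(A^*)\subset\cal E^+(B^*)$ (via minimality of $A^*$ and the fact that finite-energy measures do not charge sets of zero outer capacity). Your only deviation is cosmetic --- you first deduce $\omega^{*B}=\omega$ and let everything collapse to $\omega$, whereas the paper verifies the characterizing property for $(\omega^{*B})^{*A}$ directly --- and your explicit check that $c^*(1_BG\omega)\le c(G\omega)=\|\omega\|<+\infty$ is a welcome piece of bookkeeping the paper leaves implicit.
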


Here (a) follows from the unique characterization of $\omega^{*A}$ by the property (a$^*$) in Theorem \ref{cor4.8}, obviously possessed by $\omega$ itself. The former equality (b) holds by (a) applied with $\omega$ and $A$ replaced by $\omega^{*A}$ and $B$, respectively, noting that $\omega^{*A}\in\cal E^+(A^*)\subset\cal E^+(B^*)$ since $A^*$ is quasicontained in $B^*$ and since $\omega^{*A}$ does not charge the sets of zero upper capacity. The latter equality (b) holds
by the characterization of $\omega^{*A}$ within $\cal E^+(A^*)$ by the same property (a$^*$), but now with $\omega$ replaced by $\omega^{*B}$, noting that $G\bigl((\omega^{*B}){}^{*A}\bigr)=G(\omega^{*B})=G\omega=G(\omega^{*A})$ q.e.\ on $A^*$ (the second equality valid even q.e.\ on $B^*\supset A^*$), and so indeed $(\omega^{*B}){}^{*A}=\omega^{*A}$.

\subsection{Outer equilibrium on an arbitrary set.} Replacing $G\omega$ in Theorem \ref{cor4.8} by the constant function $1$ we obtain a similar result for outer equilibrium, whereby the $\omega$-domination principle shall be replaced by the maximum principle, and there is no need for strong regularity of $G$ here, the function $1$ being continuous.

\begin{cor}\label{cor4.11} Suppose that $X$ has a countable base and that $G$ is perfect and satisfies the maximum principle. Consider a set $A\subset X$ with $c^*(A)<+\infty$ and a quasiclosure $A^*$ of $A$. Then $c^*(A)=c(A^*)$. The equilibrium measure $\mu_{A^*}$ on $A^*$ is also called the {\textit{outer equilibrium measure}} on $A$ and denoted by $\mu_A^*$.
It is carried by $A^*$ and is characterized within $\cal E^+(A^*)$ by the following property:
\begin{itemize}
\item[\rm(a$^*$)] $G\mu_A^*=1$ q.e.\ on $A^*$ (hence q.e.\ on $A$).
\end{itemize}
Furthermore, $\mu_A^*$ and $c^*(A)$ have the following properties:
\begin{itemize}
\item[\rm(b$^*$)] $G\mu_A^*\le1$ everywhere on $X$,
\item[\rm(c$^*$)] $\mu_A^*(X)=\|\mu_A^*\|^2=c^*(A)^2$,
\item[\rm(d$^*$)] $c^*(A)=\min\,\bigl\{\|\lambda\|:\ \lambda\in\cal E^+,\ G\lambda\ge1\text{\ q.e.\ on\ }A\bigr\}$.
\end{itemize}
\end{cor}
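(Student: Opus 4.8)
The plan is to follow the proof of Theorem~\ref{cor4.8} essentially verbatim, replacing the potential $G\omega$ throughout by the constant function $1$ and the $\omega$-domination principle by the (Frostman) maximum principle. Since $1$ is continuous there is no need to invoke quasicontinuity of a potential, so the hypotheses of strong regularity and $G(x,x)=+\infty$ on $\cal O_\infty$ are dispensed with. First I would pass to a quasiclosure $A^*$ of $A$, which exists and is determined up to $c^*$-equivalence because $X$ has a countable base and $c^*$ is sequentially order continuous from below (Corollary~\ref{cor3.13}); here $A^*$ is the quasiclosed set with $1_{A^*}=(1_A)^*$, a quasi u.s.c.\ envelope of $1_A$. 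Applying Theorem~\ref{thm3.17} to $f=1_A$ (legitimate since $c^*(A)<+\infty$) yields $c^*(A)=c^*(1_A)=c(1_{A^*})=c(A^*)$. As $A^*$ is quasiclosed with $c^*(A^*)=c^*(A)<+\infty$, Lemma~\ref{lemma3.8}\,(b) shows that $A^*$ is quasicompact, so $1_{A^*}\in\cal H^*_0$.

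Next I would apply Theorem~\ref{thm4.6} together with Remark~\ref{remark4.7} to the quasiclosed set $A^*$. Since $G$ satisfies the maximum principle, Remark~\ref{remark4.7} produces the equilibrium measure $\mu_{A^*}\in\cal E^+(A^*)$, which I rename $\mu_A^*$; it is carried by $A^*$ and is characterized within $\cal E^+(A^*)$ by the single property $G\mu_A^*=1$ q.e.\ on $A^*$, which is (a$^*$). Because $A^*$ quasicontains $A$, i.e.\ $c^*(A\setminus A^*)=0$, equality q.e.\ on $A^*$ forces equality q.e.\ on $A$, giving the parenthetical clause of (a$^*$). Property (b$^*$) is exactly the inequality $G\mu_A^*\le1$ everywhere furnished by Remark~\ref{remark4.7}. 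For (c$^*$), property (c) of Theorem~\ref{thm4.6} gives $\mu_A^*(A^*)=\|\mu_A^*\|^2=c(A^*)^2$; since $\mu_A^*$ is carried by $A^*$ we have $\mu_A^*(X)=\mu_A^*(A^*)$, and $c(A^*)=c^*(A)$ from the first step then yields (c$^*$).

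It remains to deduce (d$^*$). Property (d) of Theorem~\ref{thm4.6}, applied to $A^*$, reads
$$
c(A^*)=\min\,\bigl\{\|\lambda\|:\ \lambda\in\cal E^+,\ G\lambda\ge1\text{\ q.e.\ on\ }A^*\bigr\},
$$
so, using $c(A^*)=c^*(A)$, I only need to see that the constraint set is unchanged when ``q.e.\ on $A^*$'' is relaxed to ``q.e.\ on $A$''. This is the one point requiring a short argument, exactly as at the close of the proof of Theorem~\ref{cor4.8}. For any $\lambda\in\cal E^+$ the potential $G\lambda$ is of class $\cal H^*_0$ by consistency (Definition~\ref{def3.5}), hence quasi u.s.c.; as the constant $1$ is l.s.c., the set $\{G\lambda\ge1\}$ is quasiclosed. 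If this quasiclosed set quasicontains $A$, then by the minimality built into the definition of the quasiclosure it also quasicontains $A^*$, so $G\lambda\ge1$ q.e.\ on $A$ already implies $G\lambda\ge1$ q.e.\ on $A^*$. The two constraint sets therefore coincide, and (d$^*$) follows. The only genuinely delicate step is this last equality of constraint sets; everything else is a direct transcription of Theorems~\ref{thm4.6} and~\ref{cor4.8}.
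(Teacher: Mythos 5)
Your proposal is correct and follows essentially the same route as the paper, which obtains Corollary~\ref{cor4.11} precisely by transcribing Theorem~\ref{cor4.8} with $G\omega$ replaced by the constant $1$ and the $\omega$-domination principle by the maximum principle, resting on Theorem~\ref{thm4.6} with Remark~\ref{remark4.7}, on $c^*(A)=c(A^*)$ via the quasi u.s.c.\ envelope $1_{A^*}=(1_A)^*$ (Theorem~\ref{thm3.17}, Lemma~\ref{lemma3.8}\,(b)), and on the same constraint-set argument for (d$^*$) --- that $\{G\lambda\ge1\}$ is quasiclosed since $G\lambda\in\cal H^*_0$ is quasi u.s.c., hence quasicontains $A^*$ whenever it quasicontains $A$. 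You correctly identify that last step as the only point needing argument; no gaps.
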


We omit the quite parallel study of the inner balayage and the inner equilibrium for an arbitrary set $A\subset X$. The following theorem will not be used in the present study.

\begin{theorem}\label{thm4.16} Suppose that $X$ has a countable base and that $G$ is perfect. For any quasiclosed set $A\subset X$ the convex set $\cal E^+_\alpha(A)$ is strongly closed in $\cal E_\alpha$.
\end{theorem}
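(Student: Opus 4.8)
The plan is to obtain the assertion as a short consequence of two facts already in hand: the strong completeness of the positive cone $\cal E^+$ (a feature of perfect kernels) and the existence of the nearest-point projection onto $\cal E^+(A)$ furnished by the pseudobalayage of Theorem~\ref{thm4.4}. Since $\cal E^+_\alpha(A)$ sits inside $\cal E^+(A)$ and is cut out by the constraint defining $\cal E_\alpha$, it suffices to prove that $\cal E^+(A)$ is strongly closed in $\cal E$; the relative closedness of $\cal E^+_\alpha(A)$ in $\cal E_\alpha$ then follows by taking traces. As the strong topology is induced by the energy norm, closedness may be tested along sequences, so I would fix a sequence $(\mu_n)$ in $\cal E^+(A)$ with $\|\mu_n-\mu\|\to0$ for some $\mu\in\cal E$ and show that $\mu\in\cal E^+(A)$.

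First I would check that $\mu$ is a positive measure of finite energy. Because $G$ is perfect, $\|\cdot\|$ is a genuine norm on $\cal E$ and $\cal E^+$ is strongly complete \cite[Section~3.3]{Fu1}. Hence the strong Cauchy sequence $(\mu_n)\subset\cal E^+$ has a strong limit $\mu'\in\cal E^+$, and uniqueness of limits in the normed space $\cal E$ gives $\mu=\mu'\in\cal E^+$; that is, $\cal E^+$ is strongly closed in $\cal E$. Next I would show that $\mu$ is carried by $A$. Since $\mu\in\cal E^+$ and $G$ is consistent, $G\mu\in\cal H^*_0$ with $c(G\mu)=\|\mu\|$ (Definition~\ref{def3.4}(iii)), so $c^*(1_AG\mu)\le c^*(G\mu)=\|\mu\|<+\infty$. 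Thus Theorem~\ref{thm4.4} applies to $\omega:=\mu$ and yields a pseudobalayage $\mu^A\in\cal E^+(A)$ which, by the nearest-point identity (\ref{4.1}), minimizes $\|\mu-\nu\|$ over $\nu\in\cal E^+(A)$. As each $\mu_n$ is a competitor, $\|\mu-\mu^A\|\le\|\mu-\mu_n\|\to0$, and since $\|\cdot\|$ is a norm this forces $\mu=\mu^A\in\cal E^+(A)$, completing the argument.

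The hard part is conceptual rather than technical: $A$ is only quasiclosed, so the carrying condition is not preserved by mere vague limits, as it would be for a closed set where $\{\nu:\supp\nu\subset A\}$ is vaguely closed. This is precisely what the pseudobalayage circumvents, for the whole difficulty of the quasiclosed case is already absorbed into Lemma~\ref{lemma4.0} and Theorem~\ref{thm4.4}, which guarantee that the projection of $\mu$ onto $\cal E^+(A)$ exists once $c^*(1_AG\mu)<+\infty$; given that projection, the strong closedness is immediate. A self-contained alternative for the second step would be to prove directly that $\nu\mapsto\nu(\complement A)$ is strongly lower semicontinuous, using that $1_{\complement A}\in\cal G^*$ for the quasiopen set $\complement A$, but the projection route is shorter and avoids any fresh semicontinuity estimate.
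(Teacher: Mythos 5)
Your proof is correct, but it takes a genuinely different route from the paper's. The paper's argument is three lines: a strongly convergent sequence in $\cal E^+_\alpha(A)$ also converges \emph{vaguely} (for a perfect kernel the strong topology on $\cal E^+$ is finer than the vague one, cf.\ \cite[Definition 3.3]{Fu1}), the energy bound passes to the limit by vague lower semicontinuity of the energy, and then the whole quasiclosedness issue is discharged by citing \cite[Corollary 6.2]{Fu4}: a vague limit of measures of bounded energy carried by a quasiclosed set is again carried by that set. You avoid the vague topology entirely: strong completeness of $\cal E^+$ (perfectness, \cite[Section 3.3]{Fu1}) puts the limit $\mu$ in $\cal E^+$; the bound $c^*(1_AG\mu)\le c(G\mu)=\|\mu\|<+\infty$ (Definition \ref{def3.4}(iii)) legitimizes Theorem \ref{thm4.4} for $\omega:=\mu$; and identity (\ref{4.1}) makes the pseudobalayage $\mu^A$ the nearest-point projection of $\mu$ onto $\cal E^+(A)$, so $\|\mu-\mu^A\|\le\|\mu-\mu_n\|\to0$ forces $\mu=\mu^A\in\cal E^+(A)$ by the energy principle. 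There is no circularity here, since Theorem \ref{thm4.4} rests on the Gauss variational method (Theorems \ref{thm2.1} and \ref{thm3.9}), i.e.\ on vague compactness of $\cal E^+_1$, not on any strong closedness of $\cal E^+(A)$. What each approach buys: the paper's is shorter but outsources the crucial step to \cite[Corollary 6.2]{Fu4}, an external result about vague limits and quasiclosed carriers; yours stays internal to the machinery developed in the paper, needs only sequential (norm-metric) arguments, and makes transparent that strong closedness of $\cal E^+(A)$ is an immediate corollary of the existence of the projection. One shared blemish, not chargeable to you: the decorations $\cal E_\alpha$, $\cal E^+_\alpha(A)$ are never defined in the paper, and the paper's own proof treats them exactly as your trace remark does, namely as the intersection of $\cal E^+(A)$ with the constraint defining $\cal E_\alpha$; your reduction is therefore at least as explicit as the original on this point.
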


\begin{proof} Consider a sequence $(\mu_j)\subset\cal E^+_\alpha(A)$ which converges strongly and hence vaguely to some measure $\mu\in\cal E^+_\alpha$, cf.\ \cite[Definition 3.3]{Fu1}. The sequence $(\mu_j)$ is bounded, say $\|\mu_j\|\le a$ for some constant $a$, and $\|\mu\|\le\liminf_j\|\mu_j\|\le a<+\infty$, by vague convergence. Furthermore, $\mu$ is carried by the quasiclosed set $A$ according to \cite[Corollary 6.2]{Fu4}, so indeed $\mu\in\cal E^+(A)$.
\end{proof}

\subsection{Quasi topology and fine topology on $X$ with respect to $G$.}
Throughout this subsection we assume that $X$ has a countable base and that $G$ is consistent and positive definite. We have on $X$ the Cartan \textit{fine topology} with respect to the kernel $G$, that is, the coarsest topology for which every potential $G\mu$, $\mu\in\cal E^+$, is continuous.
We proceed to obtain two results which together express the equivalence of ``quasitopological'' properties and corresponding properties relative to the fine topology on $X$. The second and deeper result is based on the following \textit{lower envelope principle}.

\begin{theorem}\label{lower} Suppose that $G$ satisfies the domination principle. For any family $(\mu_\alpha)\subset\cal E^+$ there then exists a measure $\mu\in\cal E^+$ such that
    $$
    G\mu\le\widehat{\inf_\alpha}\,G\mu_\alpha\text{ \;q.e.\qquad and}\qquad\widehat{\inf_\alpha}\,G\mu_\alpha\le\inf_\alpha\,G\mu_\alpha\le G\mu.
    $$
    In particular, $\inf_\alpha\,G\mu_\alpha$ is quasi u.s.c.\ and equal q.e.\ to $\widehat{\inf}_\alpha\,G\mu_\alpha$.
    \end{theorem}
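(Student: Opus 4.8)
The plan is to produce a single measure $\mu\in\cal E^+$ whose potential is squeezed between the two sides, by applying the capacitary machinery to the lower envelope and then invoking the domination principle to pass from a $\mu$-a.e.\ to an everywhere comparison. Write $f:=\inf_\alpha G\mu_\alpha\in\cal F^+$. Fixing one index $\alpha_0$ gives $f\le G\mu_{\alpha_0}$, so $\mu_{\alpha_0}\in\Gamma^*(f)$ and hence $c^*(f)=\gamma^*(f)\le\|\mu_{\alpha_0}\|<+\infty$ by Corollary~\ref{cor3.13}. Consequently $f$ admits a quasi u.s.c.\ envelope $f^*\in\cal H_0^*$ (paragraph preceding Definition~\ref{def3.16}), and I take a capacitary measure $\mu\in{\rm M}(f^*)$, which is nonvoid by Theorem~\ref{thm3.9}. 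For this $\mu$, Theorem~\ref{thm3.9} supplies
$$
G\mu\ge f^*\ \text{q.e.}\qquad\text{and}\qquad G\mu=f^*\ \text{$\mu$-a.e.}
$$
Since $f^*\ge f=\inf_\alpha G\mu_\alpha$ q.e., the first relation already yields the lower bound $\inf_\alpha G\mu_\alpha\le G\mu$ q.e.

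Next I would set up the comparison that feeds the domination principle. Each $G\mu_\alpha$ is of class $\cal H_0^*$ by consistency, hence quasi u.s.c., and satisfies $G\mu_\alpha\ge f$ everywhere; thus $G\mu_\alpha\in\Phi^*(f)$. Since $f^*$ is majorized q.e.\ by every element of $\Phi^*(f)$ (paragraph preceding Definition~\ref{def3.16}), this forces $f^*\le G\mu_\alpha$ q.e., for every $\alpha$. Because $\mu\in\cal E^+$ does not charge sets of zero outer capacity, combining this with $G\mu=f^*$ $\mu$-a.e.\ gives $G\mu\le G\mu_\alpha$ $\mu$-a.e. Here the hypothesis enters decisively: applying the domination principle (Definition~\ref{def4.5}) with $\omega=\mu_\alpha\in\cal E^+$ promotes this to $G\mu\le G\mu_\alpha$ \emph{everywhere} on $X$, and taking the infimum over $\alpha$ yields $G\mu\le\inf_\alpha G\mu_\alpha$ everywhere. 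As $G\mu$ is l.s.c.\ and lies below $\inf_\alpha G\mu_\alpha$, it lies below the l.s.c.\ regularization $\widehat{\inf_\alpha}\,G\mu_\alpha$ everywhere, in particular q.e.

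It then remains to assemble the conclusions. The bound $G\mu\le\widehat{\inf_\alpha}G\mu_\alpha\le\inf_\alpha G\mu_\alpha$ together with $\inf_\alpha G\mu_\alpha\le G\mu$ q.e.\ squeezes everything to
$$
G\mu=\inf_\alpha G\mu_\alpha=\widehat{\inf_\alpha}\,G\mu_\alpha\quad\text{q.e.}
$$
Since $G\mu\in\cal H_0^*$ is quasi u.s.c.\ and agrees q.e.\ with $\inf_\alpha G\mu_\alpha$, the latter is itself quasi u.s.c., and the displayed equalities give the final ``in particular'' assertion.

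I expect the main obstacle to be the two ingredients underlying the passage to an \emph{everywhere} inequality. First, the quasiminimality of $f^*$ in $\Phi^*(f)$ is what makes the competitors $G\mu_\alpha$ usable to bound $f^*$ (hence $G\mu$) from above $\mu$-a.e.; without the identification $G\mu=f^*$ $\mu$-a.e.\ paired with $f^*\le G\mu_\alpha$ q.e., there would be no $\mu$-a.e.\ comparison to start from. Second, the domination principle, whose sole role here is to convert the $\mu$-a.e.\ inequality $G\mu\le G\mu_\alpha$ into the pointwise one; absent this hypothesis one would retain only the q.e.\ identity, not the clean everywhere comparison that drives the regularization statement.
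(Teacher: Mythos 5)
Your proof is correct, but it takes a genuinely different route from the paper's. The paper proceeds in two stages: it first treats a countable family $(\mu_j)$, observing that $f:=\inf_j G\mu_j$ is quasi u.s.c.\ (a countable infimum of functions of class $\cal H^*_0$) and hence itself of class $\cal H^*_0$, so that a capacitary measure $\mu\in\mathrm M(f)$ for $f$ \emph{itself} can be taken and the domination principle applied to $G\mu=f\le G\mu_j$ $\mu$-a.e.; it then reduces an arbitrary family to this case by Choquet's topological lemma \cite[p.\ 169]{CC}, extracting a countable subfamily with the same l.s.c.\ regularized infimum. You instead handle the arbitrary family in one pass: since an uncountable infimum need not be quasi u.s.c., you replace $f=\inf_\alpha G\mu_\alpha$ by its quasi u.s.c.\ envelope $f^*\in\cal H^*_0$ and take $\mu\in\mathrm M(f^*)$ — in the paper's language, an upper capacitary measure for $f$ (Definition \ref{def3.16}, Theorem \ref{thm3.17}) — with the quasiminimality of $f^*$ against the competitors $G\mu_\alpha\in\Phi^*(f)$ doing the work that the countable extraction does in the paper. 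Both routes lean on second countability of $X$ (a standing hypothesis of this subsection): in the paper it enters through Choquet's lemma, in yours through the existence of the envelope via \cite[Theorem 3.4]{Fu3}, so neither is more general; yours is arguably more conceptual (it identifies $\mu$ as the upper capacitary measure of the full infimum), while the paper's keeps the countable case elementary and self-contained. One small point of agreement worth noting: like the paper's proof, your argument actually yields $G\mu\le\widehat{\inf_\alpha}\,G\mu_\alpha$ \emph{everywhere} (via lower semicontinuity of $G\mu$) and the opposite bound $\inf_\alpha G\mu_\alpha\le G\mu$ only q.e., which is the correct reading of the somewhat ambiguously punctuated display in the statement. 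All the individual steps check out: $\mu_{\alpha_0}\in\Gamma^*(f)$ gives $c^*(f)=\gamma^*(f)<+\infty$ by Corollary \ref{cor3.13}, measures of finite energy do not charge sets of zero outer capacity (by (\ref{2.3})), so $f^*\le G\mu_\alpha$ q.e.\ combined with $G\mu=f^*$ $\mu$-a.e.\ legitimately produces the $\mu$-a.e.\ hypothesis of the domination principle for each $\omega=\mu_\alpha\in\cal E^+$, and the final transfer of quasi upper semicontinuity from $G\mu$ to the q.e.-equal function $\inf_\alpha G\mu_\alpha$ is valid since a $c^*$-null set can be enclosed in an open set of arbitrarily small capacity.
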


Here and in what follows $\widehat{f}$ stands for the greatest l.s.c.\ minorant of $f\in\cal F^+$.

\begin{proof} Suppose first that the family is a sequence $(\mu_j)$. The function $f:=\inf_jG\mu_j$ is quasi u.s.c.\ along with each $G\mu_j\in\cal H^*_0$, by consistency. Hence, $f\in\cal H^*_0$, cf.\ \cite[Theorem 2.5]{Fu4}. Let $\mu\in\mathrm{M(f)}$ be a capacitary measure for $f$, cf.\ Theorem \ref{thm2.1}. Then $\mu\in\cal E^+$, and by (b) in that theorem $G\mu=f$ $\mu$-a.e., that is $G\mu\le G\mu_j$ $\mu$-a.e.\ for every $j$. By the domination principle, $G\mu\le G\mu_j$ everywhere for every $j$, that is, $G\mu\le f$ and actually $G\mu\le\widehat f$ since $G\mu$ is l.s.c. On the other hand, by (a), $G\mu\ge f\ge\widehat{f}$ q.e. By a lemma of Choquet, see e.g.\ \cite[p.\ 169]{CC}, an arbitrary family $(\mu_\alpha)\subset\cal E^+$ has a countable subfamily $(\mu_{\alpha_j})$ such that $\widehat{\inf}_\alpha\,G\mu_\alpha=\widehat{\inf}_{\alpha_j}\,G\mu_{\alpha_j}$. This reduces the former assertion of the theorem to the above case of a countable family. In particular, $\inf_{\alpha_j}G\mu_{\alpha_j}=G\mu$ q.e., and since $G\mu$ is quasicontinuous, being of class $\cal H^*_0$ by consistency, so is therefore $\inf_{\alpha_j}G\mu_{\alpha_j}$.
\end{proof}

\begin{remark}\label{fundamental} In Newtonian potential theory there is equality in the first inequality $G\mu\le\widehat{\inf_\alpha}\,G\mu_\alpha$ of the above theorem, which therefore becomes the fundamental convergence theorem.
\end{remark}

\begin{remark}\label{kishi}For the case where $X$ is compact and $G>0$ is finite off the diagonal, symmetric, positive definite, and continuous, it was shown by Kishi \cite[Theorems 3.1 and 3.2]{K} that $G$ satisfies the domination principle if and only if $G$ satisfies the (strong) lower envelope principle, that is, for any two measures $\mu\in\cal E^+$ and $\nu\in\cal M^+$ there exists $\lambda\in\cal M^+$ such that $G\lambda=\min\{G\mu,G\nu\}$ n.e. on $X$. (Consistency of $G$ is not required.)
\end{remark}

Let $Y$ denote a locally compact space with a countable base.

\begin{theorem}\label{equiv1} Every quasicontinuous function $f: X\to Y$ is finely continuous q.e. Every quasi u.s.c., resp.\ quasi l.s.c., function $f: X\to[0,+\infty]$ is q.e.\ finely u.s.c., resp.\ finely l.s.c. Every quasiclosed, resp.\ quasiopen, subset of $X$ differs by some set of zero outer capacity from its fine closure, resp.\ from its fine interior.
\end{theorem}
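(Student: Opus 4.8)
The plan is to use the potentials $G\mu$, $\mu\in\cal E^+$, as a bridge between the two topologies: on the one hand each such potential is finely continuous by the very definition of the fine topology, and on the other hand it is quasicontinuous, since $G\mu\in\cal H^*_0$ by consistency (Definition \ref{def3.5}) and functions of class $\cal H^*_0$ are quasicontinuous by \cite[Theorem 2.5]{Fu4} (cf.\ Remark \ref{remark4.0}). I read the fine topology in Cartan's sense, as the coarsest topology \emph{finer than the given one} rendering all $G\mu$ continuous; in particular every (initially) open set is finely open, and each $\{G\mu<a\}$ is finely open while each $\{G\mu\ge a\}$ is finely closed. I shall also use repeatedly that a potential of finite energy is finite q.e.: if $c^*(\{G\mu=+\infty\})>0$ there would be $\nu\in\cal E^+_1$ charging $\{G\mu=+\infty\}$, whence $\int G\mu\,d\nu=\int G\nu\,d\mu=+\infty$, contradicting positive definiteness (Definition \ref{def3.4}). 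Thus $\{G\mu=+\infty\}$ has zero outer capacity and, being the preimage of $\{+\infty\}$ under the finely continuous $G\mu$, is finely closed.

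First I would reduce all three assertions to a single geometric fact about null sets. By definition of quasicontinuity there are open sets $V_n$ with $c^*(V_n)\to0$ such that $f\bigl|_{X\setminus V_n}$ is continuous (resp.\ u.s.c., resp.\ l.s.c.); replacing $V_n$ by $V_1\cap\cdots\cap V_n$ (a finite intersection of open sets, on whose closed complement $f$ is still continuous by the pasting lemma) and passing to a subsequence, I may assume $(V_n)$ decreasing with $\sum_n c^*(V_n)<+\infty$, so that $N:=\bigcap_nV_n$ has $c^*(N)=0$. For each $n$, since $c^*(V_n)=\gamma^*(1_{V_n})<+\infty$ (Corollary \ref{cor3.13}), Theorem \ref{thm3.7} yields $\lambda_n\in\cal E^+$ with $p_n:=G\lambda_n\ge1$ q.e.\ on $V_n$ and $\|\lambda_n\|=c^*(V_n)$. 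Setting $\nu:=\sum_n\lambda_n\in\cal E^+$ (a vague cluster point of the vaguely bounded partial sums, Remark \ref{remark2.0}), the convergence theorem (Theorem \ref{thm3.6}) gives $G\nu\ge\sum_n p_n$ q.e.; hence $p_n(x)\to0$ at q.e.\ $x$. The finely open set $\{p_n<1\}$ meets $V_n$ only in the set $E_n:=V_n\cap\{p_n<1\}$, which is of zero outer capacity.

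The crux is the following \textbf{Key Lemma}, which I expect to be the main obstacle: \emph{if $c^*(P)=0$, then its fine closure $\overline{P}^{\,f}$ also satisfies $c^*(\overline{P}^{\,f})=0$.} The idea is to manufacture an Evans-type potential equal to $+\infty$ on $P$. Writing $c^*$ as an infimum over open supersets, I would choose open $O_j\supset P$ with $c^*(O_j)<2^{-j}$ and, by Theorem \ref{thm3.7}, measures $\lambda_j'$ with $G\lambda_j'\ge1$ q.e.\ on $O_j$ and $\|\lambda_j'\|\to0$; a vague cluster point $\sigma\in\cal E^+$ of the partial sums then satisfies $G\sigma\ge\liminf_k\sum_{j\le k}G\lambda_j'=+\infty$ q.e.\ on $P$ by Theorem \ref{thm3.6}. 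Since $G\sigma$ is finely continuous and finite q.e., $\{G\sigma=+\infty\}$ is a finely closed set of zero outer capacity containing $P$ up to a null set. The delicate point is to absorb this residual null set into a single finely closed null set covering $P$: here one upgrades the inequality ``$G\lambda_j'\ge1$ q.e.\ on the \emph{open} set $O_j$'' to ``everywhere on $O_j$'', the exceptional set being open and of zero capacity, hence finely negligible. This cleanup is exactly what the quasi-topology apparatus of \cite{Fu3} furnishes, given that $c^*$ is sequentially order continuous from below (Corollary \ref{cor3.13}) and that $X$ has a countable base; it is the step I regard as the real content of the theorem.

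Granting the Key Lemma, the three assertions fall out by assembling fine neighbourhoods. Let $N_0$ be the union of $N$, of the null set $\{G\nu=+\infty\}$, and of the fine closures $\overline{E_n}^{\,f}$ (each null by the Key Lemma, since $c^*(E_n)=0$); then $c^*(N_0)=0$. Fix $x\notin N_0$. As $x\notin N$ and the $V_n$ decrease, $x\in X\setminus V_m$ for all large $m$, and since $G\nu(x)<+\infty$ we have $p_m(x)\to0$; choose $m$ large with $p_m(x)<1$. Because $x\notin\overline{E_m}^{\,f}$, the set $U:=\{p_m<1\}\cap\bigl(X\setminus\overline{E_m}^{\,f}\bigr)$ is a fine neighbourhood of $x$, and $U\cap V_m\subset E_m\setminus\overline{E_m}^{\,f}=\varnothing$, so $U\subset X\setminus V_m$. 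For the first assertion, continuity of $f\bigl|_{X\setminus V_m}$ at $x$ gives, for every neighbourhood $W$ of $f(x)$, an open $O\ni x$ with $f\bigl((X\setminus V_m)\cap O\bigr)\subset W$; then $U\cap O$ is a fine neighbourhood of $x$ mapped into $W$, proving fine continuity at $x$. Assertion (II) is identical, the one-sided estimate $f<f(x)+\eps$ (resp.\ $f>f(x)-\eps$) from quasi upper (resp.\ lower) semicontinuity yielding fine upper (resp.\ lower) semicontinuity at $x$. Finally (III) is (II) applied to $f=1_A$: for $A$ quasiclosed, $1_A$ is quasi u.s.c.\ (as recorded before Lemma \ref{lemma3.8}), hence finely u.s.c.\ q.e., which says precisely that quasi-every point of $X\setminus A$ has a fine neighbourhood missing $A$; thus $\overline{A}^{\,f}\setminus A$ has zero outer capacity, and $A$ differs from its fine closure by a null set. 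The quasiopen case follows by passing to complements.
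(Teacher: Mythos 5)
Your reduction of the three statements to the Key Lemma, and the subsequent assembly via the potentials $p_n$ and the fine neighbourhoods $\{p_m<1\}\setminus\overline{E_m}^{\,f}$, is sound and is in substance a re-proof of the result of Brelot that the paper actually invokes: the paper's entire proof consists of citing \cite[Theorem IV, 3]{Br} and verifying its hypothesis --- fine stability of the outer capacity, $c^*(\widetilde A)=c^*(A)$ --- in one line, from $c^*=\gamma^*$ (Corollary \ref{cor3.13}) together with the identity $\Gamma^*(1_A)=\Gamma^*(1_{\widetilde A})$, which rests on the fine continuity of every potential $G\lambda$, $\lambda\in\cal E^+$. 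Your Key Lemma ($c^*(P)=0$ implies $c^*(\overline{P}^{\,f})=0$) is a special case of that stability property, so you have correctly located the crux. (A minor side issue: your argument that $\{G\mu=+\infty\}$ is null does not work as stated, since the inequality in (\ref{2.3}) goes the wrong way to produce a measure $\nu\in\cal E^+_1$ charging a set of positive \emph{outer} capacity; argue instead that $1_{\{G\mu=+\infty\}}\le t^{-1}G\mu$ for every $t>0$, whence $c^*(\{G\mu=+\infty\})\le t^{-1}c(G\mu)=t^{-1}\|\mu\|\to0$ by Definition \ref{def3.4} (iii).)

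The genuine gap is in your proof of the Key Lemma, at precisely the point you flag. Theorem \ref{thm3.7} delivers $G\lambda_j'\ge1$ only \emph{q.e.} on the open set $O_j$, and nothing among the results you cite permits upgrading this to an inequality holding everywhere on $O_j$: in classical potential theory that upgrade rests on the maximum or domination principle (or on the fact that the regularized reduced function of an open set equals $1$ on it), and the present paper deliberately assumes no such principle in Theorem \ref{equiv1} --- the domination principle enters only in the converse, Theorem \ref{equiv2}, through the lower envelope principle (Theorem \ref{lower}). Your stated justification fails on both counts: the exceptional set $O_j\cap\{G\lambda_j'<1\}$ is \emph{not} open, since $G\lambda_j'$ is merely l.s.c., so that $\{G\lambda_j'<1\}$ is in general only an $F_\sigma$; and ``hence finely negligible'' is circular, because the assertion that null sets may be discarded when forming fine closures is exactly the content of the Key Lemma. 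The appeal to ``the quasi-topology apparatus of \cite{Fu3}'' does not rescue this: that apparatus, like \cite[Theorem IV, 3]{Br}, takes fine stability of the capacity as a hypothesis to be verified, which is what is at stake here. As written, the residual null set on which $G\sigma$ may fail to be infinite is never absorbed, and iterating the construction produces an infinite regress of residual null sets that does not terminate. To close the argument, do what the paper does: establish fine stability directly from the dual description --- $c^*(A)=\gamma^*(1_A)$ depends only on $\Gamma^*(1_A)$, and $\Gamma^*(1_A)=\Gamma^*(1_{\widetilde A})$ by fine continuity of the potentials $G\lambda$ --- and then quote \cite[Theorem IV, 3]{Br} wholesale; alternatively, under a (dilated) domination principle your Evans-potential route can be carried through, cf.\ Remark \ref{riesz} and \cite[Sections 5.5--5.7]{Fu3}.
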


\begin{proof} This follows from \cite[Theorem IV, 3]{Br} because $c^*$ is finely stable in the sense that $c^*(\widetilde A)=c^*(A)$ for any $A\subset X$ (Corollary \ref{cor3.13}), $\widetilde A$ denoting the fine closure of $A$. In fact, $\gamma^*(A)$ depends only on $\Gamma^*(1_A)=\Gamma^*(1_{\widetilde A})$ from Section 3.1, by fine continuity of $G\lambda$ for every $\lambda\in\cal E^+$.
\end{proof}

The above theorem has the following converse which Brelot called the Choquet property in view of \cite{Ch5} (for classical potential theory).

\begin{theorem}\label{equiv2} Every finely continuous function $f: X\to Y$ is quasicontinuous. Every finely u.s.c., resp.\ finely l.s.c., function is quasi u.s.c., resp. quasi l.s.c. Every finely closed, resp.\ finely open, subset of $X$ is
quasiclosed, resp.\ quasiopen.
\end{theorem}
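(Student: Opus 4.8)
The plan is to reduce all three assertions to the single set-theoretic statement that \emph{every finely open set is quasiopen} (equivalently, by passing to complements, that every finely closed set is quasiclosed); the assertions about functions then follow by a routine countable-bundling argument indicated at the end.

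First I would determine a subbasis for the fine topology. By its definition as the coarsest topology making every potential $G\lambda$, $\lambda\in\cal E^+$, continuous, the fine topology is generated by the sets $\{G\lambda>a\}$ and $\{G\lambda<a\}$, $\lambda\in\cal E^+$, $a\in\RR$ (together with the original open sets, if one takes the fine topology to refine the initial one). Each such set is quasiopen. Indeed $\{G\lambda>a\}$ is already open, hence quasiopen; and for $\{G\lambda<a\}$ note that $G\lambda\in\cal H^*_0$ by consistency (Definition \ref{def3.5}), so $G\lambda$ is quasi u.s.c.\ by \cite[Theorem 2.5 (b)]{Fu4}, and being also l.s.c.\ it is quasicontinuous. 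Thus off an open set of arbitrarily small outer capacity the restriction of $G\lambda$ is continuous, whence $\{G\lambda<a\}$ is relatively open there and $1_{\{G\lambda<a\}}$ is quasi l.s.c.

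Next I would record that the class of quasiopen sets depends only on the $c^*$-equivalence class of its members and is stable under finite intersection (a minimum of finitely many quasi l.s.c.\ functions is quasi l.s.c.) and, by the countable subadditivity of $c^*$, under countable union (a supremum of countably many quasi l.s.c.\ functions is l.s.c.\ off the union of the associated small exceptional sets). A finely open set $U$ is a union of basic finely open sets, each a finite intersection of subbasic sets and hence quasiopen by the previous step. The crucial---and hardest---point is to pass from this possibly uncountable union to a countable one; here I would invoke the \emph{quasi-Lindel\"of property} of the outer capacity $c^*$, which asserts that the union of an arbitrary family of quasiopen sets coincides, up to a set of zero outer capacity, with a countable subunion (\cite{Fu3}). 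This property is available because $X$ has a countable base and $c^*$ is sequentially order continuous from below (Corollary \ref{cor3.13}). It produces a countable subunion that is quasiopen by stability under countable union and is $c^*$-equivalent to $U$; hence $U$ is quasiopen. The entire difficulty of the theorem is thus concentrated in this quasi-Lindel\"of reduction.

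Finally I would derive the statements on functions. If $f:X\to[0,+\infty]$ is finely l.s.c.\ then each set $\{f>a\}$, $a\in\QQ$, is finely open, hence quasiopen by the set statement just proved; enumerating the rationals as $(a_k)$, choosing for each $k$ an open exceptional set $V_k$ with $c^*(V_k)<\eps 2^{-k}$ off which $\{f>a_k\}$ is relatively open, and setting $V=\bigcup_kV_k$ (so $c^*(V)<\eps$), one sees that $f|_{\complement V}$ is l.s.c., so $f$ is quasi l.s.c.; the finely u.s.c.\ case is symmetric, using the sublevel sets $\{f<a\}$, and a finely closed set is quasiclosed by applying the set statement to its complement. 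For a finely continuous $f:X\to Y$, fix a countable base $(W_n)$ of $Y$; each $f^{-1}(W_n)$ is finely open, hence quasiopen, and bundling the corresponding exceptional sets over $n$ as above yields an open $V$ with $c^*(V)<\eps$ off which every $f^{-1}(W_n)$ is relatively open, i.e.\ $f|_{\complement V}$ is continuous. Hence $f$ is quasicontinuous, which completes the proof.
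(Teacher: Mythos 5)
Your reductions at both ends are sound: the subbasic sets $\{G\lambda>a\}$ and $\{G\lambda<a\}$ are indeed quasiopen (the latter because $G\lambda\in\cal H^*_0$ is quasicontinuous), and the passage from the set statement to the function statements by bundling countably many exceptional sets over rational levels, resp.\ over a countable base of $Y$, is routine and correct. The proof fails at exactly the point you single out as carrying the entire difficulty: the quasi-Lindel\"of property, in the form you invoke it --- for \emph{arbitrary} families of quasiopen sets, as a consequence of a countable base plus sequential order continuity of $c^*$ from below --- is simply false, and no such theorem is in \cite{Fu3}. Counterexample: take $X=\RR^3$ with the Newtonian kernel (perfect, and even satisfying the domination principle). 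Every singleton has outer energy capacity zero, hence is quasiopen; the family of all singletons is a family of quasiopen sets whose union is $\RR^3$, yet every countable subunion has outer capacity zero by countable subadditivity, so no countable subfamily exhausts the union up to a $c^*$-null set. What is true is a quasi-Lindel\"of property for \emph{finely open} sets, but that is not a capacitary fact: its proof requires genuinely potential-theoretic input, namely that the pointwise infimum of an arbitrary family of potentials is quasi u.s.c.\ and equal q.e.\ to its l.s.c.\ regularization (via Choquet's topological lemma to extract a countable subfamily realizing $\widehat{\inf}_\alpha G\mu_\alpha$, and then the q.e.\ identification of $\inf$ with $\widehat{\inf}$).

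That missing input is precisely the lower envelope principle, Theorem \ref{lower}, which requires the domination principle --- a hypothesis your argument never uses, and this is the telltale sign of the gap: Theorem \ref{equiv2} is not expected to hold for consistent positive definite kernels in general (cf.\ Remark \ref{riesz}, where for the M.\ Riesz kernels of order $2<\alpha<n$ one must fall back on the dilated domination principle and a dilated form of Theorem \ref{lower} to obtain Theorem \ref{equiv2}). The paper's own proof is one line: in view of Theorem \ref{lower}, the conclusion follows from Brelot's criterion \cite[Theorem IV.7]{Br} (the ``Choquet property''); that is, Theorem \ref{lower} verifies the hypothesis under which Brelot's general machinery converts fine-topological properties into quasitopological ones. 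To repair your proof you would have to establish the quasi-Lindel\"of reduction for the basic finely open sets built from your subbasis, and the known route to that is Choquet's lemma plus Theorem \ref{lower} --- i.e., essentially the argument the paper delegates to \cite{Br}.
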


\begin{proof} In view of Theorem \ref{lower} this follows from \cite[Theorem IV.7]{Br}.
\end{proof}

\begin{remark}\label{regular} In the setting of Theorem \ref{cor4.8} the outer balayage measure $\omega^{*A}$ is carried by the fine closure $\widetilde A$ of $A$, and even by the base of $\widetilde A$ (the set of points of $\widetilde A$ at which $A$ is not thin) because the remaining points of $\widetilde A$ form a set of zero outer capacity, and thus a null set for $\omega^{*A}$.
\end{remark}

\begin{remark}\label{riesz} By way of example, Theorems \ref{lower}, \ref{equiv1}, and \ref{equiv2} hold for the M. Riesz kernels $|x-y|^{\alpha-n}$ on $R^n$ of order $0<\alpha\le2$. As shown in \cite[Sections 5.5--5.7]{Fu3}, the dilated domination principle implies a dilated form of Theorem \ref{lower} which suffices for the purpose of establishing Theorem \ref{equiv2} (and \ref{equiv1}) above. That covers the case of the M. Riesz kernels of any order $0<\alpha<n$.
\end{remark}

\subsection{Epilogue.}  If we compare the results on balayage (and equilibrium) with respect to the kernel $G$ obtained in the present article with corresponding results by Cartan in \cite{Ca2} for the Newtonian kernel (or the M.~Riesz kernels), the main difference is our use of quasitopological concepts, whereby the theory of balayage even on quasiclosed sets, or outer balayage on more general sets, is obtained by the Gauss variational method. Our results only cover (outer) balayage of $\omega$ on sets $A$ such that $c^*(1_AG\omega)<+\infty$, whereas more general sets $A$ occur in \cite{Ca2}. However, the case $c^*(1_AG\omega)=+\infty$ in \cite{Ca2} uses balayage of superharmonic functions which are not necessarily potentials, and in the present setting that would require severe restrictions on the kernel $G$, beyond the requirements imposed on $G$ for our Theorems \ref{thm4.5} on balayage and \ref{cor4.8} on outer balayage (and similarly for equilibrium and outer equilibrium).

\end{document}